\newcommand{\intR}{\int_{-\infty}^{\infty}}
\allowdisplaybreaks \numberwithin{equation}{section}
\theoremstyle{plain}
\newtheorem{thm}{Theorem}[section]
\newtheorem{cor}[thm]{Corollary}
\newtheorem{lem}[thm]{Lemma}
\newtheorem{defn}[thm]{Definition}
\def\intR{\int_{-\infty}^\infty}
\renewcommand\Re{\operatorname{\mathfrak{Re}}}
\renewcommand\Im{\operatorname{\mathfrak{Im}}}
\DeclareMathOperator{\Res}{Res}
\title{\bf A uniqueness result for 2-soliton solutions of the KdV equation}
\author[1]{John P. Albert}
\author[2]{Nghiem V. Nguyen}
\affil[1]{Department of Mathematics, University of Oklahoma, Norman OK 73019, \texttt{jalbert@ou.edu}}
\affil[2]{ Department of Mathematics and Statistics, Utah State University, Logan UT 84322, \texttt{nghiem.nguyen@usu.edu}}
\begin{document}
\maketitle

\begin{abstract}
Multisoliton solutions of the KdV equation
satisfy nonlinear ordinary differential equations which are known as
stationary equations for the KdV hierarchy, or sometimes as Lax-Novikov equations.  An interesting feature of these equations,
known since the 1970's, is that they can be explicitly integrated, by virtue of being finite-dimensional
completely integrable Hamiltonian systems. Here we use the integration theory to investigate the question
of whether the multisoliton solutions are the only nonsingular solutions of these ordinary differential equations
which vanish at infinity.  In particular we prove that this is indeed the case for $2$-soliton solutions of the
fourth-order stationary equation.
\end{abstract}

\section{Introduction}
\label{sec:intro}
\renewcommand{\theequation}{\arabic{section}.\arabic{equation}}
\setcounter{section}{1} \setcounter{equation}{0}

\bigskip

The Korteweg-de Vries (or KdV) equation,
\begin{equation}
u_t=\frac14(u_{xxx}+6uu_x),
\label{KdV}
\end{equation}
was first derived in the
1800's as a model for long, weakly nonlinear one-dimensional water
waves (\cite{KdV}, see also equation (283 {\it bis}) on p.\ 360 of \cite{Bou}).  It was not
until the 1960's, however, that the striking discovery was made
that the equation has particle-like solutions known as solitons,
whose interactions with each other are described by explicit
multisoliton solutions \cite{GGKM, H}.

It is well-known that the profiles of multisoliton solutions, which are smooth functions that
vanish rapidly at infinity, are critical points for
variational problems associated with conserved functionals of KdV (see, e.g., \cite{MS}).
By virtue of this property, the profiles are solutions of Lagrange
multiplier equations, which take the form of nonlinear ordinary
differential equations, sometimes known as Lax-Novikov equations, or as the equations
for stationary solutions of a family of time-dependent equations known as the KdV hierarchy (see below
for details).  In this paper we investigate the problem
of establishing a converse to this statement:  is it true that if a
solution to a stationary equation for the KdV hierarchy is, together with enough of its
derivatives, square integrable on the real line, then must it be a
profile of a multisoliton solution?

For the case of the KdV equation itself (the first equation in the hierarchy), it is an elementary exercise to prove that the only stationary solutions in $L^2$ are the well-known solitary-wave solutions.  Here we give a proof that the answer is also affirmative for the case of the fourth-order stationary equation for the second, fifth-order, equation in the
KdV hierarchy  (see Theorem \ref{2solthm} below).  Much of our
proof easily generalizes to the other stationary equations for the
hierarchy, but some work remains to be done to complete the proof in the
general case.

Our proof proceeds by integrating the stationary equations, using the method developed in the
pioneering work of  Dubrovin \cite{Du}, Its and Matveev \cite{IM}, Lax \cite{L}, and Novikov \cite{N} on solutions of the periodic KdV equation.  An early survey of the work of these authors is \cite{DMN}, and more recent treatments are \cite{BEIM} and \cite{GH}.   For a lively historical account of the development of the subject, we refer the reader to \cite{M1}, in which it is noted that elements of the theory, including in particular  equation \eqref{eqR2}, can be traced back at least as far the work of Drach \cite{Dr} in 1919. Here we follow the approach of Gel'fand and Dickey, which first appeared in \cite{GD},
and has received a nice expository treatment in Chapter 12 of Dickey's book \cite{Di}.  In this approach, the stationary equations, which have the structure of completely integrable Hamiltonian systems, are rewritten in action-angle variables, which reduces them to an easily integrable set of equations (see \eqref{sys2} below) first obtained by Dubrovin in \cite{Du}. (We remark that each stationary equation is a finite-dimensional completely integrable Hamiltonian system in the classical sense; unlike the time-dependent KdV equations which are in some sense \cite{FZ, G} infinite-dimensional completely integrable Hamiltonian systems.) Integrating Dubrovin's equations shows that every smooth solution of the stationary equations must be expressible in the form given below in \eqref{itsmat}, which is known as the Its-Matveev formula \cite{IM}.  It turns out that this part of the proof is valid for all stationary equations for the KdV hierarchy.   We then conclude by determining which solutions of the Its-Matveev formula are nonsingular.  The latter step we have so far only completed for the second stationary equation in the hierarchy: that is, for the equation for 2-solitons.

We emphasize that our interest here is not in constructing solutions of the stationary equations; all the solutions appearing in this paper are already well-known (see, for example, \cite{M2}).  Rather, our focus is on showing that
a corollary of the method used to integrate these equations is that the $N$-solution solutions are the {\it only} solutions with finite energy, at least in the case $N=2$. Also, we have made an effort to give a self-contained presentation, which in fact relies entirely on elementary calculations.

The result we prove here has consequences for the stability theory
of KdV multisolitons.  As we show in a forthcoming paper, it can
be used to show that two-soliton solutions of KdV are global
minimizers for the third invariant of the KdV equation, subject to
the constraint that the first two invariants be held constant.
This in turn establishes the stability of two-soliton solutions,
thus providing an alternative proof to that appearing in
\cite{MS}.

We remark that in order to be useful for the stability theory, it is important that our uniqueness result
make no assumption on the values of the parameters $d_i$ appearing in equation \eqref{stathier2}.
This requirement influenced our choice of method of proof.  An alternate method we considered was to 
proceed by an argument which counts the dimensions of the stable and unstable manifolds of \eqref{stathier2}
at the origin in phase space.  Indeed, if one assumes in advance
that $d_3$ and $d_5$ are such that equation \eqref{c1c2} has distinct positive roots, then this method does give,
after some work, that the well-known 2-soliton solutions are the only homoclinic solutions of \eqref{stathier2}. Such an argument, however, becomes much
more complicated for other choices of $d_i$, partly because center manifolds of dimension up to 4 can appear.  For this
reason, we have found it better to proceed by direct integration of the equation instead.

The plan of the paper is as follows. In Sections
\ref{sec:reviewKdv} and \ref{sec:Nsoliton}, for the reader's
convenience and to set notation, we review some of the basic
properties of multisoliton solutions.  In Section
\ref{sec:reviewKdv} we introduce the equations of the KdV
hierarchy, and their associated stationary equations.  (Here
``stationary'' means ``time-independent'': stationary equations
are equations for time-independent solutions of the KdV hierarchy.
Coincidentally, they are also equations for stationary points of
variational problems.)  In Section \ref{sec:Nsoliton} we define
the $N$-soliton solutions of the KdV hierarchy, and give a proof
of the well-known fact that their profiles are actually solutions
of stationary equations. Section \ref{sec:1solconverse} prepares
for the main result by treating the elementary case of stationary
solutions of the KdV equation itself. In Section
\ref{sec:2solconverse} we prove the main result, which is that for
the stationary equation for the fifth-order equation in the KdV
hierarchy, the only $H^2$ solutions are $1$-soliton and
$2$-soliton profiles.  A concluding section discusses the question
of how to generalize the result to higher equations in the hierarchy and
$N$-solitons for $N> 2$.

\section{The KdV hierarchy} \label{sec:reviewKdv}

We review here the definition of the KdV hierarchy, following the
treatment of chapter 1 of \cite{Di}.

Let $\mathcal A$ denote the differential algebra over $\mathbf C$
of formal polynomials in $u$ and the derivatives  of $u$.  That
is, elements of $\mathcal A$ are polynomials with complex
coefficients in the symbols $u$, $u'$, $u''$, etc.; and elements
of $\mathcal A$ can be acted on by a derivation $\xi$, a linear
operator on $\mathcal A$ which obeys the Leibniz product rule, and
takes $u$ to $u'$, $u'$ to $u''$, etc.  We adopt the convention
that primes also denote the action of $\xi$ on any element of
$\mathcal A$. Thus the expressions $a'$ and $\xi a$ are synonymous
for $a \in \mathcal A$. Later we will substitute actual functions
of $x$ for $u$, and then $\xi$ will correspond to the operation of
differentiation with respect to $x$, so that $u'$, $u''$, etc.,
will denote the derivatives of these functions with respect to $x$
in the usual sense.

If $M$ is an integer, we define a pseudo-differential operator of
order $M$ to be a formal sum
\begin{equation}
X=\sum_{i=-\infty}^M a_i\partial^i, \label{psdiff}
\end{equation}
where
$a_i \in \mathcal A$ for each $i$.  Clearly the set $\mathcal P$
of all pseudo-differential operators has a natural module
structure over the ring $\mathcal A$.  We can also make $\mathcal
P$ into an algebra by first defining, for each integer $k$ and
each $a \in \mathcal A$, the product $\partial^k a$ as
\begin{equation*}
\partial^k a = a \partial^k + {k \choose 1} a' \partial^{k-1}+ {k
\choose 2} a'' \partial^{k-2}+ \dots,
\end{equation*}
where
\begin{equation*}
{k \choose i} = \frac{k(k-1)\cdots(k-i+1)}{i!};
\end{equation*}
and then extending this multiplication operation to all of
$\mathcal P$ in the natural way:
\begin{equation*}
\begin{aligned}
\left(\sum_{i=-\infty}^M a_i\partial^i\right)
\left(\sum_{j=-\infty}^N b_j\partial^j\right)&=
\sum_{i=-\infty}^M
\sum_{j=-\infty}^N a_i(\partial^i b_j)\partial^j \\
&=\sum_{i=-\infty}^M
\sum_{j=-\infty}^N \sum_{l=0}^\infty a_i {i \choose l}(\xi^lb_j)\partial^{i+j-l}.
\end{aligned}
\end{equation*}
The last sum in the preceding equation is well-defined in
$\mathcal P$ because each value of $i+j-l$ occurs for only
finitely many values of the indices $i,j,l$.   It can be checked
that, with this definition of multiplication, $\mathcal P$ is an
associative algebra with derivation $\partial$. Interestingly,
this algebra $\mathcal P$ was studied by Schur in \cite{Sch}, many
years before its utility for the theory of integrable systems was
discovered.

In particular we will have $\partial \partial^{-1}= 1$ in
$\mathcal P$.  More generally, suppose $X$ is given by
\eqref{psdiff} with $a_M = 1$.  Then $X$ has a multiplicative
inverse $X^{-1}$ in $\mathcal P$; this may be verified by first
observing that the order of $X^{-1}$ must be $-M$ and then using
the equation $X X^{-1}=1$ to solve recursively for the
coefficients $b_i$ of $X^{-1}=\sum_{i=-\infty}^{-M}b_i\partial^i$.
Carrying out this process, one finds that the $b_i$ are
polynomials in the $a_i$ and their derivatives.  Similarly, there
exists $Y \in \mathcal P$ such that $Y^m = X$, as may be proved by
observing that $Y$ must be of order 1, and using the equation $Y^m
= X$ to solve recursively for the coefficients $c_i$ of
$Y=\sum_{i=-\infty}^1 c_i\partial^i$. These coefficients will be
uniquely determined if we specify that $c_1=1$, and in that case
the operator $Y$ so obtained will be denoted by $X^{1/m}$. (All
other solutions of $Y^m=1$ are of the form $\alpha X^{1/m}$ where
$\alpha$ is an $m$th root of unity.) For $k \in \mathbf Z$ we then
define $X^{k/m}$ to be the $k$th power of $X^{1/m}$. Since $X$ is
an integer power of $X^{1/m}$ it follows immediately that $X$ and
$X^{1/m}$ commute, and hence so do $X$ and $X^{k/m}$.

If in \eqref{psdiff} we have $a_i=0$ for all $i < 0$, then we say
that $X$ is a differential operator; obviously the product and sum
of any two differential operators is again a differential
operator. For general $X \in \mathcal P$, the differential part of
$X$, denoted by $X_+$, is defined to be the differential operator
obtained by omitting all the terms from $X$ which contain
$\partial^i$ with negative $i$. We also define $X_-$ to be
$X-X_+$. As usual, we define the commutator $[X_1,X_2]$ of two
elements of $\mathcal P$ by $[X_1,X_2]=X_1 X_2 - X_2 X_1$. Also,
if $X$ is given by \eqref{psdiff}, it will be useful to define the
residue of $X$, $\Res X$, to equal $a_{-1}$.  That is, $\Res X \in
\mathcal A$ is the
 coefficient of $\partial^{-1}$ in the expansion of $X$.  Finally,
 for $X$ as in \eqref{psdiff} we define $\sigma_2(X) \in  \mathcal P$
 by
\begin{equation}
\sigma_2(X)=a_{-1}\ \partial^{-1} + a_{-2}\ \partial^{-2}.
\label{defsig2}
\end{equation}

The Korteweg-de Vries hierarchy can be defined in terms of fractional powers of the differential operator $L$ given
by
\begin{equation}
L=\partial^2 + u. \label{L}
\end{equation}
From the above considerations, $L^{(2k+1)/2}$ is well-defined as
an element of $\mathcal P$ for each nonnegative integer $k$. When
we take its differential part, we obtain the operator
$\left(L^{(2k+1)/2}\right)_+$, which has the following important
property.

\begin{lem} The commutator $[\left(L^{(2k+1)/2}\right)_+,L]$ is a differential operator of order
0; that is, a polynomial in $u$ and its derivatives.  In fact, it is given by the equation
\begin{equation}
\left[\left(L^{(2k+1)/2}\right)_+,L\right]=2\left(\Res L^{(2k+1)/2}\right)'.
\label{commeqres}
\end{equation}
\label{kdvrhs}
\end{lem}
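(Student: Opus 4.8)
The plan is to verify \eqref{commeqres} by comparing the two sides of the claimed identity directly, exploiting the fact that the full commutator $[L^{(2k+1)/2},L]$ vanishes while $[(L^{(2k+1)/2})_-,L]$ is easy to control. First I would write $P = L^{(2k+1)/2}$, so that $P = P_+ + P_-$ with $P_-$ of order $\le -1$, and use the fact established in the preceding discussion that $P$ and $L$ commute: $[P,L] = 0$. Hence
\begin{equation*}
[P_+,L] = [P - P_-, L] = -[P_-,L].
\end{equation*}
The left-hand side, $[P_+,L]$, is a commutator of two differential operators and so is a differential operator; the right-hand side, $-[P_-,L]$, is a commutator of an operator of order $\le -1$ with the order-$2$ operator $L = \partial^2 + u$, and a quick degree count shows it has order $\le 1$. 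Since a pseudo-differential operator that is simultaneously a differential operator and of order $\le 1$ must be of the form $c_1\partial + c_0$, I would next argue that the coefficient of $\partial$ vanishes: writing $P_- = \Res P\,\partial^{-1} + (\text{lower})$, the only contribution to $\partial^1$ in $[P_-,L] = [P_-,\partial^2] + [P_-,u]$ comes from $[\Res P\,\partial^{-1},\partial^2]$, and using the multiplication rule $\partial^{-1}a = a\partial^{-1} - a'\partial^{-2} + \cdots$ one finds $[\partial^{-1},\partial^2] = 0$, so there is no $\partial^1$ term. Thus $[P_+,L] = -[P_-,L]$ is genuinely of order $0$, establishing the first assertion.

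For the explicit formula, I would extract the order-$0$ coefficient of $-[P_-,L]$. Only the two leading terms of $P_-$ matter, i.e. $\sigma_2(P) = a_{-1}\partial^{-1} + a_{-2}\partial^{-2}$ with $a_{-1} = \Res P$; everything of order $\le -3$ in $P_-$ contributes only to order $\le -2$ in the commutator with $L$. So it suffices to compute the order-$0$ part of $-[\sigma_2(P), \partial^2 + u]$. The bracket with $u$ contributes nothing at order $0$ (it lowers order), so I need the order-$0$ part of $-[a_{-1}\partial^{-1} + a_{-2}\partial^{-2}, \partial^2]$. Using $\partial^{-1}a = a\partial^{-1} - a'\partial^{-2} + a''\partial^{-3} - \cdots$ and similarly for $\partial^{-2}$, a short computation of $(a_{-1}\partial^{-1}+a_{-2}\partial^{-2})\partial^2 - \partial^2(a_{-1}\partial^{-1}+a_{-2}\partial^{-2})$ gives, after the order-$1$, order-$(-1)$, order-$(-2)$ terms sort themselves out, an order-$0$ coefficient equal to $-2a_{-1}' = -2(\Res P)'$; hence $-[P_-,L]$ has order-$0$ coefficient $2(\Res P)'$, which is \eqref{commeqres}.

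The main obstacle is purely bookkeeping: keeping the binomial coefficients ${i \choose l}$ and the signs straight in the expansions $\partial^{-1}a$, $\partial^{-2}a$ when commuting past $\partial^2$, and being careful that contributions from the tail $P_- - \sigma_2(P)$ really do not reach order $0$. There is no conceptual difficulty — the identity $[P,L]=0$ does all the real work, converting a statement about $[P_+,L]$ (which a priori could have high order) into a statement about $[P_-,L]$ (which manifestly cannot) — but the constant $2$ and the appearance of a derivative must be pinned down by the explicit coefficient computation sketched above.
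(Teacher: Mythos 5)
Your proposal is correct and follows essentially the same route as the paper: use $[L^{(2k+1)/2},L]=0$ to replace $[(L^{(2k+1)/2})_+,L]$ by $-[(L^{(2k+1)/2})_-,L]$, observe that the result is simultaneously a differential operator and of order $0$ (the paper gets order $0$ directly from the fact that a commutator of operators of orders $M_1$ and $M_2$ has order $M_1+M_2-1$, whereas you use the cruder bound $\le 1$ and then check the $\partial$-coefficient cancels — the same cancellation of leading terms), and finally read off the order-$0$ coefficient from the two leading terms of the negative part. Your explicit computation of that coefficient, yielding $2(\Res L^{(2k+1)/2})'$, is the step the paper leaves to the reader, and it is carried out correctly.
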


\begin{proof} As the commutator of two differential operators,
$[\left(L^{(2k+1)/2}\right)_+,L]$ is a differential operator.  Now
\begin{equation*}
[\left(L^{(2k+1)/2}\right)_+,L]=[L^{(2k+1)/2},L]-[\left(L^{(2k+1)/2}\right)_-,L],
\end{equation*}
and, as noted above, $L^{(2k+1)/2}$ commutes with $L$, so
\begin{equation}
[\left(L^{(2k+1)/2}\right)_+,L]=-[\left(L^{(2k+1)/2}\right)_-,L].
\label{comm}
\end{equation}
Observe that, in general, the commutator of an operator of order
$M_1$ and an operator of order $M_2$ has order $M_1 + M_2 -1$.
Since the right hand side of \eqref{comm} is a commutator of an
operator of order $-1$ and an operator of order 2, it therefore
has order 0.

Once it is established that both sides of \eqref{comm} are equal to a differential operator of order 0,
the identity \eqref{commeqres} is easily obtained by computing the term of order 0 in the expansion
of $-[\left(L^{(2k+1)/2}\right)_-,L]$.
\end{proof}

The Korteweg-de Vries hierarchy is a set of partial differential
equations, indexed by the natural numbers $k=0,1,2,3,\dots$, for
functions $u(x,t_{2k+1})$ of two real variables $x$ and $t_{2k+1}$. The $k$th
equation in the hierarchy is defined as
\begin{equation}
u_{t_{2k+1}} = 2\left(\Res L^{(2k+1)/2}\right)'. \label{KdV-k1}
\end{equation}
Here the subscripted $t_{2k+1}$ denotes the derivative with respect to
$t_{2k+1}$.  Starting with $k=0$, the first three in the hierarchy are given by:
\begin{equation}
\begin{aligned}
u_{t_1}&=u',\\
u_{t_3}&=\frac14 (u'''+6uu'),\\
u_{t_5}&=\frac{1}{16}(u'''''+10uu'''+20u'u''+30u^2u'). \end{aligned}
\label{examples}
\end{equation}
The second equation in \eqref{examples} is the KdV
equation \eqref{KdV}.

This definition of the hierarchy is due to Gelfand and Dickey, and
leads to simple formulations and proofs of many properties of
these equations, including the fact that they define commuting flows, which were formerly proved by more unwieldy
methods. Also, natural modifications of the definition lead
readily to more general hierarchies of equations (today called
Gelfand-Dickey hierarchies), of which the Korteweg-de Vries
hierarchy is just one, and which share many of the interesting
integrability properties of the Korteweg-de Vries hierarchy
\cite{Di, Di2}.

An important feature of the KdV hierarchy \eqref{KdV-k1} is that the differential polynomials which appear
on the right-hand side satisfy a simple recurrence relation.  Following the notation of Chapter 12 of \cite{Di}, let us define,
for $k=0,1,2,\dots,$
\begin{equation*}
R_{2k+1}=\frac{(-1)^k}{2}\Res L^{(2k-1)/2},
\end{equation*}
so that the KdV hierarchy takes the form
\begin{equation}
u_{t_{2k+1}} = 4(-1)^{k+1} R_{2k+3}'. \label{kdvhier}
\end{equation}

\begin{lem}
The differential polynomials $R_{2k+1}$ satisfy the recurrence relation
\begin{equation}
R_{2k+1}'''+4uR_{2k+1}'+2u'R_{2k+1} = -4R_{2k+3}', \label{Rrecur}
\end{equation}
for $k=0,1,2,\dots$,
with initial condition $R_1 = 1/2$.
\label{recrel}
\end{lem}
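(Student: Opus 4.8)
The plan is to derive the recurrence \eqref{Rrecur} directly from the defining relation \eqref{commeqres}, by extracting enough information from the identity $[(L^{(2k+1)/2})_+,L] = 2(\Res L^{(2k+1)/2})'$ to pin down a third-order relation among consecutive residues. First I would write $P_k = (L^{(2k+1)/2})_+$ and, using \eqref{comm}, work instead with the negative part $Q_k = (L^{(2k+1)/2})_-$, whose leading term is $(\Res L^{(2k+1)/2})\,\partial^{-1}$. The key structural fact is that $L^{(2k+1)/2} = L^{1/2}\cdot L^k$ and, more usefully, that $L^{(2k+3)/2} = L^{(2k+1)/2}\cdot L = L\cdot L^{(2k+1)/2}$; taking negative parts and being careful about the terms that "leak" between $(\cdot)_+$ and $(\cdot)_-$ when multiplying by the differential operator $L = \partial^2 + u$, one gets a relation expressing $Q_{k+1}$ in terms of $Q_k$, $L$, and $P_k$. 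I expect this to be cleanest if I track only the first two coefficients, i.e. apply the truncation operator $\sigma_2$ from \eqref{defsig2}: since the commutator $[Q_k,L]$ has order $0$, only the $\partial^{-1}$ and $\partial^{-2}$ coefficients of $Q_k$ contribute to it, so $\sigma_2(Q_k)$ carries all the relevant data.

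Concretely, the next step is to compute $[\sigma_2(Q_k),L]$ explicitly. Writing $\sigma_2(Q_k) = a\,\partial^{-1} + b\,\partial^{-2}$ with $a = \Res L^{(2k+1)/2}$, a direct expansion using the multiplication rule for $\mathcal P$ gives the $\partial^0$, $\partial^{-1}$, and $\partial^{-2}$ coefficients of $[\sigma_2(Q_k),L]$ in terms of $a$, $b$, and $u$ and their derivatives. The $\partial^0$-coefficient reproduces \eqref{commeqres} (namely $-2a' = -[Q_k,L]$ at order $0$, consistent with \eqref{comm}), while the $\partial^{-1}$ and $\partial^{-2}$ coefficients, which must vanish because $[P_k,L]$ is genuinely of order $0$, will give $b$ in terms of $a$: one finds $b' $ proportional to $a''$-type expressions, so that $b$ is determined (up to the recursion) by $a$. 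Feeding this back, together with the relation between $\sigma_2(Q_{k+1})$ and $\sigma_2(Q_k)$ coming from $L^{(2k+3)/2} = L\cdot L^{(2k+1)/2}$, yields a closed third-order relation: $a''' + 4ua' + 2u'a$ is proportional to the residue at the next level. Translating from $a = \Res L^{(2k+1)/2} = 2(-1)^k R_{2k+1}$ and matching constants gives exactly \eqref{Rrecur}, with the sign $(-1)^k$ in the definition of $R_{2k+1}$ arranged precisely so that the $-4$ appears on the right-hand side. The initial condition $R_1 = 1/2$ follows from $L^{-1/2} = \partial^{-1} + O(\partial^{-2})$, so $\Res L^{-1/2} = 1$ and hence $R_1 = \tfrac12 \Res L^{-1/2} = \tfrac12$; this can also be checked consistently against the $k=0$ case of \eqref{Rrecur}, which should read $0 = -4 R_3'$ at leading order — or rather, since $R_1$ is constant, the recurrence with $k=0$ determines $R_3' $ in terms of $u$, and one verifies it matches $\tfrac14(u''' + 6uu')$ up to the constant in \eqref{kdvhier}.

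The main obstacle will be the bookkeeping in the multiplication of pseudodifferential operators: keeping the "leakage" between differential and negative parts straight when commuting with $L = \partial^2 + u$, and correctly identifying which finitely many coefficients of $Q_k$ feed into a given coefficient of the commutator. In particular, one must be sure that no coefficient of $Q_k$ beyond $a$ and $b$ enters the order-$0$, order-$(-1)$, and order-$(-2)$ parts of $[Q_k,L]$ — this is where the observation that $[Q_k,L]$ has order exactly $0$ is doing real work, since it forces a chain of relations $b = (\text{stuff in }a)$, $c = (\text{stuff in }a,b)$, etc., propagating downward. I would organize this by first proving a short sublemma: for any $X \in \mathcal P$ of order $\le -1$ with $[X,L]$ of order $\le 0$, the order-$0$ coefficient of $[X,L]$ depends only on $\sigma_2(X)$, and in fact the map $\sigma_2(X)\mapsto (\text{order-}0\text{ coeff of }[X,L])$ is, up to the factor $2$ and a derivative, the identity. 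Once that sublemma is in hand, the rest is the explicit order-$(-1)$ and order-$(-2)$ computation plus the multiplicativity relation $L^{(2k+3)/2}=L\cdot L^{(2k+1)/2}$, and the recurrence \eqref{Rrecur} drops out after matching normalizations.
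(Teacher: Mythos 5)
Your strategy works, but it is packaged quite differently from the paper's proof, and one of your intermediate claims is false as stated and needs repair. The paper runs the whole computation through a generating series: it sets $T=\sum_r L^{r/2}z^{-r-4}$, applies the Adler map $H(X)=(\widehat LX)_+\widehat L-\widehat L(X\widehat L)_+$ with $\widehat L=L-z^2$, and uses $H(T)=0$ together with the identity $H(X)=H(\sigma_2(X))$ to land in one step on the single relation \eqref{eqR2} for the series $R=\sum_k 2(-1)^kR_{2k+1}z^{-2k-3}$; the recurrence is then coefficient matching in $z$. The point of $H$ is that for that particular bilinear combination the truncation to $\sigma_2$ is \emph{exact}, so only two coefficients are ever tracked. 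Your level-by-level version --- using that $[Q,L]$ has order $0$ for $Q=\bigl(L^{(2j-1)/2}\bigr)_-$ together with $\bigl(L^{(2j+1)/2}\bigr)_-=(QL)_-$ --- reaches the same conclusion, but it must track one coefficient more than you allow for: writing $Q=a\,\partial^{-1}+b\,\partial^{-2}+c\,\partial^{-3}+\cdots$, the order-$(-2)$ coefficient of $[Q,L]$ is $-2c'-b''-au'$, so $c$ \emph{does} enter there (contrary to your assertion that nothing beyond $a,b$ appears in the order $0$, $-1$, $-2$ parts of the commutator), and $c$ also enters the next residue, since $\Res\bigl((QL)_-\bigr)=au+c$. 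This is not a fatal gap --- it is exactly your ``chain of relations'': vanishing at order $-1$ gives $b''=-\tfrac12 a'''$, vanishing at order $-2$ then gives $c'=\tfrac14a'''-\tfrac12 au'$, whence
\[
4\bigl(\Res L^{(2j+1)/2}\bigr)'=4(au+c)'=a'''+4ua'+2u'a,
\]
which is \eqref{Rrecur} after substituting $a=\Res L^{(2j-1)/2}=2(-1)^jR_{2j+1}$ and $\Res L^{(2j+1)/2}=2(-1)^{j+1}R_{2j+3}$. So drop the claim that $\sigma_2(Q)$ alone carries all the relevant data and instead use the order-$(-2)$ vanishing to solve for $c'$; with that correction your argument closes, and your verification of $R_1=1/2$ and your $k=0$ consistency check are both fine. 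What the paper's Adler-map formulation buys is precisely the avoidance of this third coefficient (and of the separate product identity), at the cost of introducing the auxiliary algebra $\mathcal C$ and the series $T$.
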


\begin{proof}
The proof of this lemma is essentially an exercise on the material
in Section 1.7 of \cite{Di}, but for the reader's convenience we
indicate the details here.

Let $\mathcal C$ denote the set of all formal Laurent series in
$z$ of the form  $\displaystyle \sum_{r=-\infty}^\infty X_r z^r$, where $X_r \in
\mathcal P$ for $r \in \mathbf Z$.  Then $\mathcal C$ inherits an
operation of addition from $\mathcal P$, and if $S$ and $T$ are in
$\mathcal C$ and all but finitely many of the coefficients in $T$
are zero, then the products $ST$ and $TS$ are defined in $\mathcal
C$ by the usual term-by-term multiplication of series. Also, for
$S=\sum_{r=-\infty}^\infty X_r z^r$ in $\mathcal C$ we define
$S_+=\sum_{r=-\infty}^\infty (X_r)_+ z^r$,
$S_-=\sum_{r=-\infty}^\infty (X_r)_- z^r$,
$\Res(S)=\sum_{r=-\infty}^\infty (\Res X_r) z^r$, and
$\sigma_2(S)=\sum_{r=-\infty}^\infty \sigma(X_r) z^r$, where $\sigma_2$ is the operator defined in \eqref{defsig2}.

Let $\widehat L = L - z^2$, and define the map $H:\mathcal C \to
\mathcal C$ by
\begin{equation*}
H(X)=(\widehat LX)_+\widehat L -\widehat L(X\widehat L)_+.
\end{equation*}
(Dickey \cite{Di} calls $H$ the Adler map, as it was introduced in
section 4 of \cite{Ad}.) Since $(\widehat LX)\widehat L =\widehat L(X\widehat L)$, it follows that $H(X)=(\widehat
LX)_-\widehat L -\widehat L(X\widehat L)_-$ for all $X$ in
$\mathcal C$. Moreover, from the definition of $H$ and the fact
that $L$ is a differential operator of order 2, one sees easily
that $H(X)=H(\sigma_2(X))$ for all $X$ in $\mathcal C$.

Define
\begin{equation*}
T=\sum_{r=-\infty}^\infty L^{r/2}z^{-r-4}.
\end{equation*}
Clearly $\widehat L T = T\widehat L = 0$, so $H(T)=0$, and
hence also $H(\sigma_2(T))=0$. On the other hand, by observing that $\sigma_2(L^{2k})=0$ for all nonnegative integers $k$,
$\sigma_2(L^{r/2})=0$ for all integers $r \le -3$, and $\sigma_2(L^{-1})=\partial^{-2}$, we can write $\sigma_2(T)$ as
\begin{equation}
\sigma_2(T)=R\partial^{-1}+ \tilde R \partial^{-2},
\label{sig2T}
\end{equation}
where $R$ and $\tilde R$ are in $\mathcal C$ and
\begin{equation}
R=\Res \sigma_2(T)=\sum_{k=0}^\infty 2(-1)^k R_{2k+1}\ z^{-2k-3}.
\label{defR}
\end{equation}  Substituting \eqref{sig2T} into the equation $H(\sigma_2(T))=0$, we find
after a computation that
\begin{equation}
0=H(\sigma_2(T))=(-\tilde R''+2uR'+u'R-2z^2R')-(R''+2\tilde R')\partial,
\end{equation}
and therefore $\tilde R = -\frac12 R'$ and
\begin{equation}
\frac12 R''' + 2uR' + u'R = 2z^2R'.
\label{eqR2}
\end{equation}
Then substituting \eqref{defR} into \eqref{eqR2} gives \eqref{Rrecur} for $k=0,1,2,\dots$.  Finally, we can verify that
$R_1=1/2$ by directly computing $R_1 = \frac12 \Res L^{-1/2}$.

\end{proof}

Using the recurrence relation in Lemma \ref{recrel}, we find, for example, that the first few terms in the sequence $\{R_{2k+1}\}$
are

\begin{equation}
\begin{aligned}
R_1 &= 1/2,\\
R_3 &= (-1/4)u,\\
R_5 &= (1/16)(u''+3u^2),\\
R_7 & = (-1/64)(u''''+5u'^2+10uu''+10u^3).
\end{aligned}
\label{R1357}
\end{equation}

Of particular interest are time-independent or stationary
solutions of \eqref{kdvhier}.  If $u$ is a such a solution, then
$u$ satisfies \eqref{kdvhier} with $u_t = 0$, and hence
integration gives that $u$ satisfies the equation $R_{2k+3}=d$,
where $d$ is a constant, independent of $x$ and $t$.  Letting
$d_1=2d$, we can rewrite this equation in the form
\begin{equation*}
d_1R_1-R_{2k+3}=0.
\end{equation*}
More generally, we can view any solution of the equation
\begin{equation}
d_1R_1 + d_3R_3 + d_5R_5 + \dots + d_{2N+3}R_{2N+3}=0
\label{stathier}
\end{equation}
as a stationary solution of the equation
\begin{equation*}
u_t=d_3R_3'+d_5R_5'+\dots+d_{2N+3}R_{2N+3}',
\end{equation*}
which itself can be considered to be an equation in the KdV
hierarchy.   For this reason, following \cite{Di}, we refer to
equations \eqref{stathier} as the stationary equations of the KdV
hierarchy.  (They are also
sometimes called Lax-Novikov equations.)

Equation \eqref{stathier} is an ordinary
differential equation of order $2N$, and can therefore be
rewritten as a first-order system in phase space $\mathbf R^{2N}$.
It turns out that this system is of Hamiltonian form, and in fact
is completely integrable in the sense that it has $N$ independent
integrals in involution with each other.  In general, Liouville's
method provides a technique for actually integrating completely
integrable systems: that is, for explicitly finding the
transformation from coordinates of phase space to action-angle
variables.  However, this integration involves solving a system of
first-order partial differential equations.  For the system
\eqref{stathier}, Dubrovin \cite{Du} introduced a change of
variables under which this system of PDE's has a simple form and
is trivially solvable.  This is the change of variables we use below in Section
\ref{sec:2solconverse}.

\section{N-soliton profiles}
\label{sec:Nsoliton}

Another key aspect of the KdV hierarchy is that the flows which it defines all commute with each other, at least formally.
More precisely, one can check that the equations in \eqref{kdvhier}
have the formal structure of Hamiltonian equations with respect to a certain symplectic form, and are all in involution with each other
with respect to this form (see, for example, chapters 1 through 4 of \cite{Di}).  This suggests the following.  Assume that a function class $\mathcal S$ has been defined
such that for each $k \in \mathbf N$, the initial-value problem for equation \eqref{kdvhier} is well-posed on $\mathcal S$,
and let $S(t_{2k+1})$ be the solution map for this problem, which to each $\psi \in \mathcal S$ assigns the function
$S(t_{2k+1})[\psi]=u(\cdot,t_{2k+1}) \in \mathcal S$, where $u$ is the solution of \eqref{kdvhier} with initial data $u(x,0)=\psi(x)$.  Then
in light of the formal structure mentioned above, one would expect that the solution operators $S(t_{2k+1})$ and $S(t_{2l+1})$ commute with each other
as mappings on $\mathcal S$. Hence, for each $\psi \in \mathcal S$ and each $l \in \mathbf N$, one should be able to
define a simultaneous solution $u(x,t_1,t_3,t_5,\dots,t_{2l+1})$ to all
of the first $l$ equations in the hierarchy  by setting
\begin{equation*}
u(x,t_1,t_3,\dots,t_{2l+1})=S(t_1)S(t_3)\cdots S(t_{2l+1})\psi.
\end{equation*}

This formal analysis, however, does not lead easily to concrete results about general solutions
of the KdV hierarchy.  For this reason there has historically been great interest in constructing and elucidating the
structure of explicit solutions.  In this section we review the definition and basic properties of an important class of such solutions, the $N$-soliton solutions.

To begin the construction of $N$-soliton solutions, let $N \in
\mathbf N$, and for $1 \le j \le N$ define the functions
 \begin{equation}
y_j(x)=e^{\alpha_j x}+a_je^{-\alpha_j x}, \label{defyix}
\end{equation}
where $\alpha_j$ and
$a_j$ are  complex numbers  satisfying

\begin{equation}
\begin{aligned}
&\text{(i) for all $j \in \{1,\dots,N\}$, $\alpha_j \ne 0$ and $a_j
\ne 0$,} \\
&\text{(ii) for all $j, k \in \{1,\dots,N\}$, if $j < k$ then
$\alpha_j \ne \alpha_k$ and
 $0 \le \Re \alpha_j \le \Re \alpha_k$.}
\end{aligned}
\label{paramcond}
\end{equation}
We will use $D(y_1,\dots,y_N)$ to denote
the Wronskian of $y_1$, \dots $y_N$:
\begin{equation}
D(y_1,\dots,y_N) =\left|
\begin{matrix}
y_1 & \dots &y_N  \\
y_1' & \dots &y_N' \\
\dots & \dots & \dots \\
y_1^{(N-1)} & \dots & y_N^{(N-1)}  \\
\end{matrix}
\right|.
 \label{Delta}
\end{equation}

Next we will construct an operator of the form \eqref{L} from the
$y_j$, using a technique known as the ``dressing
method'' \cite{Di}.  First, on any interval $I$ where $D \ne 0$,
we define a differential operator $\phi$ of order $N$ by
\begin{equation}
\phi=\frac{1}{D}\left|
\begin{matrix}
y_1 & \dots &y_N & 1 \\
y_1' & \dots &y_N' & \partial \\
\dots & \dots & \dots & \dots \\
y_1^{(N-1)} & \dots & y_N^{(N-1)} & \partial^{N-1} \\
y_1^{(N)} & \dots & y_N^{(N)} & \partial^N
\end{matrix}
\right|.
 \label{defphi}
\end{equation}
Here it is understood that the determinant in \eqref{defphi} is
to be expanded along the final column, multiplying each
operator $\partial^i$ by its corresponding cofactor on the
left.  In other words,
\begin{equation}
\phi=\partial^N +
W_{N-1}\partial^{N-1}+W_{N-2}\partial^{N-2}+\dots+W_1
\partial + W_0, \label{expandphi}
\end{equation}
 where for $i=1,\dots,N-1$,
\begin{equation}
W_i=\frac{(-1)^{N+2+i}}{D}\left|
\begin{matrix}
y_1 & \dots &y_N\\
y_1' & \dots &y_N' \\
\dots & \dots & \dots  \\
y_1^{(i-1)} & \dots & y_N^{(i-1)}\\
y_1^{(i+1)} & \dots & y_N^{(i+1)}\\
\dots & \dots & \dots  \\
y_1^{(N)} & \dots & y_N^{(N)}
\end{matrix}
\right|,
 \label{defWi}
\end{equation}
and
\begin{equation}
W_0=\frac{(-1)^{N+2}}{D}\left|
\begin{matrix}
y_1' & \dots &y_N'\\
y_1'' & \dots &y_N'' \\
\dots & \dots & \dots  \\
y_1^{(N)} & \dots & y_N^{(N)}
\end{matrix}
\right|.
 \label{defW0}
\end{equation}


Next, we slightly generalize the notion of pseudo-differential operator defined in
Section \ref{sec:reviewKdv} to include formal sums of type \eqref{psdiff} in which the $a_i$
are no longer differential polynomials in a single variable $u$, but now are rational functions
of the $n$ symbols $y_1, y_2, \dots, y_n$ and their formal derivatives $y_1', y_1'', y_2', y_2''$, etc.
(forgetting for the moment that $y_1, y_2, \dots, y_n$ are actually functions of $x$).  The definitions of the multiplication and inverse
operations on pseudo-differential operators given in Section \ref{sec:reviewKdv} remain unchanged
for this larger algebra.  Thus, $\phi$ as defined in \eqref{defphi} has a formal
inverse $\phi^{-1}$, which is a pseudo-differential operator whose
coefficients are rational functions of $y_i$ and their derivatives, expressible as polynomials in $W_j$ and their
derivatives.   We now define $L$ as the formal pseudo-differential
operator given by
\begin{equation}
L=\phi \partial^2 \phi^{-1}. \label{genL}
\end{equation}
 
\begin{lem}
The differential part of $L$ is
\begin{equation}
L_+ = \partial^2 -2W_{N-1}'. \label{Lpos}
\end{equation}
\label{Lposlem}
\end{lem}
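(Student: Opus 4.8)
The plan is to avoid computing the pseudo-differential inverse $\phi^{-1}$ explicitly, and instead exploit the intertwining relation that is equivalent to the definition \eqref{genL}: multiplying $L=\phi\partial^2\phi^{-1}$ on the right by $\phi$ gives
\begin{equation*}
L\phi=\phi\partial^2 .
\end{equation*}
First I would record the basic order count. Since $\phi$, $\partial^2$, and $\phi^{-1}$ have orders $N$, $2$, $-N$, with leading coefficients $1$, $1$, $1$, multiplicativity of the order (and of leading coefficients) shows that $L$ is a pseudo-differential operator of order $2$ with leading coefficient $1$. So I may write
\begin{equation*}
L=\partial^2+b_1\partial+b_0+(\text{terms of order}\le -1)
\end{equation*}
for some coefficients $b_1,b_0$ in the (enlarged) coefficient algebra, and the content of \eqref{Lpos} is precisely that $b_1=0$ and $b_0=-2W_{N-1}'$.

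The key observation is that the tail of $L$ of order $\le -1$, multiplied on the right by the order-$N$ operator $\phi$, produces only terms of order $\le N-1$. Hence, when I compare the coefficients of $\partial^{N+2}$, $\partial^{N+1}$, and $\partial^{N}$ in $L\phi=\phi\partial^2$, only the "head" $\partial^2+b_1\partial+b_0$ of $L$ and the top coefficients $1,W_{N-1},W_{N-2}$ of $\phi$ from \eqref{expandphi} can contribute. Using the product rule $\partial^k a=\sum_l\binom{k}{l}a^{(l)}\partial^{k-l}$ — in particular $\partial^2 a=a\partial^2+2a'\partial+a''$ and $\partial a=a\partial+a'$ — a short expansion gives that in $L\phi$ the coefficient of $\partial^{N+2}$ is $1$, of $\partial^{N+1}$ is $W_{N-1}+b_1$, and of $\partial^{N}$ is $2W_{N-1}'+W_{N-2}+b_1W_{N-1}+b_0$, whereas in $\phi\partial^2=\partial^{N+2}+W_{N-1}\partial^{N+1}+W_{N-2}\partial^{N}+\cdots$ the corresponding coefficients are $1$, $W_{N-1}$, $W_{N-2}$.

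Equating the $\partial^{N+1}$ coefficients yields $b_1=0$, and then equating the $\partial^{N}$ coefficients yields $2W_{N-1}'+b_0=0$, i.e.\ $b_0=-2W_{N-1}'$. Since $b_1$ and $b_0$ are the only coefficients of nonnegative order in $L$ other than the leading $1$, this gives $L_+=\partial^2-2W_{N-1}'$, which is \eqref{Lpos}. I do not expect a genuine obstacle here: the only things to be careful about are (i) verifying the order count so that no negative-order term of $L$ contaminates orders $\ge N$ (immediate from the remark above), and (ii) the bookkeeping in the three coefficient comparisons. One could alternatively expand $\phi^{-1}=\partial^{-N}-W_{N-1}\partial^{-N-1}+\cdots$ out to two terms and multiply $\phi\partial^2\phi^{-1}$ directly, but the intertwining approach is shorter and keeps the computation to a minimum.
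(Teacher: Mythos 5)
Your proof is correct. It takes a genuinely different (and arguably cleaner) route than the paper's: the paper proves \eqref{Lpos} by writing $L=(\phi\partial^{-N})\partial^2(\phi\partial^{-N})^{-1}$, expanding $\phi\partial^{-N}=1+W_{N-1}\partial^{-1}+W_{N-2}\partial^{-2}+O(\partial^{-3})$, explicitly computing its inverse to two terms, $(\phi\partial^{-N})^{-1}=1-W_{N-1}\partial^{-1}+(W_{N-1}^2-W_{N-2})\partial^{-2}+O(\partial^{-3})$, and multiplying out the nonnegative-order part. You instead avoid computing any inverse by using the intertwining relation $L\phi=\phi\partial^2$ together with the order count showing that $L_-\phi$ has order at most $N-1$, and then comparing coefficients at orders $N+2$, $N+1$, $N$; I have checked the three coefficient comparisons and they are right. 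What your approach buys is that it sidesteps the recursive computation of $\phi^{-1}$ entirely, and it dovetails nicely with the identity $L_-\phi=\phi\partial^2-L_+\phi$ that the paper itself uses immediately afterwards in Lemma \ref{Lminlem}; what the paper's approach buys is the explicit two-term expansion of $(\phi\partial^{-N})^{-1}$, which is a reusable ingredient if one wants more coefficients of $L$ later. Either argument is complete and elementary.
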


\begin{proof}
First observe that
\begin{equation}
L=(\phi\partial^{-N})\partial^2(\phi
\partial^{-N})^{-1}.
\label{obs1}
\end{equation}
Now we can write
\begin{equation}
\phi
\partial^{-N}=1+W_{N-1}\partial^{-1}+W_{N-2}\partial^{-2}+O(\partial^{-3}),
\label{obs2}
\end{equation}
where ``$O(\partial^{-3})$'' denotes terms containing $\partial^j$
with $j \le -3$.  Also, a computation shows that
\begin{equation}
(\phi
\partial^{-N})^{-1}=1-W_{N-1}\partial^{-1} +
\left(W_{N-1}^2-W_{N-2}\right)\partial^{-2}+O(\partial^{-3}).
\label{obs3}
\end{equation}
Equation \eqref{Lpos} then follows easily by inserting
\eqref{obs2} and \eqref{obs3} into \eqref{obs1} and carrying out the multiplication to determine
the terms of nonnegative order.
\end{proof}

\begin{lem}
Define $W_{-1}=0$.  Then
\begin{equation}
L_-\phi =-\sum_{j=0}^{N-1}
\left(W_j''+2W_{j-1}'-2W_{N-1}'W_j\right)\partial^j.
\label{Lmincoeff}
\end{equation} \label{Lminlem}
\end{lem}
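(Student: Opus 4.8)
The plan is to exploit the intertwining identity $L\phi = \phi\partial^2$, which is immediate from the definition $L = \phi\partial^2\phi^{-1}$ in \eqref{genL} upon multiplying on the right by $\phi$. Since $L = L_+ + L_-$, this gives
\begin{equation*}
L_-\phi = \phi\partial^2 - L_+\phi = \phi\partial^2 - \partial^2\phi + 2W_{N-1}'\phi,
\end{equation*}
where in the last step I have used the formula $L_+ = \partial^2 - 2W_{N-1}'$ from Lemma \ref{Lposlem}. Note that this expression for $L_-\phi$ is manifestly a differential operator (a difference of products of differential operators), even though $L_-$ itself has negative order; once this is known one only needs to compute its coefficients.

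To do so, I would expand everything in powers of $\partial$, writing $\phi = \sum_{j=0}^N W_j\partial^j$ with the convention $W_N = 1$. The only ingredient needed is the commutation rule $\partial^2 a = a\partial^2 + 2a'\partial + a''$ applied with $a = W_j$, which yields
\begin{equation*}
\phi\partial^2 - \partial^2\phi = -\sum_{j=0}^N\left(2W_j'\partial^{j+1} + W_j''\partial^j\right),
\end{equation*}
while $2W_{N-1}'\phi = \sum_{j=0}^N 2W_{N-1}'W_j\,\partial^j$. Collecting the coefficient of $\partial^j$ in the sum of these two expressions gives $-(W_j'' + 2W_{j-1}' - 2W_{N-1}'W_j)$ for each $j$; here the convention $W_{-1} = 0$ is precisely what makes this correct at $j = 0$, and for $0 \le j \le N-1$ it reproduces the claimed formula.

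Finally I would check that the two potential top-order contributions vanish: the coefficient of $\partial^{N+1}$ is $-2W_N' = 0$, and the coefficient of $\partial^N$ is $-2W_{N-1}' - W_N'' + 2W_{N-1}'W_N = 0$ since $W_N = 1$. This confirms that $L_-\phi$ has order at most $N-1$, consistent with $L_-$ having order $\le -1$, and completes the proof. There is no genuine obstacle here --- the argument is a short formal computation --- so the only points requiring care are the index bookkeeping and the boundary conventions $W_{-1} = 0$ and $W_N = 1$.
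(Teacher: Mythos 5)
Your proposal is correct and follows exactly the paper's route: starting from $L_-\phi = \phi\partial^2 - L_+\phi$ (the paper's equation \eqref{Dickey}), substituting \eqref{expandphi} and \eqref{Lpos}, and expanding; your index bookkeeping, including the cancellation of the $\partial^{N+1}$ and $\partial^N$ terms and the role of the convention $W_{-1}=0$, is accurate and simply fills in the computation the paper leaves to the reader.
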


\begin{proof}
Since $L_-=L-L_+$, we have from \eqref{genL} that
\begin{equation}
L_-\phi=\phi\partial^2-L_+\phi. \label{Dickey}
\end{equation}
The desired result follows by substituting \eqref{expandphi} and
\eqref{Lpos} into the right-hand side, and carrying out the
multiplications.
\end{proof}

So far, in discussing $\phi$ and $L$, we have considered them only
as formal pseudo-differential operators with coefficients that are
rational functions in the symbols $y_i$, $y_i'$, $y_i''$,\dots.
Now, however, we wish to ``remember'' the fact that these
coefficients are specific functions of $x$.  To this end we first
observe that by \eqref{paramcond} the functions $y_1,\dots,y_N$
are analytic and linearly independent.  Therefore, by a theorem of
Peano (see \cite{Boc}), their Wronskian $D$ cannot vanish
identically on any open interval in $\mathbf R$.  In particular,
by continuity there exists an open interval $I$ on $\mathbf R$
such that $D(x) \ne 0$ for all $x \in I$.  Therefore, on $I$ the
right-hand-side of \eqref{defphi} defines a linear differential
operator with smooth coefficients $W_i$ for $i=1,\dots,N-1$.  To
emphasize the distinction between the formal operator $\phi$ and
its concrete realization, we introduce the notation $r(\phi)$ for
the differential operator with smooth coefficients on $I$ obtained
by remembering that the $y_i$ are certain functions of $x$.

More generally, if $X$ is any pseudo-differential operator
whose coefficients are formal polynomials in $W_i$ and their derivatives,
we define $r(X)$ to be the operator obtained by remembering that
the coefficients of $X$ are actually smooth functions of $x$ on
$I$.  Thus $r$ defines an algebra homomorphism from $\mathcal P$ to the
 to the algebra of pseudo-differential operators
with coefficients that are smooth functions on $I$.

Although it is clear from Lemma \ref{Lminlem} that $L_-$ is not zero
as a formal pseudo-differential operator, nevertheless the coefficients
of $L_-$ evaluate to zero when viewed as functions on $I$.  That is, we have
the following result.

\begin{lem}
When $L$ is defined as in \eqref{genL}, with $\phi$ given by
\eqref{defphi}, then
\begin{equation*}
r(L_-)=0.
\end{equation*}
\label{Lmineq0}
\end{lem}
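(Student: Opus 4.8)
The plan is to show that $r(L_-)$, which a priori is a pseudo-differential operator with smooth coefficients on $I$, in fact annihilates each of the functions $y_1,\dots,y_N$, and then argue that a pseudo-differential operator of negative order cannot do this nontrivially. First I would observe that by construction the concrete operator $r(\phi)$ kills each $y_j$: indeed, expanding the determinant in \eqref{defphi} along the last column and then \emph{replacing the operator $\partial^i$ by its action on $y_j$}, one obtains a determinant with two equal columns (the $j$th column and the ``last'' column both become $(y_j,y_j',\dots,y_j^{(N)})^T$), hence $r(\phi)y_j = 0$ for $1 \le j \le N$. Since $L = \phi\partial^2\phi^{-1}$ we have $L\phi = \phi\partial^2$ as formal operators, and applying the homomorphism $r$ gives $r(L)r(\phi) = r(\phi)\partial^2$ on $I$. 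Writing $r(L) = r(L_+) + r(L_-)$ and using Lemma \ref{Lposlem}, which identifies $r(L_+) = \partial^2 - 2W_{N-1}'$, one can rearrange to express $r(L_-)r(\phi)$ as an explicit differential operator; alternatively, and more directly, Lemma \ref{Lminlem} already gives $r(L_-\phi)$ as a differential operator of order $\le N-1$, namely $-\sum_{j=0}^{N-1}(W_j'' + 2W_{j-1}' - 2W_{N-1}'W_j)\partial^j$ with coefficients now interpreted as functions on $I$.

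Next I would apply $r(L_-\phi)$ to the functions $y_j$. On one hand, $r(L_-\phi)y_j = r(L_-)\bigl(r(\phi)y_j\bigr) = r(L_-)(0) = 0$ for each $j$. On the other hand, $r(L_-\phi)$ is by Lemma \ref{Lminlem} a genuine differential operator of order at most $N-1$. So we have an $(N-1)$st order (or lower) linear differential operator with smooth coefficients on $I$ that annihilates the $N$ linearly independent functions $y_1,\dots,y_N$. An $m$th-order linear ODE has an $m$-dimensional solution space, so a nonzero differential operator of order $\le N-1$ cannot have $N$ linearly independent solutions; hence $r(L_-\phi) = 0$ identically as an operator on $I$, i.e.\ all the coefficients $W_j'' + 2W_{j-1}' - 2W_{N-1}'W_j$ vanish as functions of $x$ on $I$.

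Finally I would deduce $r(L_-) = 0$ from $r(L_-\phi) = 0$. Since $r(\phi)$ is a differential operator of order exactly $N$ with leading coefficient $1$, it is invertible as a pseudo-differential operator with smooth coefficients on $I$; more concretely, the formal inverse $\phi^{-1}$ has coefficients that are polynomials in the $W_i$ and their derivatives, so $r(\phi^{-1})$ makes sense on $I$ and $r(\phi)r(\phi^{-1}) = r(\phi^{-1})r(\phi) = 1$. Therefore $r(L_-) = r(L_-\phi)\,r(\phi^{-1}) = 0$. I should be a little careful here to justify that $r$ is compatible with multiplication of these pseudo-differential operators and with taking the inverse, but this was already recorded in the text: $r$ is an algebra homomorphism on the subalgebra generated by the $W_i$, and $\phi^{-1}$ lies in that subalgebra.

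The main obstacle, such as it is, is the bookkeeping needed to make the homomorphism argument airtight — specifically, verifying that $r$ really does commute with the operations used ($r(XY) = r(X)r(Y)$, $r(X^{-1}) = r(X)^{-1}$) on the relevant class of operators, and that Lemma \ref{Lminlem}'s formal identity $L_-\phi = -\sum_j(\dots)\partial^j$ survives under $r$ to give an honest finite-order differential operator on $I$. Once that is granted, the crux — that a differential operator of order $< N$ killing $N$ independent functions must vanish — is immediate from the uniqueness theorem for linear ODEs, and the rest is formal. I expect the proof to be short, essentially: $r(\phi)y_j = 0$ by the equal-columns observation; $r(L_-\phi)y_j = r(L_-)(0) = 0$; order count forces $r(L_-\phi) = 0$; invert $r(\phi)$ to conclude $r(L_-) = 0$.
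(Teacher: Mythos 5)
Your overall architecture is the same as the paper's: show that $r(L_-\phi)$ is an honest differential operator of order at most $N-1$ (Lemma \ref{Lminlem}), show that it annihilates $y_1,\dots,y_N$, conclude that its coefficients vanish identically on $I$, and then right-multiply by $\phi^{-1}$ to conclude $r(L_-)=0$. The first and last steps are fine. The gap is in the middle: you compute $r(L_-\phi)y_j$ by writing it as $r(L_-)\bigl(r(\phi)y_j\bigr)=r(L_-)(0)=0$, but $r(L_-)$ is a pseudo-differential operator of strictly negative order, and such an operator has no defined action on functions --- the symbol $\partial^{-1}$ is purely formal, and the multiplication in $\mathcal P$ involves infinite series that do not correspond to any composition of maps on smooth functions. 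The identity $r(XY)f=r(X)\bigl(r(Y)f\bigr)$ is only available when $X$ is a differential operator, and the factorization you use is precisely the case where it is not. The paper circumvents this by invoking \eqref{Dickey}, $L_-\phi=\phi\partial^2-L_+\phi$: both terms on the right are genuine differential operators, so $r(\phi\partial^2)y_j=\alpha_j^2\,r(\phi)y_j=0$ and $r(L_+\phi)y_j=r(L_+)\bigl(r(\phi)y_j\bigr)=0$ are legitimate computations, and hence $r(L_-\phi)y_j=0$. With that substitution your argument closes.

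A secondary, more minor point: to pass from ``$r(L_-\phi)$ annihilates $y_1,\dots,y_N$'' to ``its coefficients vanish,'' you appeal to the dimension of the solution space of a linear ODE of order at most $N-1$, which requires care when the leading coefficient of the operator may vanish on $I$. The cleaner route, and the one the paper takes, is pointwise linear algebra: at each $x\in I$ the relations $\sum_{j=0}^{N-1}F_j(x)\,y_i^{(j)}(x)=0$ for $i=1,\dots,N$ form a square linear system for the vector $\bigl(F_0(x),\dots,F_{N-1}(x)\bigr)$ whose coefficient matrix has determinant $D(x)\neq 0$, forcing $F_j(x)=0$ for every $j$ and every $x\in I$. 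No ODE theory is needed.
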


\begin{proof} From \eqref{Lmincoeff} we have that, as a formal
pseudo-differential operator,
\begin{equation*}
L_-\phi=\sum_{j=0}^{N-1}F_j\partial^j,
\end{equation*}
where each $F_j$ is a differential polynomial in
$W_0,\dots,W_{N-1}$.  Therefore
\begin{equation*}
r(L_-\phi)=\sum_{j=0}^{N-1}F_j(x)\partial^j,
\end{equation*}
where the $F_j(x)$ are smooth functions on $I$.

For all $i=1,\dots,N$, we see from \eqref{defphi} that
\begin{equation}
r(\phi) y_i = 0, \label{phiyieq0}
\end{equation}
and since $\partial^2 y_i=\alpha_i^2 y_i$, then
$r(\phi\partial^2)y_i=0$ also. Therefore \eqref{Dickey} implies
that
\begin{equation}
r(L_-\phi) y_i=\sum_{j=0}^{N-1}F_j(x)\partial_jy_i(x)=0 \
\text{for $i=1,\dots,N$}. \label{rlpyez}
\end{equation}
Since $D(x)=\det(\partial_jy_i(x))\ne 0$ for all $x \in I$, it
follows from \eqref{rlpyez}  that $F_j(x)=0$ for all $x \in I$ and
all $j=0,1,\dots,N-1$.

Now as a formal pseudo-differential operator, $\phi$ is
invertible, with inverse $\phi^{-1}$ of the form
\begin{equation*}
\phi^{-1}=\sum_{k=0}^{\infty}B_k \partial^{-N-k},
\end{equation*}
where the $B_k$ are differential polynomials in $W_0,\dots,
W_{N-1}$.  Hence
\begin{equation*}
L_-=(L_-\phi)\phi^{-1}=\sum_{j=0}^{N-1} F_j\partial^j
\sum_{k=0}^{\infty}B_k \partial^{-N-k}=\sum_{r=1}^\infty
\sum_{j=0}^{N-1}(F_j G_{j,r})\partial^{N-r},
\end{equation*}
where each $G_{j,r}$ is a finite linear combination of the $B_k$
and their formal derivatives $B_k'$, $B_k''$,\dots.  Since $F_j(x)=0$
for all $x \in I$, it follows that the coefficients
$\sum_{j=0}^{N-1}(F_j(x)G_{j,r}(x))$ of $r(L_-)$ are also identically zero
on $I$.
\end{proof}

The following consequence of Lemma \ref{Lmineq0} will be useful in Section
\ref{sec:2solconverse}.

\begin{cor}
If $I$ is any interval such that  $D(x) \ne 0$ for all $x \in I$, then the equation
\begin{equation}
W_{N-1}'-W_{N-1}^2+2W_{N-2}+\sum_{i=1}^N{\alpha_i^2} = 0
\label{wn1wn2}
\end{equation}
holds at all points of $I$.
\label{nicecor}
\end{cor}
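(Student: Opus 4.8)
The plan is to extract equation \eqref{wn1wn2} from the vanishing of the coefficients of $r(L_-)$ by computing a specific coefficient of $L$ as a formal pseudo-differential operator and then applying Lemma \ref{Lmineq0}. Recall that by \eqref{genL} we have $L = \phi\partial^2\phi^{-1}$, so $L_- = L - L_+$, and by Lemma \ref{Lposlem} the differential part is $L_+ = \partial^2 - 2W_{N-1}'$. Since $L$ is of order $2$, we have $L_- = a_{-1}\partial^{-1} + a_{-2}\partial^{-2} + \dots$ for some differential polynomials $a_{-1}, a_{-2}, \dots$ in the $W_j$. By Lemma \ref{Lmineq0}, $r(L_-) = 0$ on $I$, so in particular $r(a_{-1}) = 0$ identically on $I$. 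The strategy is to compute $a_{-1}$ explicitly and show that the resulting identity, after using $W_{-1} = 0$, is exactly \eqref{wn1wn2} — provided we also identify $\sum_{i=1}^N \alpha_i^2$ with an appropriate combination of the $W_j$, which is where the Wronskian structure enters.

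First I would compute the $\partial^{-1}$ coefficient of $L$. The cleanest route is to use \eqref{obs1}, writing $L = (\phi\partial^{-N})\partial^2(\phi\partial^{-N})^{-1}$ together with the expansions \eqref{obs2} and \eqref{obs3} carried out to one more order than in the proof of Lemma \ref{Lposlem}: that is, one needs $\phi\partial^{-N} = 1 + W_{N-1}\partial^{-1} + W_{N-2}\partial^{-2} + W_{N-3}\partial^{-3} + O(\partial^{-4})$ and the corresponding expansion of the inverse. Multiplying out $(1 + W_{N-1}\partial^{-1} + \dots)\,\partial^2\,(1 - W_{N-1}\partial^{-1} + \dots)$ and collecting the coefficient of $\partial^{-1}$ gives a differential polynomial in $W_{N-1}, W_{N-2}, W_{N-3}$ and their derivatives. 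Setting it equal to zero (by Lemma \ref{Lmineq0}) yields a relation; the terms involving $W_{N-3}$ should cancel or collapse, leaving an identity of the shape $W_{N-1}'' + (\text{stuff in } W_{N-1}, W_{N-2}) = 0$, which upon one integration — or, better, by noting that $a_{-2}$ must vanish too and using the relation $\tilde R = -\tfrac12 R'$-style bookkeeping — reduces to \eqref{wn1wn2} up to a constant.

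The identification of the constant $\sum_{i=1}^N \alpha_i^2$ is the step I expect to be the main obstacle. It cannot come purely from the formal algebra, since formally $W_{N-1}$ and $W_{N-2}$ are free symbols; it must use the concrete fact \eqref{defyix} that $y_j'' = \alpha_j^2 y_j$. The cleanest way to pin it down is to evaluate the constant in the asymptotic regime $x \to +\infty$ (or $-\infty$), where from \eqref{defyix} one has $y_j \sim e^{\alpha_j x}$, hence $D \sim \prod_j e^{\alpha_j x}$ times a Vandermonde determinant in the $\alpha_j$, and from \eqref{defWi}–\eqref{defW0} the coefficients $W_{N-1}$ and $W_{N-2}$ tend to the elementary symmetric functions $-\sum_j \alpha_j$ and $\sum_{j<k}\alpha_j\alpha_k$ respectively, with derivatives tending to zero. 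Plugging these limiting values into the candidate identity $W_{N-1}' - W_{N-1}^2 + 2W_{N-2} + C = 0$ forces $C = (\sum_j \alpha_j)^2 - 2\sum_{j<k}\alpha_j\alpha_k = \sum_j \alpha_j^2$, as desired. (Alternatively, one can observe directly that $r(\phi)$ annihilates each $y_j$, expand $r(\phi)y_j = 0$ using \eqref{expandphi}, and extract from the two highest-order coefficient relations — those coming from the $\partial^{N-1}$ and $\partial^{N-2}$ rows of \eqref{defphi} — the Newton-type identities that express $W_{N-1}'$ and $W_{N-2}$ in terms of the $\sum\alpha_i^2$ via the differential equations $y_j'' = \alpha_j^2 y_j$; this is more in the spirit of the paper's elementary calculations and avoids asymptotics, but requires a little more care in combining the pieces.) Either way, once the constant is identified, \eqref{wn1wn2} holds on $I$ by Lemma \ref{Lmineq0}, completing the proof.
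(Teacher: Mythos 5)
Your proposal is correct and follows essentially the same route as the paper: the paper likewise derives the second-order identity $W_{N-1}''+2W_{N-2}'-2W_{N-1}'W_{N-1}=0$ from $r(L_-)=0$ (though it reads it off directly from the coefficient of $\partial^{N-1}$ in the already-computed expansion \eqref{Lmincoeff} of $L_-\phi$, rather than recomputing the $\partial^{-1}$ coefficient of $L_-$ itself --- your expectation that the $W_{N-3}$ terms cancel is borne out), integrates once, and evaluates the constant by exactly your $x\to+\infty$ asymptotics combined with the Vandermonde identities $d_1/d=\sum_i\alpha_i$ and $d_2/d=\sum_{i<j}\alpha_i\alpha_j$. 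The one detail you omit is that this asymptotic evaluation is only directly valid when the $\alpha_j$ are positive reals (for purely imaginary $\alpha_j$, or coincident real parts as allowed by \eqref{paramcond}, no single exponential dominates $D$); the paper closes this gap with a final appeal to analytic continuation in the $\alpha_j$.
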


\begin{proof}
From Lemma \ref{Lmineq0} we have $r(L_-)=0$, and hence each
coefficient in the sum in \eqref{Lmincoeff} is identically zero as a function
of $x$ on $I$.  In
particular,
\begin{equation*}
W_{N-1}''+2W_{N-2}'-2W_{N-1}'W_{N-1}=0
\end{equation*}
on $I$. Integrating gives
\begin{equation*}
W_{N-1}'+2W_{N-2}-W_{N-1}^2+C=0,
\end{equation*}
where $C$ is a constant.

To evaluate $C$, first assume that $\alpha_1$, \dots, $\alpha_N$
are positive numbers, and observe that since $y_i$ behaves as $x
\to \infty$ like $e^{\alpha_i x}$, we have that $\displaystyle
\lim_{x \to \infty} W_{N-1}=-d_1/d$, $\displaystyle\lim_{x \to
\infty} W_{N-1}' = 0$, and $\displaystyle\lim_{x \to \infty}
W_{N-2}=d_2/d$, where
\begin{equation*}
d= \left|
\begin{matrix}
1 & \dots & 1\\
\alpha_1 & \dots &\alpha_N\\
\alpha_1^2 & \dots &\alpha_N^2 \\
\dots & \dots & \dots  \\
\alpha_1^{N-1} & \dots & \alpha_N^{N-1}
\end{matrix}
\right|
\end{equation*}
is the Vandermonde matrix of the numbers
$\alpha_1$, \dots, $\alpha_N$, and
\begin{equation*}
d_1=\left|\begin{matrix}
1& \dots & 1\\
\alpha_1 & \dots &\alpha_N\\
\alpha_1^2 & \dots &\alpha_N^2\\
\dots & \dots & \dots  \\
\alpha_1^{N-2} & \dots & \alpha_N^{N-2}\\
\alpha_1^{N} & \dots & \alpha_N^{N}\\ \end{matrix}\right|, \quad
d_2= \left|\begin{matrix}
1 & \dots & 1\\
\alpha_1 & \dots &\alpha_N\\
\alpha_1^2 & \dots &\alpha_N^2\\
\dots & \dots & \dots  \\
\alpha_1^{N-3} & \dots & \alpha_N^{N-3}\\
\alpha_1^{N-1} & \dots & \alpha_N^{N-1}\\
\alpha_1^N & \dots & \alpha_N^N\\
\end{matrix}
\right|.
\end{equation*}
Therefore
\begin{equation}
C=\lim_{x \to \infty}\left(
W_{N-1}^2-2W_{N-2}-W_{N-1}'\right)=\left(\frac{d_1}{d}\right)^2-2\left(\frac{d_2}{d}\right).
\label{C}
\end{equation}
But it follows from a classic exercise on Vandermonde matrices
(see problem 10 on p.\ 99 of \cite{PS}, or \cite{MacS}) that
$$
d_1/d=\sum_{i=1}^n\alpha_i \quad\text{and} \quad d_2/d =
\sum_{1\le i < j \le n} \alpha_i\alpha_j.
$$
Substituting in \eqref{C}, we obtain $C=\sum_{i=1}^N{\alpha_i^2}$,
as desired.  The result for general complex values of $\alpha_1$,
\dots, $\alpha_N$ then follows by analytic continuation.
\end{proof}

\noindent
 \textit{Remark.} Since \eqref{wn1wn2} is a Ricatti
equation, the substitution $W_{N-1}=-D'/D$ converts it to the
following linear equation for $D$:
\begin{equation}
D''=\left(2W_{N-2}+\sum_{i=1}^N \alpha_i^2\right)D.
\label{lineqforD}
\end{equation}
Corollary \ref{nicecor} is therefore equivalent to the assertion
that \eqref{lineqforD} holds when $D$ is given by \eqref{Delta}
and \eqref{defyix}.

\begin{cor}
Suppose $D$ is given by \eqref{Delta}, $\phi$ by \eqref{defphi},
and $L$ by \eqref{genL}. Then
\begin{equation*}
r(L)=\partial^2+u,
\end{equation*}
where
\begin{equation}
u= 2\left(\frac{D'}{D}\right)'. \label{summarizeu}
\end{equation}
\end{cor}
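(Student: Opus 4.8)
The plan is to combine the two structural facts already established about $L$. By Lemma \ref{Lposlem}, the differential part of the formal operator $L=\phi\partial^2\phi^{-1}$ is $L_+=\partial^2-2W_{N-1}'$, while Lemma \ref{Lmineq0} asserts that $r(L_-)=0$ on $I$. Since $r$ is an algebra homomorphism and $L=L_++L_-$, it follows immediately that
$$r(L)=r(L_+)=\partial^2-2\bigl(r(W_{N-1})\bigr)'.$$
Thus the whole corollary reduces to identifying the single coefficient $r(W_{N-1})$ explicitly, and the claim $u=2(D'/D)'$ will follow once we show $r(W_{N-1})=-D'/D$ on $I$.

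To compute $W_{N-1}$, I would read off the coefficient of $\partial^{N-1}$ in the cofactor expansion of \eqref{defphi} along its last column (equivalently, set $i=N-1$ in \eqref{defWi}). This gives $W_{N-1}=-\tfrac1D\,\Delta_{N-1}$, where $\Delta_{N-1}$ is the determinant obtained from the Wronskian matrix in \eqref{Delta} by deleting the row of $(N-1)$st derivatives and adjoining the row of $N$th derivatives at the bottom; one must of course keep track of the sign $(-1)^{N+2+(N-1)}=-1$ coming from the cofactor. Next I would invoke the classical rule for differentiating a determinant: differentiating $D=\det\bigl(y_j^{(k-1)}\bigr)$ row by row, the derivative of each of the first $N-1$ rows coincides with the row just below it and hence contributes a determinant with two equal rows, namely zero; only the derivative of the last row survives, and it produces precisely $\Delta_{N-1}$. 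Hence $D'=\Delta_{N-1}$, so $W_{N-1}=-D'/D$ as functions on $I$ (division being legitimate since $D\ne 0$ there). Substituting back gives $u=-2(W_{N-1})'=2(D'/D)'$, which is \eqref{summarizeu}.

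I do not expect a genuine obstacle here: the argument is entirely a matter of Wronskian bookkeeping once Lemmas \ref{Lposlem} and \ref{Lmineq0} are in hand. The only points requiring care are the sign in the cofactor expansion and the observation that everything is taking place on an interval $I$ where $D$ does not vanish, so that $W_{N-1}=-D'/D$ is a smooth function and, in particular, $u$ is smooth on $I$. As a sanity check, this identification of $W_{N-1}$ with $-D'/D$ is exactly the substitution used in the Remark following Corollary \ref{nicecor}, so equation \eqref{lineqforD} there is just the statement that $D$ given by \eqref{Delta} solves the associated linear second-order ODE with potential determined by $u$.
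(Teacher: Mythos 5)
Your argument is correct and is essentially the paper's own proof: both identify $r(L)=r(L_+)$ via Lemmas \ref{Lposlem} and \ref{Lmineq0}, and both obtain $W_{N-1}=-D'/D$ by differentiating the Wronskian row by row and matching the surviving determinant with the $i=N-1$ case of \eqref{defWi}. Your sign bookkeeping ($(-1)^{2N+1}=-1$) and the remark about working on an interval where $D\ne 0$ are consistent with the paper.
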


\begin{proof} From the definition of the determinant and the product rule, one easily sees that
the derivative of $D$ is given by

\begin{equation}
D' = \left|
\begin{matrix}
y_1 & \dots &y_N  \\
y_1' & \dots &y_N' \\
\dots & \dots & \dots \\
y_1^{(N-2)} & \dots & y_N^{(N-2)}  \\
y_1^{(N)} & \dots & y_N^{(N)}  \\
\end{matrix}
\right|,
\end{equation}
which, as we see from \eqref{defWi}, implies  that $W_{N-1}=-D'/D$.  Since $r(L)=r(L_+)+r(L_-)$,
the desired
result therefore follows from Lemma \ref{Lposlem} and Lemma \ref{Lmineq0}.
\end{proof}

\begin{defn}  Let $y_j$ be given by \eqref{defyix}, and assume \eqref{paramcond} holds.   Then we define
\begin{equation}
\psi^{(N)}(x)=\psi^{(N)}(x;a_1,\dots,a_N;\alpha_1,\dots,\alpha_N)=2\left(\frac{D'(y_1,\dots,y_N)}{D(y_1,\dots,y_N)}\right)'
\label{itsmat}
  \end{equation}
for all $x$ such that $D(y_1,\dots,y_N) \ne 0$.
\label{defpsin}
\end{defn}

Introducing simple time dependencies into $\psi^{(N)}$
yields a function which satisfies all the equations in the KdV
hierarchy simultaneously.

\begin{thm} Let $N \in \mathbf N$, and let $a_j$, and $\alpha_j$, $j=1,\dots,N$ be complex numbers satisfying \eqref{paramcond}.  Fix $l \in \mathbf N$, and for
$1 \le j \le N$, define the function $\tilde a_j$ by
\begin{equation}
\tilde a_j(x,t_3,t_5,t_7,\dots,t_{2l+1})=a_j \exp\left(-2(\alpha_j^3 t_3 + \alpha_j^5 t_5 +
\dots + \alpha_j^{2l+1}t_{2l+1})\right). \label{defaitilde}
\end{equation}
Then let $u$ be defined as a function of $x$, $t_3$, $t_5$,
\dots, $t_{2l+1}$ by
\begin{equation}
u= \psi^{(N)}\left(x;\tilde a_1,\dots,\tilde
a_N;\alpha_1,\dots,\alpha_N\right). \label{summarizeu2}
\end{equation}
Then for all $k \in \{1,\dots,l\}$, and at all points in its
domain of definition, $u$ satisfies the partial differential
equation
\begin{equation}
\frac{\partial u}{\partial t_{2k+1}} =
\left[\left(L^{(2k+1)/2}\right)_+,L\right]=4(-1)^{k+1}R_{2k+3}'.
 \label{usolvesKdV}
\end{equation}
\label{solitonsoln}
\end{thm}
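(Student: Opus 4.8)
The plan is to carry out the classical ``dressing'' argument. We already know that $L=\phi\partial^2\phi^{-1}$ and, by Lemma~\ref{Lmineq0} and the corollaries following it, that $r(L)=\partial^2+u$ with $u$ given by \eqref{summarizeu}; what remains is to compute how $\phi$ (equivalently $u$) evolves once each $a_j$ is replaced by the time-dependent $\tilde a_j$ of \eqref{defaitilde}. The first step is to record two differential identities for the functions $y_j=e^{\alpha_j x}+\tilde a_j e^{-\alpha_j x}$, now regarded as functions of $x,t_3,\dots,t_{2l+1}$. Since $\tilde a_j$ does not depend on $x$, we still have $\partial^2 y_j=\alpha_j^2 y_j$, so every argument in Section~\ref{sec:Nsoliton} that used only this relation goes through at each fixed set of times; in particular $r(\phi)y_j=0$, $D\not\equiv 0$, and $r(L)=\partial^2+u$ remain valid, with $D$ and $u$ now depending on the times as well. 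On the other hand, $\partial_{t_{2k+1}}\tilde a_j=-2\alpha_j^{2k+1}\tilde a_j$ gives, after a one-line computation,
\begin{equation*}
\partial_{t_{2k+1}} y_j=\bigl(\partial^{2k+1}-\alpha_j^{2k+1}\bigr) y_j\qquad(1\le k\le l);
\end{equation*}
this is the only place where the precise exponents in \eqref{defaitilde} are used.

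The second step is to identify $\partial_{t_{2k+1}} r(\phi)$. Put $B_k=\left(L^{(2k+1)/2}\right)_+$. Since $\phi$ is monic of order $N$, the operator $\phi\partial\phi^{-1}$ is monic of order $1$ and squares to $\phi\partial^2\phi^{-1}=L$, so by the uniqueness of $L^{1/2}$ we have $L^{1/2}=\phi\partial\phi^{-1}$, hence $L^{(2k+1)/2}=\phi\partial^{2k+1}\phi^{-1}$ and $L^{(2k+1)/2}\phi=\phi\partial^{2k+1}$. Therefore $\left(L^{(2k+1)/2}\right)_-\phi=\phi\partial^{2k+1}-B_k\phi$ is a \emph{differential} operator, of order $\le N-1$ because $\left(L^{(2k+1)/2}\right)_-$ has order $-1$ and $\phi$ has order $N$. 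Now differentiate the identity $r(\phi)y_j=0$ with respect to $t_{2k+1}$ and substitute the two identities above together with $r(\phi)y_j=0$ and $\phi\partial^{2k+1}=B_k\phi+\left(L^{(2k+1)/2}\right)_-\phi$; the terms involving $\alpha_j^{2k+1}$ and $r(B_k)r(\phi)$ drop out, leaving
\begin{equation*}
\bigl(\partial_{t_{2k+1}}r(\phi)\bigr) y_j=-\,r\!\left(\left(L^{(2k+1)/2}\right)_-\phi\right) y_j,\qquad j=1,\dots,N.
\end{equation*}
Both sides are differential operators of order $\le N-1$ with smooth coefficients on $I$, and they agree on $y_1,\dots,y_N$, whose Wronskian $D$ is nonzero on $I$; hence they coincide as operators. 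Since $r$ is an algebra homomorphism this reads $\partial_{t_{2k+1}} r(\phi)=-\,r\!\left(\left(L^{(2k+1)/2}\right)_-\right) r(\phi)$, i.e. $\bigl(\partial_{t_{2k+1}}r(\phi)\bigr) r(\phi)^{-1}=-\,r\!\left(\left(L^{(2k+1)/2}\right)_-\right)$.

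To conclude, differentiate $r(L)=r(\phi)\partial^2 r(\phi)^{-1}$ with respect to $t_{2k+1}$, using $\partial_{t_{2k+1}}\bigl(r(\phi)^{-1}\bigr)=-r(\phi)^{-1}\bigl(\partial_{t_{2k+1}}r(\phi)\bigr)r(\phi)^{-1}$, to obtain
\begin{equation*}
\partial_{t_{2k+1}} r(L)=\left[\bigl(\partial_{t_{2k+1}}r(\phi)\bigr) r(\phi)^{-1},\ r(L)\right]=\left[-\,r\!\left(\left(L^{(2k+1)/2}\right)_-\right),\ r(L)\right].
\end{equation*}
Because $r$ is a homomorphism, $r\!\left(L^{(2k+1)/2}\right)=r(L)^{(2k+1)/2}$ commutes with $r(L)$, so the right-hand side equals $\left[r\!\left(\left(L^{(2k+1)/2}\right)_+\right), r(L)\right]$; and since $r$ commutes with $\Res$ as well, Lemma~\ref{kdvrhs} applied to the operator $r(L)=\partial^2+u$ identifies this commutator with $4(-1)^{k+1}R_{2k+3}'$. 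As $\partial_{t_{2k+1}} r(L)$ is simply the zeroth-order operator $u_{t_{2k+1}}$, this is precisely \eqref{usolvesKdV}.

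The step I expect to demand the most care is keeping track of how the homomorphism $r$ interacts with fractional powers, differential parts, and residues — above all the identity $L^{(2k+1)/2}\phi=\phi\partial^{2k+1}$ and its corollary that $\left(L^{(2k+1)/2}\right)_-\phi$ is a genuine differential operator of order $\le N-1$. This is the counterpart of Lemma~\ref{Lmineq0} that causes the negative-order part to vanish under $r$, and it is what reduces the evolution of $u$ to the commutator identity; everything else is a matter of unwinding definitions.
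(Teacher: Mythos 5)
Your argument is correct. The paper deliberately omits a proof of Theorem \ref{solitonsoln}, referring instead to Dickey's Proposition 1.6.5(ii), and what you have written is essentially that standard dressing-method proof: the key identities $L^{1/2}=\phi\partial\phi^{-1}$, $\partial_{t_{2k+1}}y_j=(\partial^{2k+1}-\alpha_j^{2k+1})y_j$, and the Wronskian argument forcing $\partial_{t_{2k+1}}r(\phi)=-r\bigl((L^{(2k+1)/2})_-\bigr)r(\phi)$ are exactly the ingredients of the cited one-paragraph proof, and they also parallel the technique the paper itself uses in Lemma \ref{Lmineq0} and Theorem \ref{forward} (where the only point you flag as delicate, the compatibility of $r$ with monic fractional roots via uniqueness, is indeed the right thing to check and goes through).
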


\noindent \textit{Remark.}  Using the fact that multiplication of
$y_i$ by the exponential of a linear function of $x$ does not
change the value of $\psi^{(N)}$, one sees easily that \eqref{summarizeu2} can also be written in the form
$$
u=2\left(\frac{D'(\tilde y_1,\dots,\tilde y_N)}{D(\tilde y_1,\dots,\tilde y_N)}\right)',
$$
where $\tilde y_j(x)=e^{w_j} + a_j e^{-{w_j}}$ and $w_j=\alpha_j x + \alpha_j^3 t_3 + \alpha_j^5 t_5 + \dots + \alpha_j^{2l+1}t_{2l+1}$.

\bigskip

We will not need to make use of Theorem \ref{solitonsoln} in the present paper, and so do not include a proof here.
But the reader may be interested to know that, using the tools defined in Section 2 above, a one-paragraph proof
can be given.  It may be found in \cite{Di}, where it appears as the proof of part (ii) of Proposition 1.6.5 in \cite{Di},
or in \cite{Di2} as the proof of Proposition 1.7.5.

We are concerned here, rather, with the fact the functions $\psi^{(N)}$ satisfy stationary equations of the form \eqref{stathier}:

\begin{thm}
Let $\psi^{(N)}$ be as in Definition \ref{defpsin}, and define
constants $s_0$, $s_1$, \dots, $s_N$ by
\begin{equation}
s_0 + s_1 x + s_2 x^2 + \dots +s_N x^N =
(x-\alpha_1^2)(x-\alpha_2^2)\cdots(x-\alpha_N^2); \label{defsi}
\end{equation}
in other words, $s_i$ is the $i^{\text th}$ elementary symmetric
function of $N$ variables, evaluated on $-\alpha_1^2$, \dots,
$-\alpha_N^2$. Then, on each interval of its domain of definition,
the function
\begin{equation*}
u(x) = \psi^{(N)}(x;a_1,\dots,a_N;\alpha_1,\dots,\alpha_N)
\end{equation*}
 satisfies the
ordinary differential equation in $x$ given by
\begin{equation}
s_0R_3 - s_1 R_5 + s_2 R_7 + \dots + (-1)^N s_N R_{2N+3} = C,
\label{almostsolitonequation}
\end{equation}
where $C$ is a constant.
 \label{forward}
\end{thm}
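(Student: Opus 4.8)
The plan is to identify the left-hand side of \eqref{almostsolitonequation} with, up to a constant factor, the residue of a single pseudo-differential operator built from $L$, and then to use the dressing representation \eqref{genL} to show that this operator is in fact purely differential, so that its residue vanishes. Write $P(x) = (x-\alpha_1^2)\cdots(x-\alpha_N^2) = \sum_{i=0}^N s_i x^i$ for the polynomial of \eqref{defsi}. Substituting the defining formula $R_{2k+1} = \tfrac{(-1)^k}{2}\Res L^{(2k-1)/2}$ with $k = i+1$, and using $L^{(2i+1)/2} = L^{i}L^{1/2}$, one gets
\begin{equation*}
\sum_{i=0}^N (-1)^i s_i R_{2i+3} = -\frac12 \sum_{i=0}^N s_i \Res L^{(2i+1)/2} = -\frac12\,\Res\bigl(P(L)\,L^{1/2}\bigr),
\end{equation*}
so the theorem reduces to showing that, when $L$ is attached as in \eqref{genL} to the $y_j$ of \eqref{defyix}, the quantity $\Res\bigl(P(L)L^{1/2}\bigr)$ is constant on each interval of its domain (in fact we will see it vanishes identically there).

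Next I would pass to the concrete operators on an interval $I$ with $D \ne 0$ throughout, writing $\Phi := r(\phi)$ for the monic differential operator of order $N$ in \eqref{defphi}; note $\Phi$ is invertible in the algebra of pseudo-differential operators with coefficients smooth on $I$. By \eqref{genL} we have $r(L) = \Phi\,\partial^2\,\Phi^{-1}$, while the corollary to Lemma \ref{Lmineq0} together with Definition \ref{defpsin} gives $r(L) = \partial^2 + \psi^{(N)}$. Since $r$ is an algebra homomorphism that commutes with the formation of fractional powers and with $\Res$ (each of these operations being fixed by a leading-coefficient normalization, or else by reading off a single coefficient), the formal identities $L^{1/2} = \phi\,\partial\,\phi^{-1}$ and $P(L) = \phi\,P(\partial^2)\,\phi^{-1}$ yield, after applying $r$,
\begin{equation*}
r\bigl(P(L)\,L^{1/2}\bigr) = \Phi\,P(\partial^2)\,\partial\,\Phi^{-1} = \Phi\,\partial\,\bigl(P(\partial^2)\,\Phi^{-1}\bigr),
\end{equation*}
the last equality because the polynomial $P(\partial^2)$ commutes with $\partial$.

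The crux is that $P(\partial^2)$ is divisible on the right by $\Phi$, so $P(\partial^2)\Phi^{-1}$ is again a differential operator. Indeed each $\alpha_j^2$ is a root of $P$, so $P(\partial^2)y_j = P(\alpha_j^2)y_j = 0$ for $j = 1,\dots,N$, whereas $\Phi y_j = 0$ by \eqref{phiyieq0}; dividing $P(\partial^2) = Q\Phi + S$ with $\operatorname{ord} S < N$ (possible since $\Phi$ is monic), the remainder $S$ kills the $N$ linearly independent functions $y_1,\dots,y_N$ on $I$, hence $S = 0$. Thus $Q := P(\partial^2)\Phi^{-1}$ is a (monic, order $N$) differential operator, and
\begin{equation*}
r\bigl(P(L)\,L^{1/2}\bigr) = \Phi\,\partial\,Q
\end{equation*}
is a product of differential operators, hence a differential operator, whose residue is $0$. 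Therefore $\sum_{i=0}^N (-1)^i s_i R_{2i+3}$ vanishes identically on $I$; since every point of the domain of $\psi^{(N)}$ lies in such an interval, the theorem follows, with $C = 0$.

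The step I expect to be the genuine obstacle is not the computation but two recognitions: first, that the alternating combination of the $R_{2i+3}$ with the elementary symmetric functions of the $\alpha_j^2$ as coefficients is exactly the one assembling into $P(L)L^{1/2}$; and second, that the right-divisibility of $P(\partial^2)$ by $\phi$ is forced by their common kernel $\operatorname{span}\{y_1,\dots,y_N\}$. Once these are in place the argument is short. The remaining points are routine but must be stated carefully: (i) the sign and index bookkeeping relating $R_{2k+1}$ to $\Res L^{(2k-1)/2}$; (ii) the compatibility of $r$ with fractional powers and with $\Res$, which licenses transporting identities from the formal operator $\phi$ to its concrete realization on $I$; and (iii) the invertibility of $\Phi$, with coefficients smooth on all of $I$, so that every manipulation above takes place inside the algebra of pseudo-differential operators with smooth coefficients on $I$.
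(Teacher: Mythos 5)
Your proof is correct, but it takes a genuinely different route from the paper's. The paper argues dynamically: it introduces the time variables $t_3,\dots,t_{2N+1}$ via \eqref{defaitilde}, applies $\tilde\partial = s_0\partial+s_1\partial_3+\dots+s_N\partial_{2N+1}$ to the identity $r(\phi)y_i=0$, uses $s_0\alpha_i+s_1\alpha_i^3+\dots+s_N\alpha_i^{2N+1}=\alpha_iP(\alpha_i^2)=0$ to kill $\tilde\partial y_i$, deduces $\tilde\partial W_{N-1}=0$, and then converts each $\partial_{2k+1}W_{N-1}$ into $2(-1)^kR_{2k+3}+C_k$ by invoking the hierarchy equations \eqref{usolvesKdV} --- that is, Theorem \ref{solitonsoln}, which the paper cites from Dickey without proof. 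Your argument stays entirely within the stationary, algebraic framework: you assemble the left-hand side into $-\tfrac12\Res\bigl(P(L)L^{1/2}\bigr)$, transport $P(L)L^{1/2}=\phi\,P(\partial^2)\,\partial\,\phi^{-1}$ through $r$, and show $P(\partial^2)$ is right-divisible by $r(\phi)$ because both annihilate $y_1,\dots,y_N$ and the Wronskian is nonvanishing on $I$, so the whole expression is a differential operator with zero residue. The same common-kernel observation is the engine of both proofs, but your version buys two things: it does not lean on the unproved Theorem \ref{solitonsoln}, and it gives $C=0$ on every interval of the domain, a conclusion the paper obtains only for decaying profiles in remark (iv) following Definition \ref{defNsol} by letting $x\to\infty$. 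The points you flag as routine are indeed so and check out: $r$ commutes with inversion, with fractional powers, and with $\Res$ by uniqueness of the leading-coefficient-normalized solutions of the defining recursions; and right division by the monic operator $r(\phi)$ with smooth coefficients on $I$ leaves a remainder of order less than $N$ annihilating all the $y_j$, which forces it to vanish since $D\ne 0$ on $I$.
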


\begin{proof}
Define $\tilde a_i$ by \eqref{defaitilde} with $l=N$, and for $i
= 1,\dots, N$ extend $y_i$ to be a function of $x, t_3, t_5,
\dots, t_{2N+1}$ by replacing $a_i$ with $\tilde a_i$ in
\eqref{defyix}. That is, set
\begin{equation}
y_i = \exp(\alpha_i x) + a_i \exp\left[-(\alpha_i x + 2\alpha_i^3
t_3 + 2 \alpha_i^5 t_5 + \dots +2 \alpha_i^{2N+1}
t_{2N+1})\right]. \label{extyi}
\end{equation}
Also extend $u$ to be a function of $x, t_3, t_5, \dots, t_{2N+1}$
by \eqref{summarizeu2}.  With $\phi$ and $W_i$ defined in terms of
$y_i$ as before, we have as in \eqref{summarizeu} that
\begin{equation}
u=-2W_{N-1}'. \label{relateutow}
\end{equation}
   For $1\le k \le N$, let $\partial_{2k+1}$
denote differentiation with respect to $t_{2k+1}$.

For each $i$ from 1 to $N$, we apply to both sides of
\eqref{phiyieq0} the operator
\begin{equation*}
\tilde \partial = s_0 \partial + s_1 \partial_3 + s_2
\partial_5 + \dots + s_N \partial_{2N+1}.
\end{equation*}
There results the identity
\begin{equation}
 0=(\tilde\partial r(\phi)) y_i +r(\phi)(\tilde\partial y_i),
 \label{prodrule}
\end{equation}
where
\begin{equation*}
\tilde\partial r(\phi) = (\tilde\partial
W_{N-1})\partial^{N-1}+(\tilde\partial W_{N-2})\partial^{N-2}+
\dots + (\tilde\partial W_1)\partial + \tilde\partial W_0.
\end{equation*}
But for all $k = 1,\dots, N$ we have from \eqref{extyi} that
$$
\partial_{2k+1} y_i = \alpha_i^{2k+1} z_i,
$$
where
\begin{equation*}
z_i= -2a_i \exp\left[-(\alpha_i x + 2\alpha_i^3 t_3 + 2 \alpha_i^5
t_5 + \dots +2 \alpha_i^{2N+1} t_{2N+1})\right].
\end{equation*}
Therefore
\begin{equation} \tilde
\partial y_i = \left(s_0\alpha_i + s_1 \alpha_i^3+ s_2\alpha_i^5 +
\dots + s_N \alpha_i^{2N+1}\right)z_i. \label{tricky}
\end{equation}
It follows from \eqref{defsi} and \eqref{tricky} that
$\tilde\partial y_i=0$, and so, by \eqref{prodrule},
\begin{equation*}
(\tilde\partial r(\phi)) y_i = 0.
\end{equation*}

Now $\tilde\partial r(\phi)$  is a linear differential operator
in $x$ of order $N-1$ or less, so as in the proof of Lemma
\ref{Lmineq0}, the fact that it takes all the functions $y_1,
\dots, y_N$ to zero means that all its coefficients must be
identically zero. In particular, $\tilde\partial W_{N-1} = 0$, or,
in other words,
\begin{equation}
s_0 \partial W_{N-1} +s_1\partial_3 W_{N-1} +  \dots + s_N
\partial_{2N+1} W_{N-1}=0.
\label{eqnforw}
\end{equation}

On the other hand, \eqref{usolvesKdV} and \eqref{relateutow} tell
us that
\begin{equation}
-2\partial_{2k+1}W_{N-1}'=4(-1)^{k+1}R_{2k+3}'
\label{wpsolveskdvp}
\end{equation}
for $1 \le k \le l$.  Integrating \eqref{wpsolveskdvp} with
respect to $x$, we obtain
\begin{equation}
\partial_{2k+1}W_{N-1}=2(-1)^k R_{2k+3}+C_k,
 \label{wsolvesKdV}
\end{equation}
for $1 \le k \le l$, where $C_k$ is a constant of integration.
Moreover, from \eqref{Rrecur} we have $R_3 = -u/4$, and hence
\begin{equation}
\partial W_{N-1}=W_{N-1}'=2R_3. \label{wpR3}
\end{equation}
Substituting \eqref{wsolvesKdV} and \eqref{wpR3} into
\eqref{eqnforw} then gives \eqref{almostsolitonequation}.
\end{proof}

In general, $\psi^{(N)}$ will have singularities at points where
the denominator $D$ in \eqref{defphi} is equal to zero, but away
from these points, $\psi^{(N)}$ is a smooth, and in fact analytic,
function of its arguments.   Our next task is to determine
conditions on the parameters $\alpha_i$ and $a_i$ under which $D$
has no zeros on $\mathbf R$, or equivalently under which
$\psi^{(N)}$ is a smooth function on all of $\mathbf R$.

For this purpose it will be useful to represent $D$ as an
explicit sum of exponential functions.  For given $N
\in \mathbf N$, let $\{-1,1\}^N$ denote the set of functions
$\epsilon$ from $\{1,\dots,N\}$ to $\{-1,1\}$; thus $\{-1,1\}^N$
has cardinality $2^N$.  For $\epsilon \in \{-1,1\}^N$, we denote
the image of $j$ under $\epsilon$ by $\epsilon_j$, and define
$S(\epsilon)$ to be the set of all $j
\in \{1,\dots,N\}$ such that $\epsilon_j = -1$. Also, for any
ordered $N$-tuple $(r_1,\dots,r_N)$, let
\begin{equation*}
V(r_1,\dots,r_N)= \det\{r_i^{j-1}\}_{i,j=1,N}=\prod_{1 \le i < j
\le N} (r_j-r_i)
\end{equation*}
be the corresponding Vandermonde determinant.  Then expansion of the determinant in \eqref{Delta} yields
the formula
\begin{equation}
D=\sum_{\epsilon \in \{-1,1\}^N}\left[ \exp\left(\sum_{j=1}^N
\epsilon_j \alpha_j x\right)\left(\prod_{j\in
S(\epsilon)}a_j\right)
V(\epsilon_1\alpha_1,\dots,\epsilon_N\alpha_N)\right].
\label{firstforD}
\end{equation}

\begin{lem}
Let $y_j$ be given by \eqref{defyix}, and suppose that
\eqref{paramcond} holds.   Suppose in addition that for all $j \in
\{1,\dots,N\}$, $\Re \alpha_j > 0$.  Then $\psi^{(N)}$ and all of its
derivatives are defined for all sufficiently large $|x|$, and approach zero
exponentially fast as $|x| \to \infty$.
\label{psiatinf}
 \end{lem}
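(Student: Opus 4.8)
The plan is to read off the asymptotics of $D$ directly from the expansion \eqref{firstforD} as a finite sum of exponentials, and then differentiate the logarithmic derivative. Consider $x \to +\infty$. The exponent $\sum_{j} \epsilon_j \alpha_j x$ attached to $\epsilon \in \{-1,1\}^N$ in \eqref{firstforD} has real part $x\sum_j \epsilon_j \Re\alpha_j$; since every $\Re\alpha_j > 0$, this is strictly largest for $\epsilon \equiv 1$, and for every other $\epsilon$ it is at most $x\bigl(\sum_j \Re\alpha_j - \delta\bigr)$ where $\delta := 2\min_j \Re\alpha_j > 0$. The coefficient of the dominant term $\epsilon \equiv 1$ is the Vandermonde determinant $V(\alpha_1,\dots,\alpha_N)$, which is nonzero because the $\alpha_j$ are distinct by \eqref{paramcond}(ii). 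Hence $D(x) = e^{\lambda x} g(x)$ with $\lambda = \sum_j \alpha_j$ and $g(x) = c + r(x)$, where $c = V(\alpha_1,\dots,\alpha_N) \ne 0$ and $r$ is a finite sum of exponentials $e^{\mu x}$ with $\Re\mu \le -\delta$. Thus $r$ and all its derivatives are $O(e^{-\delta x})$ as $x \to +\infty$, so $|g(x)| \ge |c|/2$ and in particular $D(x) \ne 0$ for all large $x$; this already gives that $\psi^{(N)} = 2(D'/D)'$ and all its derivatives are defined there.

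For the decay, write $D'/D = \lambda + g'/g$, so that $\psi^{(N)} = 2(g'/g)'$ and, more generally, $(\psi^{(N)})^{(m)} = 2(g'/g)^{(m+1)}$. Since $g^{(j)} = r^{(j)}$ for $j \ge 1$, an easy induction based on the identity $(g^{(j)}/g)' = g^{(j+1)}/g - (g^{(j)}/g)(g'/g)$ shows that each $(g'/g)^{(k)}$ is a polynomial with zero constant term in the functions $g^{(j)}/g = r^{(j)}/(c+r)$, $j \ge 1$. Each of these functions is bounded and $O(e^{-\delta x})$ for large $x$, so every monomial occurring — being a product of at least one of them — is $O(e^{-\delta x})$, and therefore so is $(\psi^{(N)})^{(m)}$ for every $m$. (Alternatively one may note $\psi^{(N)} = 2(\log g)''$ for large $x$ and invoke the chain rule.) The situation as $x \to -\infty$ is entirely symmetric: there the dominant term in \eqref{firstforD} is $\epsilon \equiv -1$, whose coefficient $\bigl(\prod_{j=1}^N a_j\bigr) V(-\alpha_1,\dots,-\alpha_N)$ is nonzero by \eqref{paramcond}(i) and distinctness of the $\alpha_j$; writing $D(x) = e^{-\lambda x}(\tilde c + \tilde r(x))$ with $\tilde c \ne 0$ and $\tilde r$ a finite sum of $e^{\nu x}$ with $\Re\nu \ge \delta$, the same computation gives that $\psi^{(N)}$ and all its derivatives are defined and $O(e^{-\delta |x|})$ as $x \to -\infty$. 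Combining the two ends completes the proof.

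I do not expect a serious obstacle here; the one point requiring care is that $D$ is complex-valued in general, so the comparison of exponential terms must be carried out with the real parts of the exponents rather than with moduli — this is the only place where the hypothesis $\Re\alpha_j > 0$ (rather than $\alpha_j$ real and positive) is actually used, and it is also what forces the decay rate to be governed by $\min_j \Re\alpha_j$. A secondary bookkeeping point is that the statement asks for all derivatives of $\psi^{(N)}$, not just $\psi^{(N)}$ itself, which is why the argument is organized around the logarithmic derivative $g'/g$ and the observation that its derivatives remain polynomials, with zero constant term, in the exponentially small quantities $g^{(j)}/g$.
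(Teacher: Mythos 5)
Your proof is correct and follows essentially the same route as the paper's: both read the asymptotics off the explicit exponential expansion \eqref{firstforD}, identify the dominant term $V(\alpha_1,\dots,\alpha_N)e^{(\sum_j\alpha_j)x}$ (nonzero coefficient by distinctness of the $\alpha_j$ and $a_j\ne 0$), and conclude nonvanishing of $D$ and exponential decay of $2(D'/D)'$ at both ends. Your factorization $D=e^{\lambda x}(c+r)$ and the induction showing each derivative of $g'/g$ is a zero-constant-term polynomial in the $O(e^{-\delta x})$ quantities $g^{(j)}/g$ merely makes explicit the ``all derivatives'' step that the paper dispatches with ``it follows easily.''
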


 \begin{proof}
Because $\Re \alpha_j > 0$ for all $j$, the dominant term in
\eqref{firstforD} is $Ve^{Sx}$, where
$V=V(\alpha_1,\dots,\alpha_N)$ and $S=\sum_{j=1}^N \alpha_j$:  all
other terms have exponents with smaller real parts.  In particular,
we have $D(x) \ne 0$ whenever $|x|$ is sufficiently large.  The
dominant terms in $D'(x)$ and $D''(x)$ are $VSe^{Sx}$ and
$VS^2e^{Sx}$, respectively; and so in the expression
$\psi^{(N)}(x)=2(DD''-(D')^2)/D^2$, the denominator has dominant
term $V^2e^{2Sx}$, while the coefficient of $e^{2Sx}$ in the
numerator is zero.  It follows easily that $\psi^{(N)}(x)$,
together with all its derivatives, tends to zero exponentially
fast as $x \to \infty$.  A similar argument applies as $x \to
-\infty$ (where the dominant term in $D(x)$ is $|a_1\cdots
a_NV|e^{|Sx|}$).
\end{proof}

\begin{lem}
Let $y_j$ be given by \eqref{defyix}, and suppose that
\eqref{paramcond} holds.   Suppose also that
for all $j \in \{1,\dots,N\}$, $\alpha_j$ and $a_j$ are real,
and
\begin{equation}
(-1)^{j-1}a_j > 0. \label{signai}
\end{equation}
 Then $\psi^{(N)}(x) \in H^1(\mathbf R)$.
 \label{condforDpos}
\end{lem}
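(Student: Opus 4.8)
The plan is to verify the two ingredients of membership in $H^1(\mathbf R)$ separately: that $\psi^{(N)}$ has no singularities (equivalently, $D(x)\ne 0$ for every real $x$), and that $\psi^{(N)}$ together with its first derivative is square integrable. The second ingredient is almost immediate from what is already available. Since the $\alpha_j$ are real and nonzero, condition \eqref{paramcond} forces $0<\alpha_1<\alpha_2<\dots<\alpha_N$; in particular every $\Re\alpha_j>0$, so Lemma \ref{psiatinf} applies and $\psi^{(N)}$ and all of its derivatives decay exponentially as $|x|\to\infty$. Once we know $D$ never vanishes, $\psi^{(N)}=2(D'/D)'$ is a ratio of analytic functions with nonvanishing denominator, hence is smooth and bounded on every compact set; combining this with the exponential decay gives $\psi^{(N)}\in H^1(\mathbf R)$ (indeed $\psi^{(N)}\in H^k(\mathbf R)$ for every $k$).

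The real work, and the step I expect to be the main obstacle, is proving $D(x)\ne 0$ for all real $x$. Here I would use the explicit expansion \eqref{firstforD},
\begin{equation*}
D=\sum_{\epsilon\in\{-1,1\}^N} c_\epsilon \exp\left(\sum_{j=1}^N \epsilon_j\alpha_j x\right),\qquad c_\epsilon=\left(\prod_{j\in S(\epsilon)}a_j\right)V(\epsilon_1\alpha_1,\dots,\epsilon_N\alpha_N),
\end{equation*}
and show that under the sign hypothesis \eqref{signai} every coefficient $c_\epsilon$ is strictly positive. Since each exponential is positive, this immediately yields $D(x)>0$ for all $x$, hence no singularities.

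To determine the sign of $c_\epsilon$, I would analyze its two factors separately. From \eqref{signai}, $\sign a_j=(-1)^{j-1}$, so $\sign\bigl(\prod_{j\in S(\epsilon)}a_j\bigr)=(-1)^{\sum_{j\in S(\epsilon)}(j-1)}$. For the Vandermonde factor $V(\epsilon_1\alpha_1,\dots,\epsilon_N\alpha_N)=\prod_{1\le i<j\le N}(\epsilon_j\alpha_j-\epsilon_i\alpha_i)$, the key observation is that, because $0<\alpha_1<\dots<\alpha_N$, for each pair $i<j$ the factor $\epsilon_j\alpha_j-\epsilon_i\alpha_i$ has the same sign as $\epsilon_j$: when $\epsilon_j=1$ it equals $\alpha_j-\alpha_i>0$ or $\alpha_j+\alpha_i>0$, and when $\epsilon_j=-1$ it equals $-\alpha_j-\alpha_i<0$ or $\alpha_i-\alpha_j<0$. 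Therefore $\sign V=\prod_{1\le i<j\le N}\epsilon_j=\prod_{j=1}^N\epsilon_j^{\,j-1}=(-1)^{\sum_{j\in S(\epsilon)}(j-1)}$, which is exactly the sign of the $a_j$-product; the two cancel and $c_\epsilon>0$. The only subtle point in all this is the bookkeeping with the ordering of the $\alpha_j$ and the realization that the sign of each Vandermonde factor depends on $\epsilon_j$ alone, not on $\epsilon_i$; once that is seen, the conclusion follows by a short count of exponents.
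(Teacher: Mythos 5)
Your proposal is correct and follows essentially the same route as the paper: expand $D$ via \eqref{firstforD}, show each Vandermonde factor $\epsilon_j\alpha_j-\epsilon_i\alpha_i$ has the sign of $\epsilon_j$ so that $\sign V=\prod_{j\in S(\epsilon)}(-1)^{j-1}$ cancels the sign of $\prod_{j\in S(\epsilon)}a_j$, conclude $D>0$ everywhere, and then invoke Lemma \ref{psiatinf} for the decay. The sign bookkeeping matches the paper's argument exactly.
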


\begin{proof}
Suppose that the $\alpha_j$ and $a_j$ are real and \eqref{signai}
holds. Then for $1 \le j < k \le N$ the factor
$\epsilon_k\alpha_k-\epsilon_j\alpha_j$ in
$V(\epsilon_1\alpha_1,\dots,\epsilon_N\alpha_N)$ has the same sign
as $\epsilon_k$.  For a given $k$, there are $k-1$ such factors in
$V(\epsilon_1\alpha_1,\dots,\epsilon_N\alpha_N)$, corresponding to
the values $1 \le j \le k-1$, so the sign of
$V(\epsilon_1\alpha_1,\dots,\epsilon_N\alpha_N)$ is $\prod_{k\in
S(\epsilon)}(-1)^{k-1}$.  It then follows from \eqref{signai} that
the coefficient of each exponential in \eqref{firstforD} is
positive.  Hence $D>0$ for all $x \in \mathbf R$, and it follows
that $\psi^{(N)}(x)$ is well-defined and smooth on all of $\mathbf
R$.  Then from Lemma \ref{psiatinf} it follows that $\psi^{(N)}
\in H^1$.
 \end{proof}

\begin{lem}  Let $y_j$ be given by \eqref{defyix}, and suppose that \eqref{paramcond} holds.
Suppose also that for each $j \in \{1,\dots,N\}$, either (i) $\alpha_j$ and $a_j$ are real, (ii)
$\alpha_j$ is purely imaginary and $|a_j|=1$, or (iii) there exists $k \in \{1,\dots,N\}$ such that
$\alpha_k = \alpha_j^\ast$ and $a_k = a_j^\ast$. (These conditions can be summarized by saying that
the numbers $\alpha_j^2$ and $(\log a_j)^2$ are either real and of the same sign, or occur in complex conjugate pairs.)
Then $\psi^{(N)}(x)$ is real-valued at all points where it is defined.
\label{psireal}
\end{lem}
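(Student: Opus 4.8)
The plan is to reduce the statement to the single fact that, under the stated hypotheses, the Wronskian $D=D(y_1,\dots,y_N)$ satisfies $\overline{D}=cD$ for some nonzero constant $c$, where $\overline{D}$ denotes the function $x\mapsto\overline{D(x)}$. This reduction is immediate: for real $x$, complex conjugation commutes both with differentiation in $x$ and with the formation of quotients, so on any interval where $D\ne0$ we obtain, using \eqref{itsmat},
\begin{equation*}
\overline{\psi^{(N)}(x)}=2\Bigl(\tfrac{(\overline{D})'}{\overline{D}}\Bigr)'(x)=2\Bigl(\tfrac{(cD)'}{cD}\Bigr)'(x)=2\Bigl(\tfrac{D'}{D}\Bigr)'(x)=\psi^{(N)}(x).
\end{equation*}

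To produce such a $c$, first note that $D$ is a polynomial with integer coefficients in $y_1,\dots,y_N$ and their derivatives, so (conjugation commuting with $d/dx$ at real points) $\overline{D(x)}=D(\overline{y_1},\dots,\overline{y_N})(x)$, the Wronskian of the conjugated functions $\overline{y_j}(x)=e^{\overline{\alpha_j}x}+\overline{a_j}e^{-\overline{\alpha_j}x}$. The key step is to check, hypothesis by hypothesis, that each $\overline{y_j}$ lies in $V:=\operatorname{span}_{\mathbf C}\{y_1,\dots,y_N\}$: in case (i) one computes directly $\overline{y_j}=y_j$; in case (ii), using $|a_j|=1$ (i.e.\ $\overline{a_j}=a_j^{-1}$), one gets $\overline{y_j}=\overline{a_j}\,y_j$; and in case (iii) one gets $\overline{y_j}=y_k$. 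Hence $\overline{y_i}=\sum_m B_{im}y_m$ for some constants $B_{im}$, and multilinearity of the Wronskian in its argument functions yields $\overline{D}=D(\overline{y_1},\dots,\overline{y_N})=\det(B)\,D$. On any interval where $\psi^{(N)}$ is defined we have $D\ne0$, so evaluating the identity $\overline{D}=\det(B)\,D$ at such a point shows $\det(B)\ne0$ (equivalently, conjugating the identity gives $D=|\det B|^2D$, so $|\det B|=1$); this is the constant $c$ required above. On intervals where $D\equiv0$ the function $\psi^{(N)}$ is undefined and there is nothing to prove.

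I do not anticipate a serious obstacle here. The only slightly delicate point is that the three hypotheses (i)--(iii) may both apply to the same index $j$ (for instance an index may satisfy both (ii) and (iii)), so rather than attempting to assemble the per-index identities into a single permutation of $\{1,\dots,N\}$, it is cleanest to record only the weaker conclusion $\overline{y_j}\in V$ and then invoke multilinearity of the Wronskian. One should also keep in mind that what is genuinely needed is that $\overline{D}$ be a nonzero constant multiple of $D$, not merely that the two functions share a zero set; the computation above is precisely what supplies this stronger statement, together with the automatic nonvanishing of $\det B$ on the domain of $\psi^{(N)}$.
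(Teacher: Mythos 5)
Your proposal is correct and follows essentially the same route as the paper: the paper's proof likewise verifies $y_j^\ast = y_j$, $y_j^\ast = (1/a_j)y_j$, or $y_j^\ast = y_k$ in cases (i)--(iii) respectively, concludes that $D^\ast$ is a nonzero constant multiple of $D$, and deduces $(\psi^{(N)})^\ast = \psi^{(N)}$. Your write-up merely makes explicit the multilinearity argument and the nonvanishing of $\det(B)$ that the paper leaves as ``it follows easily.''
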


\begin{proof} In case (i) we have $y_j^\ast = y_j$, in case (ii) we have $y_j^\ast = (1/a_j)y_j$, and in case (iii) we have
$y_j^\ast = y_k$.  It follows easily that the conjugate $D^\ast$
of $D=D(y_1,\dots,y_N)$ is equal to a constant times $D$ itself.
Therefore $(\psi^{(N)})^\ast = \psi^{(N)}$.
\end{proof}

\begin{defn} We say that $\psi^{(N)}(x;a_1,\dots,a_N;\alpha_1,\dots,\alpha_N)$ is an $N$-soliton profile
if $\psi^{(N)}(x)$ is real-valued for all $x \in \mathbf R$, and $\psi^{(N)}(x) \in H^1(\mathbf R)$.
The corresponding time-dependent functions given by
\eqref{summarizeu2} are called $N$-soliton solutions of the KdV
hierarchy. The numbers $\alpha_1$, \dots, $\alpha_N$ are called
the wavespeeds of the $N$-soliton solution. \label{defNsol}
\end{defn}

\noindent
 \textit{Remarks.}
(i) At least in the case when $N=2$, it can be shown that, given
that the conditions in \eqref{paramcond} hold for $a_j$ and
$\alpha_j$, the hypotheses on $a_j$ and $\alpha_j$ in
 Lemmas \ref{condforDpos} and \ref{psireal} are not only sufficient for $\psi^{(N)}$ to be an $N$-soliton profile according to the above definition, but also necessary.
 We conjecture that these conditions on $a_j$ and $\alpha_j$
 are also necessary in the case of general $N$, although we have not proved this yet. \\
  \indent (ii) If the conjecture in the preceding remark is true, it then follows from Lemma \ref{psiatinf} that $N$-soliton profiles,
together with all their derivatives, approach zero exponentially fast as $|x| \to \infty$.\\
\indent (iii) By transforming the index in the outermost sum of \eqref{firstforD}
from $\epsilon$ to $\mu$, where $\mu_j=\frac12(\epsilon_j+1)$ for
$j \in \{1,\dots,N\}$, one can rewrite $D$ in the form $D =
e^{px+q} D_1$, where $p$ and $q$ are constants and
\begin{equation*}
D_1=\sum_{\mu \in \{0,1\}^N} \exp\left(\sum_{i=1}^N 2\mu_i
\alpha_i(x+\zeta_i)+\sum_{1 \le i < j \le n} \mu_i\mu_j
A_{ij}\right),
\end{equation*}
where $\zeta_i$ and $A_{ij}$ are real constants. Explicitly, one
has
\begin{equation*}
\begin{aligned}
e^{px+q}&=\exp\left(-\Sigma_{i=1}^N \alpha_i
x\right)V(\alpha_1,\dots,\alpha_N) \Pi_{j=1}^N |a_j|,\\
\zeta_i
&=\frac{1}{2\alpha_i}\log\left|\frac{V(\alpha_1,\dots,\alpha_{i-1},-\alpha_i,\alpha_{i+1},\dots,\alpha_N)}{a_i
V(\alpha_1,\dots,\alpha_N)}\right|,\quad (i=1,\dots,N)\\
A_{ij}&=2\log\left|\frac{\alpha_j-\alpha_i}{\alpha_j+\alpha_i}\right|, \quad(i,j =1,\dots, N).
\end{aligned}
\end{equation*}
(Here use has been made of the assumption \eqref{signai}.) Writing
\begin{equation*}
\psi^{(N)}=2(D_1'/D_1)',
\end{equation*}
one obtains the formula for $N$-soliton profile found in \cite{MS} or
on page 55 of \cite{H2}.\\
\indent (iv) If $\psi^{(N)}$ is an $N$-soliton solution, then the constant $C$ in equation \eqref{almostsolitonequation}
is equal to zero; that is,
\begin{equation}
s_0R_3 - s_1 R_5 + s_2 R_7 + \dots + (-1)^N s_N R_{2N+3} = 0
\label{solitonequation}
\end{equation}
on $\mathbf R$.  This is seen by taking
the limit of \eqref{almostsolitonequation} as $x \to \infty$, and observing that, for each
$k \ge 1$,  $R_{2k+1}$   is a differential polynomial in $u=\psi^{(N)}$ and its derivatives, with no constant term.

\section{The stationary equation for $N=1$}
\label{sec:1solconverse}

To set the stage for the analysis of \eqref{stathier} in the case
$N=2$, we now discuss the case $N=1$.  The result we prove in this
section, Theorem \ref{1solthm}, is a standard exercise in
elementary integration, but writing out the proof in detail will
serve to introduce the notation we use for the more complicated
computations of the next section.

From \eqref{R1357}, we have that in the case when $N=1$, \eqref{stathier} is given by
 \begin{equation}
\frac{d_1}{2} -\frac{d_3}{4}(u) + \frac{d_5}{16} (u''+3u^2)=0.
\label{stathier1}
\end{equation}

Suppose $d_1$, $d_3$, and $d_5$ are given real numbers, and
suppose $u \in L^2$ is a real-valued solution of
the ordinary differential equation \eqref{stathier1} (in the sense
of distributions).  Then from \eqref{stathier1} we see that $d_5$
must be nonzero, and by dividing by $d_5$ if necessary, we can
assume that $d_5 = 1$. Also, multiplying both sides of
\eqref{stathier1} by a test function $\phi_\tau(x)=\phi(x-\tau)$,
where $\int_{\mathbf R}\phi(x)\ dx = 1$, and letting $\tau \to
\infty$, we arrive at the conclusion that
\begin{equation}
\lim_{\tau \to \infty} \int_{\mathbf R} u \phi_\tau'' = \lim_{\tau
\to \infty}\int_{\mathbf R} u'' \phi_\tau = \lim_{\tau \to
\infty}\int_{\mathbf R}(-3u^2 +4d_3 u -8d_1) \phi_\tau  = - 8d_1,
\label{d1eq0}
\end{equation}
from which it follows that $d_1 = 0$.  Letting $C=d_3$, we can
then write \eqref{stathier1} as
\begin{equation}
C\left(\frac{-u}{4}\right)+\frac{1}{16}(u''+3u^2)= 0.
\label{1soleqn}
\end{equation}

\begin{lem}  Suppose $u \in L^2$ is a solution of
\eqref{1soleqn} in the sense of distributions. Then $u$ must be in
$H^s$ for all $s \ge 0$, and $u$ is analytic on $\mathbf R$.
\label{regularity1}
\end{lem}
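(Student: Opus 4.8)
The plan is to run a standard elliptic-type bootstrap on the ODE, which after clearing the factor $1/16$ reads
\[
u'' = 4Cu - 3u^2 .
\]
First I would promote $u$ from $L^2$ to $C^\infty$. Since $u\in L^2_{\mathrm{loc}}$ we have $u^2\in L^1_{\mathrm{loc}}$, so the right-hand side of the ODE lies in $L^1_{\mathrm{loc}}$; hence $u''\in L^1_{\mathrm{loc}}$, which forces $u'$ to be (locally) absolutely continuous and therefore $u\in C^1(\mathbf R)$. In particular $u$ is continuous, so $4Cu-3u^2$ is continuous, so $u''\in C^0$ and $u\in C^2$. Iterating (each time $u\in C^k$ gives $u'' = 4Cu-3u^2\in C^k$, hence $u\in C^{k+2}$) yields $u\in C^\infty(\mathbf R)$. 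Analyticity then follows because, writing $\mathbf v=(u,u')$, $u$ solves the autonomous first-order system $\mathbf v' = F(\mathbf v)$ with $F(a,b)=(b,\,4Ca-3a^2)$ a polynomial, hence real-analytic, vector field; by the classical theorem that solutions of an ODE with real-analytic right-hand side are themselves real-analytic, $u$ is real-analytic on $\mathbf R$.

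It remains to show $u\in H^s(\mathbf R)$ for all $s\ge 0$; this is where the hypothesis $u\in L^2$ must really be used. Since $u\in C^2$, multiplying the ODE by $u'$ and integrating gives the first integral
\[
\tfrac12 (u')^2 = 2Cu^2 - u^3 + E
\]
for some real constant $E$. From this I would first deduce that $u$ is bounded. It is bounded above because $(u')^2\ge 0$ forces the cubic $t^3-2Ct^2-E$ to be $\le 0$ at $t=u(x)$ for every $x$, and this cubic is positive for all large positive $t$. It is bounded below by a finite-time blow-up argument: if $u$ took arbitrarily large negative values then, at a point where $u$ is sufficiently negative and $u'$ has the appropriate sign, the first integral gives $|u'|\ge (-u)^{3/2}$, whence $\tfrac{d}{dx}(-u)^{-1/2}\le -\tfrac12$, which drives $u\to-\infty$ at a finite value of $x$ and contradicts the fact that $u$ is defined on all of $\mathbf R$.

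Once $u$ is bounded, $u'$ is bounded too (again by the first integral), so $u$ is Lipschitz; a Lipschitz function in $L^2(\mathbf R)$ must tend to $0$ at $\pm\infty$ (otherwise $|u|\ge\varepsilon/2$ on a sequence of disjoint intervals of fixed length, contradicting $u\in L^2$). Letting $x\to\infty$ in the first integral then gives $(u')^2\to 2E$; since a Lipschitz function tending to $0$ cannot have $|u'|$ bounded below by a positive constant, we get $u'\to 0$ and hence $E=0$. Therefore $(u')^2 = 4Cu^2 - 2u^3$, and since $u\in L^2\cap L^\infty$ the right-hand side is in $L^1(\mathbf R)$; thus $u'\in L^2$ and $u\in H^1$. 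From here a routine induction finishes the proof: if $u\in H^{k+1}$ then, $H^1$ functions being bounded, every term of $u^{(k+2)} = 4Cu^{(k)} - 3\sum_{j} \binom{k}{j} u^{(j)}u^{(k-j)}$ lies in $L^2$, so $u\in H^{k+2}$, and since $\mathbf N$ is cofinal in $[0,\infty)$ this gives $u\in H^s$ for all $s\ge 0$.

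The main obstacle is the middle step: getting from ``$u\in C^\infty\cap L^2$'' to genuine decay, in particular pinning down $E=0$ and, through the resulting identity $(u')^2=4Cu^2-2u^3$, the pointwise control that makes $u'\in L^2$. The boundedness-below argument, which rules out finite-time blow-up, is the one genuinely fiddly ingredient; everything before and after it is routine bootstrapping.
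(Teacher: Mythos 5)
Your argument is correct, but it takes a genuinely different route from the paper's. The paper gets $u\in H^1$ directly from $u\in L^2$ by a Fourier-transform bootstrap: writing the equation as $u-u''=au+bu^2$, it notes that $u^2\in L^1$ so $\mathcal{F}(u^2)$ is bounded, hence $(1+k^2)^{-1/2}\mathcal{F}(au+bu^2)\in L^2$ and $u\in H^1$; then $u^2\in L^2$ gives $u\in H^2$, successive differentiation gives $u\in H^s$ for all $s$, and analyticity is deduced last from the classical ODE theorem. You instead establish smoothness and analyticity first by local $W^{2,1}_{\mathrm{loc}}$ regularity, and then extract the global $H^s$ information from the first integral $\tfrac12(u')^2=2Cu^2-u^3+E$: boundedness above from positivity of $(u')^2$, boundedness below from a finite-time blow-up argument, decay from Lipschitz-plus-$L^2$, $E=0$ from the decay, and finally $u'\in L^2$ because $4Cu^2-2u^3\in L^1$. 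Both routes are sound. The paper's Fourier step is shorter, needs no decay information to reach $H^1$, and transfers verbatim to the fourth-order equation and beyond (compare Lemma \ref{regularity2}), where a usable first integral of this simple form is not at hand; your energy-based route is more elementary (no Fourier transform) but is tied to the second-order structure, and its one delicate ingredient -- the blow-up argument ruling out $u\to-\infty$ -- is avoided entirely by the paper. Note also that you derive $(u')^2=4Cu^2-2u^3$ en route, which is exactly the paper's \eqref{uprimesq}; the paper postpones that identity to the proof of Theorem \ref{1solthm}, so your proof front-loads part of that later argument. Two small points to tighten: the inequality $|u'|\ge(-u)^{3/2}$ holds only once $-u$ exceeds a threshold depending on $C$ and $E$ (which suffices, but should be said), and the ``appropriate sign'' of $u'$ should be justified by noting that $(u')^2>0$ at such points and that the equation is invariant under $x\mapsto -x$, so one of the two time directions blows up.
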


\begin{proof}
 Equation \eqref{1soleqn} can be rewritten as
\begin{equation}
u-u''=au+bu^2, \label{boot}
\end{equation}
where $a$ and $b$ are constants.  Let $\cal F$ denote the Fourier
transform, defined for $f \in L^1$ by ${\cal F}f(k)=\intR
f(x)e^{ikx}\ dx$, and extended to $L^2$ in the usual way. Letting
$f=au$ and $g=bu^2$, taking the Fourier transform of \eqref{boot},
and dividing both sides by $(1+k^2)^{1/2}$, we obtain
$$
(1+k^2)^{1/2}{\cal F}u = \frac{1}{(1+k^2)^{1/2}}({\cal F}f + {\cal
F}g).
$$
Since $f$ is in $L^2$, then ${\cal F}f$ and ${\cal
F}f/(1+k^2)^{1/2}$ are in $L^2$; and since $g$ is in $L^1$, then
${\cal F}g$ is bounded and continuous, and ${\cal
F}g/(1+k^2)^{1/2}$ is in $L^2$. Therefore $(1+k^2)^{1/2}{\cal F}u
\in L^2$, so $u \in H^1$.  But it then follows that $u^2 \in L^2$,
whence both $f$ and $g$ are in $L^2$, so \eqref{boot} gives $u''
\in L^2$ and $u \in H^2$. Taking derivatives of \eqref{boot}
successively now easily gives that all higher-order derivatives of
$u$ are in $L^2$, so that $u \in H^s$ for all $s \ge 0$.

As a particular consequence, we have that $u$ is a classical
solution of \eqref{boot} on $\mathbf R$.  Therefore the fact that
$u$ is analytic on $\mathbf R$ follows from the fundamental
theorems of ordinary differential equations, given that the
right-hand side of \eqref{boot} is an analytic function of $u$
(see, e.g., section 1.8 of \cite{CL}).
\end{proof}

\begin{thm} Suppose $C \in \mathbf R$, and suppose $u \in L^2$ is a real-valued solution of
\eqref{1soleqn}, in the sense of distributions.  Suppose also that $u$ is not identically
zero.  Then $C>0$, and there exists $K \in \mathbf
R$ such that
\begin{equation}
u=\frac{2C}{\cosh^2(\sqrt C x + K)}. \label{cosh2}
\end{equation}
\label{1solthm}
\end{thm}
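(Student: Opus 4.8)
The plan is to integrate the second-order ODE \eqref{1soleqn} directly by the standard energy-method reduction, using the regularity and decay already guaranteed by Lemma \ref{regularity1} and the $H^s$ bootstrap. First I would rewrite \eqref{1soleqn} in the form $u'' = 4Cu - 3u^2$ (absorbing the factor $16$), multiply both sides by $u'$, and integrate to obtain the first integral $(u')^2 = 4Cu^2 - 2u^3 + A$ for some constant $A$. Since Lemma \ref{regularity1} tells us $u \in H^s$ for all $s$, we have $u(x) \to 0$ and $u'(x) \to 0$ as $|x| \to \infty$; evaluating the first integral in this limit forces $A = 0$, so
\begin{equation*}
(u')^2 = u^2(4C - 2u).
\end{equation*}

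Next I would extract the sign information. Since $u$ is not identically zero and is analytic (Lemma \ref{regularity1}), it has some point $x_0$ with $u(x_0) \ne 0$; at a point where $u$ attains a local extremum (such as a global max or min, which exists by continuity and decay at infinity) we have $u' = 0$, so from the first integral either $u = 0$ or $u = 2C$ there. Because $u \not\equiv 0$, at least one extreme value equals $2C$, and in particular $2C$ has the same sign as that extreme value; tracking this carefully shows $C > 0$ (if $C \le 0$ the right-hand side $u^2(4C-2u)$ would be nonpositive wherever $u$ is positive and we could rule out a nontrivial decaying solution — I would argue that $u$ cannot change sign, because at a zero of $u$ both $u$ and $u'$ vanish by the first integral, forcing $u \equiv 0$ by uniqueness for the ODE; hence $u$ has constant sign, and combined with $(u')^2 \ge 0$ this pins down $C > 0$ and $0 < u \le 2C$, with $u$ a single positive bump).

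Finally, with $C > 0$ and $0 < u < 2C$ established on the set where $u > 0$ (which, by the constant-sign argument, is all of $\mathbf R$ unless $u \equiv 0$), I would separate variables in $u' = \pm u\sqrt{4C - 2u}$ and integrate. The substitution $u = 2C\,\sech^2 v$ (or equivalently $u = 2C/\cosh^2 w$) converts the integral $\int du/(u\sqrt{4C-2u})$ into an elementary one, yielding $\sqrt{C}\,x + K = \pm w$ for a constant of integration $K$, and hence \eqref{cosh2}. I expect the main obstacle to be the sign/topology bookkeeping in the middle step — rigorously ruling out sign changes and oscillations of $u$, and showing the maximal interval on which the separated ODE is being solved is all of $\mathbf R$ — rather than the final quadrature, which is routine. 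The cleanest way to handle this is the observation that any zero of $u$ is a double zero (both $u$ and $u'$ vanish there by the first integral $A = 0$), so by uniqueness of solutions of the analytic ODE $u'' = 4Cu - 3u^2$, a nontrivial $u$ never vanishes; then monotonicity of $u$ on each side of its unique critical point follows from $(u')^2 = u^2(4C-2u)$ together with $u' \ne 0$ away from that point.
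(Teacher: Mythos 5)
Your proposal is correct, but it takes a genuinely different route from the paper's. You both start identically: the first integral $(u')^2=4Cu^2-2u^3$ (the paper's \eqref{uprimesq}), with the constant of integration killed by the decay from Lemma \ref{regularity1}. From there you run the classical real-variable phase-plane argument: any zero of $u$ is a double zero, so ODE uniqueness forces a nontrivial $u$ to have constant sign; positivity of $(u')^2=u^2(4C-2u)$ plus the fact that every interior critical value must equal $2C$ then pins down $C>0$ and $0<u\le 2C$ (and rules out $C\le 0$, since a negative solution would satisfy $u\le 2C<0$ while decaying to zero); finally a real separation of variables with $u=2C\sech^2 w$ yields \eqref{cosh2}. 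The paper instead refuses to determine the sign of $C$ in advance: it substitutes $\zeta=-C+u/2$, introduces a fixed branch of the complex square root, reduces to $v'/(C-v^2)=1$ (equation \eqref{vprime}), integrates in the complex domain to obtain $u=2(y'/y)'$ with $y=\sinh(\alpha x+M)$ for a complex constant $M$ as in \eqref{solnforuneq1}, and only afterwards runs a case analysis on $C>0$, $C<0$, $C=0$ to see which parameter values produce a $y$ with no real zeros. Your approach is shorter and more elementary for $N=1$; the paper's is chosen because it is the template that generalizes to the $N=2$ stationary equation in Section 5, where the Dubrovin variables are genuinely complex and no analogous real sign analysis is available. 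The one step you flag as the main obstacle --- excluding sign changes and multiple critical points --- is indeed closed by your double-zero/uniqueness observation together with the remark that two critical points with value $2C$ would force an intermediate critical point with value strictly between $0$ and $2C$, which the first integral forbids; so your outline is complete modulo routine quadrature.
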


\noindent
 \textit{Remark.} We can also write \eqref{cosh2} in the
form
\begin{equation*}
u=\psi^{(1)}(x;a;\sqrt{C}),
\end{equation*}
where $a=e^{-2K}$.

\bigskip

\begin{proof}
Suppose $u$ is an $L^2$ solution of \eqref{1soleqn} on $\mathbf R$,
and is not identically 0.   By Lemma
\ref{regularity1}, $u$ is analytic on $\mathbf R$, and is in $H^s$ for every $s \ge 0$.
Hence, in particular, $u$ and all its derivatives tend to zero as $|x| \to \infty$.

Multiplying \eqref{1soleqn} by $u'$ and integrating gives
\begin{equation}
(u')^2 = 4Cu^2 - 2u^3, \label{uprimesq}
\end{equation}
where we have used the fact that $u \to 0$ and $u' \to 0$ as $x
\to \infty$ to evaluate the constant of integration as zero.
Letting $\zeta = -C + u/2$, we can rewrite \eqref{uprimesq} as
\begin{equation}
(\zeta')^2= -4\zeta (\zeta+C)^2.
 \label{zetaprimesq}
\end{equation}

 Since $u$ is analytic on $\mathbf R$, then so is $\zeta$.  We know that
 $\zeta$ cannot be identically equal to $-C$ on $\mathbf R$, because $u$ is not
 identically zero.  Also, if $C \ne 0$, then $\zeta$ cannot be identically equal
 to $0$ on $\mathbf R$, because this would contradict the fact that $u \to 0$ as $x \to \infty$.
 Therefore the set $\left\{x \in \mathbf
R: \zeta(x)=0\ \text{or}\ \zeta=-C\right\}$ must consist of isolated points (or be empty).  Hence
there exists an open interval $I$ in $\mathbf R$ such that for
all $x \in I$,  $\zeta(x) \ne 0$ and $\zeta(x) +C \ne 0$.  Also,
from \eqref{zetaprimesq} it follows that $\zeta(x) < 0$ for $x
\in I$.

Define $\Omega$ to be the domain
in the complex plane given by
\begin{equation}
\Omega = \mathbf C -
\{z: \Re z \ge 0 \ \text{and $\Im z =0$}\}.
\label{defOmega}
\end{equation}
  Henceforth, for $z \in \Omega$
we will denote by $\sqrt z$ the branch of the
square root function given by $\sqrt z =
\sqrt r e^{i\theta/2}$ when $z=re^{i\theta}$ with $r >0$ and $0 <
\theta < 2\pi$.  Thus $\sqrt z$ is an analytic function on $\Omega$, and since $\zeta(x)$
takes values in $\Omega$, then $\sqrt{\zeta(x)}$ is an analytic function of $x$ on $I$.

 From \eqref{zetaprimesq} we have that there exists a
function $\theta:I \to \left\{-1,1\right\}$ such that
\begin{equation}
\zeta'=2i\theta(x)\sqrt{\zeta}(\zeta+C)
\label{zetaprime}
\end{equation}
for all $x \in I$.   Since
$\zeta(x)+C \ne 0$ for all $x \in I$, it then follows from
\eqref{zetaprime} that $\theta$ is analytic on $I$, and, since
$\theta$ takes values in $\left\{-1,1\right\}$, $\theta$ must
therefore be constant on $I$.

Now define $v =-i\theta\sqrt{\zeta}$ on $I$, noting for future
reference that, since $\zeta < 0$ on $I$, then $v$ is real-valued.
Let $\alpha = \sqrt C$. We then have from \eqref{zetaprime} that
\begin{equation}
 \frac{v'}{C-v^2}= \frac{v'}{\alpha^2-v^2}= 1.
 \label{vprime}
\end{equation}

To integrate \eqref{vprime}, we first fix $x_0 \in I$, let $V=v(x_0)$,
and define
\begin{equation*}
L_{\alpha,V}(z)=\int_V^z{\frac{dw}{\alpha^2-w^2}}.
\end{equation*}
Since $V \ne \pm \alpha$, this defines $L_{\alpha,V}$ as a single-valued,
analytic function of $z$ in some neighborhood of $V$.  By shrinking $I$
if necessary, we may assume that $L_{\alpha,V}(v(x))$ is defined for all $x \in I$,
and so \eqref{vprime} may be integrated to give
\begin{equation}
L_{\alpha,V}(v(x))=x-x_0
\label{intvprime}
\end{equation}
for $x \in I$.

Our next goal will be to solve \eqref{intvprime} for $v(x)$. Once
this has been done, we can recover $u$ from the formula
\begin{equation}
u=2(\zeta+C)=2(\alpha^2-v^2). \label{recovuneq1}
\end{equation}

Consider first the case when $\alpha \ne 0$ (and hence $C \ne 0$).  In this case, by
choosing an appropriate branch of the complex logarithm function,
we could express $L_{\alpha,V}(z)$ as
\begin{equation}
L_{\alpha,V}(z)=\frac{1}{2\alpha}\left(\log\left(\frac{\alpha+z}{\alpha+V}\right)-\log\left(\frac{\alpha-z}{\alpha-V}\right)\right).
\label{defG0}
\end{equation}
However, this will not be necessary, since we really only need to
use the fact that
\begin{equation}
 \exp(2 \alpha L_{\alpha,V}(z))=\left(\frac{\alpha + z}{\alpha
- z}\right)\left(\frac{\alpha - V}{\alpha + V}\right) \label{eL}
\end{equation}
 for all $z$
in some neighborhood of $V$.  To see that \eqref{eL} is true,
define $f_1(z)$ to be the function on the left side of \eqref{eL},
and $f_2(z)$ to be the function on the right side.    Then both
$f_1$ and $f_2$ satisfy the differential equation $df/dz= (2 \alpha/(\alpha^2-z^2)) f(z)$ in some neighborhood of $V$, and both take the value 1 at
$z=V$. Since a solution $f$ of the differential equation with a prescribed
value at $V$ is unique on any neighborhood of $V$ where it is defined, $f_1$ must equal $f_2$ on some
neighborhood of $V$.

Now multiplying both sides of \eqref{intvprime} by $2\alpha$,
taking exponentials, and using \eqref{eL}, one obtains
\begin{equation}
\frac{\alpha +v}{\alpha - v} = \left(\frac{\alpha + V}{\alpha -V}\right)e^{2\alpha(x-x_0)}=e^{2A}, \label{eqforv}
\end{equation}
where
\begin{equation}
A = \alpha (x-x_0)+M
\label{defA}
\end{equation}
and $M$ is any number such that
\begin{equation}
e^{2M}=\frac{\alpha+V}{\alpha-V}.
\label{defK}
\end{equation}
  Solving \eqref{eqforv} for $v$, we
find that
\begin{equation*}
v=\frac{y'}{y}
\end{equation*}
where
\begin{equation}
y = \sinh A.
\label{formforyneq1}
\end{equation}
Substituting into \eqref{recovuneq1}, we find that
\begin{equation}
u=2\left(\frac{\alpha^2y^2 - (y')^2}{y^2}\right)=2(y'/y)'.
\label{solnforuneq1}
\end{equation}

Since $u$ is analytic on $\mathbf R$, then the function on the
right side of \eqref{solnforuneq1} is extendable to an analytic
function on $\mathbf R$.  This implies that
$y$ cannot have any zeroes on $\mathbf R$. We now have to
determine the values of $C$ for which this is possible.  We
consider separately the subcases in which $C>0$ and $C<0$.

If $C>0$, then $\alpha=\sqrt C$ is real, and from \eqref{defA} and \eqref{defK}
we see that we can take
\begin{equation}
A= \alpha x + K + i \sigma \pi/2,
\label{e2areal1}
\end{equation}
where $K$ is real and either $\sigma =0$ or $\sigma = 1$, according
to whether $(V+\alpha)/(V-\alpha)$ is positive or negative.  If
$\sigma =0$, then
\begin{equation*}
y=\sinh (\alpha x +K),
\end{equation*}
which equals zero for some $x \in \mathbf R$, so $u$ has a
singularity at this $x$.  On the other hand, if $\sigma =1$, then
\eqref{formforyneq1} gives
\begin{equation*}
y = i\cosh(\alpha x + K),
\end{equation*}
which does not vanish at any point of $\mathbf R$. In this case
the function $u$ given by \eqref{solnforuneq1} is nonsingular, and
in fact we recover the solution given by \eqref{cosh2}.

If, on the other hand, $C<0$, then $\alpha=i\sqrt {|C|}$ is purely
imaginary, so
\begin{equation*}
\left|\frac{\alpha+V}{\alpha -V}\right|=1.
\end{equation*}
It then follows from \eqref{defA} and \eqref{defK} that $A$ is purely imaginary,
and we can write
\begin{equation}
A= i(\sqrt {|C|} x +K),
\label{pureimconj2}
\end{equation}
where $K$ is real.  Then \eqref{formforyneq1} gives
\begin{equation*}
y=i \sin(\sqrt{ |C|} x + K),
\end{equation*}
contradicting the fact that $y$ cannot have any zeroes on $\mathbf R$.  We conclude that $C$ cannot be
negative.

It remains to show that $\alpha$ and $C$ cannot equal zero. For if they were, then integrating \eqref{vprime} would give
$v=(x+K)^{-1}$, where $K$ is a constant, and so by \eqref{recovuneq1},
\begin{equation*}
u=\frac{-2}{(x+K)^2}
\end{equation*}
for all $x \in I$.  But this contradicts the fact that $u$ is analytic on all of $\mathbf R$.

We have now shown that if \eqref{1soleqn} has a solution in $L^2$
that is not identically zero, then $C$ must be positive, and in
that case the only solutions are those given by \eqref{cosh2}. So
the proof is complete.
\end{proof}

\section{The stationary equation for $N=2$} \label{sec:2solconverse}

According to Theorem \ref{forward} and Definition \ref{defNsol},
every $N$-soliton solution of the KdV hierarchy has profiles which
are solutions of the stationary equation \eqref{stathier}, or more
specifically of \eqref{solitonequation}. In this section, for the
case $N=2$, we prove a converse to this result: every $H^2$
solution of \eqref{stathier} with $N=2$ must be either a
$1$-soliton profile or a $2$-soliton profile.

Taking $N=2$ in
\eqref{stathier}, we obtain from \eqref{R1357} the equation
\begin{equation}
\frac{d_1}{2}-\frac{d_3}{4}(u)+\frac{d_5}{16}(u''+3u^2)-\frac{d_7}{64}
(u''''+5u'^2+10uu''+10u^3)=0. \label{stathier2}
\end{equation}
We may assume in what follows that $d_7 \ne 0$, for otherwise we
are back in the case $N=1$, which has already been handled in
section \ref{sec:1solconverse}. Dividing by $d_7$ if necessary, we
can therefore take $d_7=1$ without losing generality.   We may
also henceforth assume that $d_1=0$, since a computation similar
to that given in \eqref{d1eq0} shows that this must be the case if
\eqref{stathier2} has a solution $u$ in $H^2$.

\begin{lem}  Suppose $u \in H^2$ is a solution of
equation \eqref{stathier2} in the sense of distributions. Then $u$ must be
in $H^s$ for all $s \ge 0$, and $u$ is analytic on $\mathbf R$. In
particular, we have
\begin{equation}
\lim_{x \to \infty} u(x) = \lim_{x \to \infty} u'(x)=\lim_{x \to
\infty} u''(x)=\lim_{x \to \infty} u'''(x)=0.
 \label{zeroatinfty}
\end{equation}
\label{regularity2}
\end{lem}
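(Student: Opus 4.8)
The plan is to run a bootstrap (elliptic regularity) argument on equation \eqref{stathier2}, entirely parallel to the proof of Lemma \ref{regularity1} but with the extra derivatives accounted for. First I would rewrite \eqref{stathier2}, with $d_7=1$ and $d_1=0$, in the form
\begin{equation*}
u''''+u = f(u,u',u'') := u + 5u'^2 + 10uu'' + 10u^3 - \frac{16}{64}\cdot 64\cdot\left(\frac{d_5}{16}(u''+3u^2)-\frac{d_3}{4}u\right),
\end{equation*}
or more cleanly $u''''+u = a u + b u'' + c u^2 + e u'^2 + g u u'' + h u^3$ for suitable real constants $a,b,c,e,g,h$; the point is simply that the right-hand side is a polynomial in $u,u',u''$ with no constant term. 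Taking the Fourier transform and dividing by $(1+k^4)^{1/2}$ gives
\begin{equation*}
(1+k^4)^{1/2}\,{\cal F}u = \frac{1+k^4}{1+k^4+k^2}\cdot\frac{1}{(1+k^4)^{1/2}}\,\big({\cal F}(\text{RHS})\big),
\end{equation*}
where the multiplier $(1+k^4)/(1+k^4+k^2)$ is bounded. Since $u\in H^2$, each linear term $au$, $bu''$ lies in $L^2$; the quadratic terms $u^2$, $u'^2$, $uu''$ are products of $H^2$ (hence $L^\infty$) functions with $L^2$ functions, so they lie in $L^2$; and $u^3\in L^2$ as well. Hence the right-hand side of the rewritten equation is in $L^2$, its Fourier transform divided by $(1+k^4)^{1/2}$ is in $L^2$, and therefore $(1+k^4)^{1/2}{\cal F}u\in L^2$, i.e. $u\in H^4$.

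Next I would iterate: once $u\in H^4$, differentiating the equation (or just noting $u'\in H^3\subset H^2$, $u''\in H^2$, etc.) shows that each term on the right-hand side, together with enough of its derivatives, is again in $L^2$, so $u\in H^6$; continuing inductively gives $u\in H^s$ for all $s\ge 0$. In particular $u$ is a classical $C^\infty$ solution of the fourth-order ODE $u''''+u = f(u,u',u'')$, whose right-hand side is a polynomial — hence real-analytic — function of $(u,u',u'')$; the standard existence-uniqueness-and-analyticity theorem for analytic ODE systems (as cited, section 1.8 of \cite{CL}) then yields that $u$ is real-analytic on $\mathbf R$. Finally, since $u\in H^s$ for all $s$, the Sobolev embedding $H^{s}\hookrightarrow C^k_0$ (functions whose first $k$ derivatives vanish at infinity) applied with $s$ large gives \eqref{zeroatinfty}: $u$, $u'$, $u''$, $u'''$ all tend to $0$ as $x\to\infty$ (and likewise as $x\to-\infty$).

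I do not expect a genuine obstacle here — this is a routine bootstrap — but the one point requiring a little care is the very first step, showing the right-hand side is in $L^2$ when we only know $u\in H^2$: this is where we use that $H^2(\mathbf R)$ is an algebra (equivalently, $H^2\hookrightarrow L^\infty$ in one dimension) so that products like $u u''$ land in $L^2$ rather than merely in $L^1$. Once that initial gain from $H^2$ to $H^4$ is secured, all further iterations are automatic because $H^s$ for $s\ge 2$ is closed under multiplication and the equation expresses the top derivative $u''''$ in terms of strictly lower-order quantities. One should also double-check that the Fourier multiplier appearing when we "invert" $u''''+u$ (or, if one prefers, $u''''$ alone after moving the $u$ term to the right) is bounded, which it is; alternatively one can avoid Fourier analysis entirely and argue directly that $u''''\in L^2$ implies $u\in H^4$ using that $u\in H^2$ already controls the lower-order terms.
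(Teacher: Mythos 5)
Your proposal is correct and follows essentially the same route as the paper: the paper also solves \eqref{stathier2} (with $d_1=0$, $d_7=1$) for $u''''$ as a polynomial in $u,u',u''$ with no constant term, observes that every term on the right lies in $L^2$ because $u\in H^2$ (the algebra/$L^\infty$ embedding point you flag), concludes $u\in H^4$ and hence \eqref{zeroatinfty}, invokes the analyticity theorem for ODEs with analytic right-hand side, and iterates by differentiating the equation to get $u\in H^s$ for all $s$. Your Fourier-multiplier phrasing of the first step is just a more explicit version of the same bootstrap.
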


\begin{proof}
Taking $d_1 = 0$ in \eqref{stathier2} and solving for $u''''$, we
obtain
\begin{equation}
u''''=au+bu^2+cu^3+d(u')^2+eu''+fuu'', \label{solveforu4}
\end{equation}
where $a, b, c, d, e, f$ are constants.  Since $u \in H^2$, then
all the terms on the right-hand side of \eqref{solveforu4} are in
$L^2$, so $u'''' \in L^2$ as well. Hence $u \in H^4$, and this
already yields \eqref{zeroatinfty}.  It also implies that $u$ is a
classical solution of \eqref{stathier2}, so by fundamental
theorems of ordinary differential equations, $u$ is analytic.
Finally, taking derivatives of \eqref{solveforu4} successively and
applying an inductive argument yields that $u \in H^s$ for all $s
\ge 0$.
\end{proof}

\begin{thm}
Suppose $d_1=0$, $d_7=1$, and $d_3$ and $d_5$ are arbitrary real numbers, and suppose $u \in H^2$ is a nontrivial (i.e., not identically zero)
distribution solution of
equation \eqref{stathier2}.

   Then either

(i) $u$ is a 1-soliton profile given by
\begin{equation}
u= \psi^{(1)}\left(x;a;\sqrt{C}\right), \label{casei}
\end{equation}
where $C$ is a positive root of the quadratic equation
\begin{equation}
z^2-d_5z+d_3=0. \label{c1c2}
\end{equation}
and $a$ is a real number such that $a > 0$; or

(ii) $u$ is a 2-soliton profile given by
\begin{equation}
u=\psi^{(2)}\left(x;a_1,a_2;\sqrt{C_1},\sqrt{C_2}\right),
\label{desiredu}
\end{equation}
where $C_1$ and $C_2$ are roots of equation \eqref{c1c2} with $0 <
C_1 < C_2$, and $a_1$ and $a_2$ are real numbers such that $a_2 <
0 <a_1$.

\label{2solthm}
\end{thm}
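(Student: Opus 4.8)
The plan is to integrate the fourth-order ODE \eqref{stathier2} using the method already developed for the $N=1$ case in Section \ref{sec:1solconverse}, now with two ``Dubrovin'' variables instead of one. First I would use Lemma \ref{regularity2} to reduce to a classical, analytic solution $u$ on $\mathbf R$ satisfying the decay conditions \eqref{zeroatinfty}. Then I would obtain two first integrals of \eqref{stathier2}: multiplying by $u'$ and integrating gives one (a polynomial relation among $u,u',u'',u'''$), and the Hamiltonian structure of the stationary equation gives a second; both constants of integration evaluate to zero because $u$ and its derivatives vanish at infinity. Following Gel'fand--Dickey (Chapter 12 of \cite{Di}), these two integrals can be repackaged via the resolvent polynomial: one writes $R(x,z) = z^2 + (\text{something})z + (\text{something})$ with coefficients built from $u,u',u''$, so that the two conserved quantities say precisely that the discriminant-type quantity $R(x,z)^2 \cdot(\text{lower order}) - (\text{stuff})$, as a polynomial in $z$, has $x$-independent coefficients. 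Equivalently, the two roots $\gamma_1(x),\gamma_2(x)$ of $z \mapsto R(x,z)$ satisfy Dubrovin's equations \eqref{sys2}, and $u = \gamma_1 + \gamma_2 + (\text{const})$ where the constant is fixed by the values of $d_3,d_5$; the product $\gamma_1\gamma_2$ and sum $\gamma_1+\gamma_2$ are governed by an ODE whose right-hand side is a fixed polynomial $P(z)$ of degree $5$ whose roots are determined by $d_3, d_5$ (this is where \eqref{c1c2} enters: the nonzero roots of $P$ are $-C_1^2, -C_2^2$ with $C_1,C_2$ the roots of $z^2-d_5z+d_3=0$, together with a root at $0$ of appropriate multiplicity).

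Next I would integrate the Dubrovin system exactly as in the $N=1$ case: on an interval $I$ where the $\gamma_i$ are distinct and avoid the branch points, the change of variables to the ``angle'' variables linearizes the flow, and integrating the resulting abelian differentials $\sum d\gamma_i/\sqrt{P(\gamma_i)}$ against $1$ and against $\gamma_i\,d\gamma_i$ yields, after exponentiation (using the elementary logarithm identity \eqref{eL} applied to each factor), that $u$ is expressible as $u = 2(D'/D)'$ where $D$ is a $2\times 2$ Wronskian $D(y_1,y_2)$ of functions $y_j = e^{\alpha_j x} + a_j e^{-\alpha_j x}$ with $\alpha_j^2 = C_j$. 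In other words every solution, on each subinterval where it is smooth, agrees with some $\psi^{(2)}(x;a_1,a_2;\sqrt{C_1},\sqrt{C_2})$ (or a degenerate $\psi^{(1)}$, if the roots of $P$ collide or one $C_j$ vanishes). Here it is important to carry along the degenerate cases carefully: $C_1 = C_2$, $C_1 = 0$, and complex $C_j$ all arise a priori and must be tracked, exactly as the subcases $C>0$, $C<0$, $C=0$ were tracked in the proof of Theorem \ref{1solthm}. Since $u$ is globally analytic and the Its--Matveev formula is real-analytic off its singular set, the local representation propagates to all of $\mathbf R$.

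The final and hardest step is the \emph{nonsingularity analysis}: having shown $u = \psi^{(2)}(x;a_1,a_2;\sqrt{C_1},\sqrt{C_2})$ on each interval, I must determine for which parameter values $D(y_1,y_2)$ has no zero on $\mathbf R$, since global analyticity of $u$ forces $D \ne 0$ everywhere. Using the explicit exponential-sum expansion \eqref{firstforD} for $N=2$,
\begin{equation*}
D = V(\alpha_1,\alpha_2)e^{(\alpha_1+\alpha_2)x} + a_1 V(-\alpha_1,\alpha_2)e^{(-\alpha_1+\alpha_2)x} + a_2 V(\alpha_1,-\alpha_2)e^{(\alpha_1-\alpha_2)x} + a_1 a_2 V(-\alpha_1,-\alpha_2)e^{-(\alpha_1+\alpha_2)x},
\end{equation*}
one analyzes the sign/vanishing of $D$ as a function of $x$ in each regime. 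When $C_1,C_2$ are real and distinct with $0<C_1<C_2$, factoring out $e^{(\alpha_1+\alpha_2)x}$ reduces $D$ to a polynomial-like expression in $e^{2\alpha_1 x}$ and $e^{2\alpha_2 x}$; requiring no real zero forces the outer coefficients $1$ and $a_1a_2$ to have the same sign and the cross terms to be controlled, and a careful case check (paralleling Lemma \ref{condforDpos} and its converse for $N=2$ mentioned in Remark (i) after Definition \ref{defNsol}) yields exactly $a_2 < 0 < a_1$; degenerate configurations — $C_1=C_2$ (which collapses $D$ and forces reduction to a 1-soliton or else produces a polynomial factor with a zero), $C_1 = 0$ (producing a linear-in-$x$ factor, hence a zero, as in the $C=0$ subcase of Theorem \ref{1solthm}), $C_j$ complex (forcing oscillatory factors $\sin$ or $\cos$, hence zeros) — are all ruled out. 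This case analysis is the technical heart of the argument and the main obstacle; everything before it is a routine transcription of the $N=1$ proof into the $2$-dimensional setting, whereas here one must be genuinely exhaustive about the geometry of the zero set of a four-term exponential sum. Once this is done, the reality of $u$ (guaranteed by $u\in H^2$ real-valued, cf. Lemma \ref{psireal}) pins down that $a_1,a_2$ are real and that the $C_j$ are as claimed, completing the proof that $u$ is the stated $1$- or $2$-soliton profile.
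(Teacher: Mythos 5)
Your overall route is the same as the paper's: reduce to an analytic, decaying solution via Lemma \ref{regularity2}, pass to the Gel'fand--Dickey resolvent polynomial and its roots (Dubrovin's variables satisfying \eqref{sys2}), integrate the separated system by elementary logarithms to land on a $2\times 2$ Wronskian formula, and then exhaustively analyze the zero set of $D$. That final analysis is carried out in the paper (Lemma \ref{uis2sol}) essentially as you describe, and your conclusion $0<\beta_1<\beta_2$, $a_2<0<a_1$ matches. But there are two concrete gaps. First, you have misplaced the source of alternative (i). In the paper the $1$-soliton case arises not from collisions or vanishing of the roots of \eqref{c1c2}, but from two prior branchings your plan skips: the possibility that $\hat R_0\equiv 0$ (forcing $d_3=0$), and the possibility \eqref{zetaipcjezero} that one Dubrovin variable is \emph{identically frozen} at a branch point $-C_j$ --- which can happen even when $C_1,C_2$ are distinct and positive. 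Integrating only the generic two-variable system misses exactly the solutions \eqref{casei} that the theorem asserts can occur. Conversely, the configurations you offer as the source of ``a degenerate $\psi^{(1)}$'' ($C_1=C_2$, or one $C_j=0$ with $\hat R_0\not\equiv 0$) are shown in the paper to yield \emph{no} admissible solution at all: the limiting $D$ contains positon-type terms such as $\tfrac12\sinh(2\alpha(x-x_0)+2M)-\alpha(x-x_0)-\alpha M'(\alpha)$, which always vanish somewhere on $\mathbf R$.

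Second --- and this is the step that would actually fail as written --- in those degenerate cases the separation of variables cannot be performed directly: the partial-fraction decomposition \eqref{defGH} underlying Lemma \ref{gensol} requires $\beta_1^2\ne\beta_2^2$ and $\beta_j\ne 0$, so ``integrating the Dubrovin system exactly as in the $N=1$ case'' produces no formula when the branch points collide or vanish. The paper needs an additional device here: perturb the branch points to nearby $(\beta_1,\beta_2)$, solve the perturbed system by the implicit function theorem with analytic dependence on the parameters (Lemma \ref{impfnv}, Corollary \ref{ualphaandbeta}), and then compute the limit of $D(\beta_1,\beta_2,x)$ as $(\beta_1,\beta_2)\to(\alpha_1,\alpha_2)$ after dividing out the appropriate power of the perturbation. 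Your plan should either supply this limiting mechanism or some other way of integrating the degenerate systems; without it the cases $C_1=C_2$ and $C_j=0$ are not actually excluded, and the dichotomy of the theorem is not established.
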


\begin{proof} Suppose $u$ is a
nontrivial distribution solution of \eqref{stathier2} with $d_7=1$
and $d_1=0$. By Lemma \ref{regularity2} we may assume that $u$ is
analytic on $\mathbf R$ and satisfies \eqref{zeroatinfty}.

 Following Chapter 12 of \cite{Di} we define, for $x \in \mathbf R$ and $\zeta \in \mathbf C$,
\begin{equation}
\hat R(x,\zeta)=\hat R_0 + \hat R_1 \zeta + \hat R_2 \zeta^2,
\label{defRhat}
\end{equation}
where
\begin{equation}
\begin{aligned}
\hat R_0 &= d_3 R_1 + d_5 R_3 + d_7R_5 = \frac{d_3}{2}-\frac{d_5}{4}\ u+\frac{1}{16}(u''+3u^2)\\
\hat R_1 &= d_5R_1 + d_7 R_3 =\frac{d_5}{2}-\frac14\ u\\
\hat R_2 &= d_7R_1 = \frac12\ .
\end{aligned}
\label{defhatRi}
\end{equation}
We claim that
\begin{equation}
\hat R'''+4u\hat R'+2u'\hat R+4\zeta \hat R'=0.
 \label{eqnforRhat}
 \end{equation}
 Indeed, substituting \eqref{defRhat} into \eqref{eqnforRhat} and using Lemma \ref{recrel}, we find that the left side
 of \eqref{eqnforRhat} is equal to
  \begin{equation}
\begin{aligned}
  -4(d_3R_3'&+d_5R_5'+d_7R_7')+
  \\
&+\zeta(d_5Q_1+d_7Q_3+4d_3R_1')+\zeta^2(d_7Q_1+4d_5R_1')+\zeta^3(4d_7R_1'),
\end{aligned}
 \label{uglyeqforRs}
 \end{equation}
where $Q_1=R_1'''+4uR_1'+2u'R_1+4R_3'$ and $Q_3=R_3'''+4uR_3'+2u'R_3+4R_5'$.
 But since $u$ is a solution of \eqref{stathier2} and $d_1=0$, we have that $d_3R_3+d_5R_5+d_7R_7=0$, so the first term
in \eqref{uglyeqforRs} vanishes, and $Q_1$ and $Q_3$ are zero by virtue of Lemma \ref{recrel}.  Since $R_1'=0$, this proves \eqref{eqnforRhat}.

Multiplying \eqref{eqnforRhat} by $\hat R$ and integrating with
respect to $x$ gives
\begin{equation}
2\hat R'' \hat R -\hat R'^2 + 4(u+\zeta)\hat R^2 = P(\zeta),
\label{inteqnforRhat}
\end{equation}
where $P(\zeta)$ is a polynomial in $\zeta$ with coefficients that are independent of $x$.
From \eqref{R1357}, \eqref{zeroatinfty}, and \eqref{defhatRi} we
see that
\begin{equation}
\begin{aligned}
\lim_{x \to \infty} \hat R_0 &= d_3/2 \\
\lim_{x \to \infty} \hat R_1 &= d_5/2 \\
\lim_{x \to \infty} \hat R_2 &= d_7/2 = 1/2. \\
\end{aligned}
\label{hatRatinf}
\end{equation}
Also,
\begin{equation*}
\lim_{x \to \infty} \hat R_i'(x)=\lim_{x \to \infty}\hat
R_i''(x)=0 \quad \text{for $i=0,1,2$}.
\end{equation*}
Therefore, taking the limit of \eqref{inteqnforRhat} as $x \to
\infty$, we get that
\begin{equation}
P(\zeta)=\zeta(d_3+d_5\zeta + \zeta^2)^2. \label{P}
\end{equation}
Combining \eqref{inteqnforRhat} and \eqref{P} gives
\begin{equation}
2\hat R'' \hat R -\hat R'^2 + 4(u+\zeta)\hat R^2
=\zeta(d_3+d_5\zeta + \zeta^2)^2. \label{basiceqn}
\end{equation}
Let $C_1$ and $C_2$ denote the (possibly repeated) roots of
equation \eqref{c1c2}.  As roots of a polynomial with real
coefficients, $C_1$ and $C_2$ are either both real numbers or are
complex conjugates of each other, and we may assume they are
ordered so that $\Re C_1 \le \Re C_2$. Then
\begin{equation}
d_3 = C_1C_2\quad\text{and}\quad  d_5 = C_1+C_2,
 \label{mustdi}
\end{equation}
and so, by \eqref{basiceqn},
\begin{equation}
2\hat R'' \hat R -\hat R'^2 + 4(u+\zeta)\hat R^2
=\zeta(\zeta+C_1)^2(\zeta+C_2)^2. \label{PCi}
\end{equation}

Let us now view the function $\hat R(x,\zeta)$ as a polynomial in the complex variable
$\zeta$ with coefficients which are analytic functions of $x$.  Our next goal is to study the roots
of this polynomial.

First, observe that since $\hat R_0$ is, like $u$, analytic on
$\mathbf R$, then $\hat R_0$ is either identically zero on
$\mathbf R$ or has only isolated zeros. But if $\hat R_0$ is
identically zero on $\mathbf R$, then by \eqref{hatRatinf} we must
have $d_3=0$.   The equation $\hat R_0 =0$ in \eqref{defhatRi} is
then seen to take the form of \eqref{1soleqn}, with $C$ replaced
by $d_5$, and so it follows from Theorem \ref{1solthm} that $d_5
>0$ and $u$ is given by \eqref{casei}.  Notice also that
since $d_3=0$, $d_5$ is a positive root of \eqref{c1c2}.  Thus
statement (i) of the Theorem holds in this case.  Therefore, we
can, without loss of generality, assume that $\hat R_0$ has only
isolated zeros, and hence there exists an open interval $I
\subseteq \mathbf R$ such that $\hat R_0(x) \ne 0$ for all $x \in
I$. It then follows that for all $x \in I$, $\zeta = 0$ is not a
root of $\hat R(x,\zeta)$.

We claim that there exists at least one $x_0 \in I$ such that the
polynomial $\hat R(x_0,\zeta)$ has distinct roots $\zeta_1$ and
$\zeta_2$. For if this is not the case, then there exists a
function $\zeta_1(x)$ such that for all $x \in I$,
\begin{equation}
\hat R(x,\zeta)=\frac12 (\zeta - \zeta_1(x))^2.
 \label{doubleroot}
\end{equation}
From \eqref{doubleroot} and \eqref{defRhat} we have that
$\zeta_1(x)^2=2\hat R_0(x)$, and since $\hat R_0(x)$ is nonzero
for all $x \in I$ it follows that $\zeta_1(x)$ is analytic, and
hence differentiable, as a function of $x$.  Thus we can
differentiate \eqref{doubleroot} with respect to $x$ to obtain
\begin{equation*}
\hat R'(x,\zeta)=-(\zeta-\zeta_1(x))\zeta_1'(x)
\end{equation*}
for $x \in I$. But then substituting $\zeta=\zeta_1(x)$ into
\eqref{PCi} gives
\begin{equation*}
0 = \zeta_1(x)(\zeta_1(x)+C_1)^2(\zeta_1(x)+C_2)^2.
\end{equation*}
Since $\zeta_1(x) \ne 0$ for $x \in I$, it follows that for all $x
\in I$, either $\zeta_1(x)=-C_1$ or $\zeta_1(x)=-C_2$.  Since
$\zeta_1(x)$ is analytic on $I$, the set of points where
$\zeta_1(x)$ takes a given value must either be a discrete subset
of $I$, or consist of all of $I$.  Since the union of two discrete
subsets of $I$ cannot equal all of $I$, it must be that either
$\zeta_1(x) \equiv -C_1$ on $I$ or $\zeta_1(x) \equiv -C_2$ on
$I$. Hence \eqref{doubleroot} gives, for either $j=1$ or $j=2$,
\begin{equation*}
\hat R(x,\zeta)= \frac12 (\zeta^2 + 2C_j \zeta + C_j^2),
\end{equation*}
and so, by \eqref{defRhat},
\begin{equation*}
\hat R_1 = \frac{d_5}{2} - \frac14 u = C_j
\end{equation*}
holds for all $x \in I$.  But this implies that $u$ is constant on
$I$, and since $u$ is analytic on $\mathbf R$, then $u$ must be
constant on $\mathbf R$.  Then \eqref{zeroatinfty} gives that $u$
is identically zero, contrary to our assumption that $u$ is
nontrivial. Thus the claim has been proved.

It now follows from standard perturbation theory \cite{K} that, by
shrinking $I$ if necessary to a smaller neighborhood, we can
assume that there exist analytic, nonzero functions $\zeta_1$ and
$\zeta_2$ on $I$ such that $\zeta_1(x) \ne \zeta_2(x)$ and $\hat
R(x,\zeta_1(x))=\hat R(x,\zeta_2(x))=0$ for all $x \in I$. We
therefore have
\begin{equation}
\hat R(x,\zeta)=\frac12(\zeta-\zeta_1(x))(\zeta-\zeta_2(x)),
\label{rootsofR}
\end{equation}
for all $x \in I$. Also, since $\zeta_1$ and $\zeta_2$ are roots of a real polynomial, we have that either $\zeta_1$ and $\zeta_2$ are both
real on $I$ or $\zeta_1^\ast = \zeta_2$ on $I$.

Our goal in what follows is to obtain a second-order system of
differential equations for $\zeta_1$ and $\zeta_2$, which can
then be integrated explicitly to find $\zeta_1$ and $\zeta_2$.
Once this is accomplished, it is easy to recover $u$, since
\eqref{defRhat} and \eqref{rootsofR} imply that  $\zeta_1 +\zeta_2 = -2\hat R_1$,
and hence
\begin{equation}
u=2(\zeta_1 + \zeta_2 + d_5).  \label{uzetai}
\end{equation}

For $i=1,2$, we have $\hat R(x,\zeta_i(x)) =0$, and hence it
follows from \eqref{PCi} that
\begin{equation}
\hat R'(x,\zeta_i(x))^2 = -\zeta_i(\zeta_i+C_1)^2(\zeta_i+C_2)^2 .
\label{rprimeatzero}
\end{equation}
Differentiating \eqref{rootsofR} with respect to $x$, we obtain
$$
\hat
R'(x,\zeta)=-\frac12\left[(\zeta-\zeta_1)\zeta_2'(x)+(\zeta-\zeta_2)\zeta_1'(x)\right].
$$
Therefore
\begin{equation*}
\begin{aligned}
\hat R'(x,\zeta_1(x)) &= -\frac12
\left(\zeta_1(x)-\zeta_2(x)\right)\zeta_1'(x)\\
\hat R'(x,\zeta_2(x)) &= -\frac12
\left(\zeta_2(x)-\zeta_1(x)\right)\zeta_2'(x).
\end{aligned}
\end{equation*}
From \eqref{rprimeatzero}  we then have that
\begin{equation}
\begin{aligned}
(\zeta_1-\zeta_2)^2 (\zeta_1')^2 &= -4
\zeta_1(\zeta_1+C_1)^2(\zeta_1+C_2)^2\\
(\zeta_1-\zeta_2)^2 (\zeta_2')^2 &= -4 \zeta_2(\zeta_2+C_1)^2
(\zeta_2+C_2)^2.
\end{aligned}
\label{zetasys}
\end{equation}

Note that, since $C_1$ and $C_2$ are either both real numbers, or
are complex conjugates of one another, it follows from
\eqref{zetasys} that if $\zeta_1$ and $\zeta_2$ are real, they
must necessarily take negative values at all points of $I$.  Since
 $\zeta_1$ and $\zeta_2$ are nonzero functions on $I$, it follows that both
$\zeta_1$ and $\zeta_2$ map $I$ into the domain $\Omega$ in the
complex plane defined in \eqref{defOmega}.  Also, since
$\zeta_1-\zeta_2$ has no zeros in $I$, we have from
\eqref{zetasys} that
 \begin{equation}
 \begin{aligned}
\zeta_1'&=\frac{2i\theta_1(x)\sqrt{\zeta_1}(\zeta_1 +C_1)(\zeta_1+C_2)}{\zeta_1-\zeta_2}\\
\ &\ \\
 \zeta_2'&=\frac{2i\theta_2(x)\sqrt{\zeta_2}(\zeta_2+C_1)(\zeta_2+C_2)}{\zeta_1-\zeta_2}\
,
 \end{aligned}
\label{sys2}
\end{equation}
where $\theta_i(x) \in \{-1,1\}$ for $i=1,2$. Here, as throughout
the paper, we use $\sqrt{z}$ to denote the analytic branch of the
square root function on $\Omega$ defined after \eqref{defOmega}.

The change of variables from $u$ to $(\zeta_1,\zeta_2)$, which
reduces the stationary equation \eqref{stathier2} to the separable
system \eqref{sys2}, is due originally to Dubrovin in \cite{Du}
(see also \cite{DN, IM, N}, and chapter 12 of \cite{Di}).  These
authors use the same change of variables (or, more precisely, its
generalization to the case of general $N$) to, among other things,
determine the time evolution of finite-gap solutions of the
Korteweg-de Vries hierarchy.

Again using the analyticity of $\zeta_1$ and $\zeta_2$, and taking
$I$ smaller if necessary, we can reduce consideration to the
following two cases: either there exist $i, j \in \{1,2\}$ such
that
\begin{equation}
\zeta_i(x) + C_j = 0 \quad\text{for all $x \in I$},
\label{zetaipcjezero}
\end{equation}
or, for all $i,j \in \{1,2\}$,
\begin{equation}
\zeta_i(x) + C_j \ne 0 \quad\text{for all $x \in I$}.
\label{zetipcjnezero}
\end{equation}

Suppose \eqref{zetaipcjezero} holds, with for example $i=1$ and
$j=1$; the argument for other choices of $i$ and $j$ is exactly
similar. Then from \eqref{sys2} we obtain
\begin{equation}
\zeta_2'=-2i\theta_2(x)\sqrt{\zeta_2}(\zeta_2+C_2).
\label{diffeqcasei}
\end{equation}
We know that $\zeta_2$ is not identically equal to $-C_2$
on $I$, for otherwise \eqref{mustdi}, \eqref{uzetai}, and $\zeta_1 \equiv -C_1$
 would imply that $u$ is identically equal to
$0$ on $I$, and hence also on $\mathbf R$.  Therefore, by
taking $I$ smaller if necessary, we may assume that $\zeta_2$
is never equal to $-C_2$ on $I$.

It then follows from
\eqref{diffeqcasei} that $\theta_2(x)$ is analytic on $I$.
But since $\theta_2$ takes values in $\{-1,1\}$, the only way
this can happen is if $\theta_2$ is constant on $I$.  Setting
$v=i\theta_2\sqrt{\zeta_2}$ in \eqref{diffeqcasei}, we  obtain
\begin{equation*}
\frac{v'}{C_2-v^2}= 1,
\end{equation*}
which is equation \eqref{vprime} with $C$ replaced by $C_2$.

Moreover, we also know that $C_1$ and $C_2$ must
be real, since otherwise we would have $C_2 = C_1^\ast$, and together with $\zeta_1 \equiv -C_1$
and $\zeta_1 = \zeta_2^\ast$ this would imply $\zeta_2 \equiv -C_2$ on $I$.   Since $C_1$ is
real, then $\zeta_1 \equiv -C_1$ implies that $\zeta_1$ is real, so $\zeta_2$ must also be real on $I$.
Since we also know that $\zeta_2$ is never equal to zero on $I$, it
follows from \eqref{zetasys} that $\zeta_2 <0$ on $I$. Hence $v$
is real-valued on $I$.   Therefore we can reprise the proof of
Theorem \ref{1solthm} from \eqref{vprime} onwards, replacing $C$
by $C_2$ throughout (notice that by \eqref{uzetai} we have
$u=2(\zeta_2 + C_2)=2(C_2-v^2)$ in this case as well).   The
conclusion is that $C_2 >0$ and that
\begin{equation*}
u=\frac{2C_2}{\cosh^2(\sqrt C_2 x + K)}.
\end{equation*}
In light of the remark following Theorem \ref{1solthm}, we obtain
the formula \eqref{casei} for $u$, and this must hold on the
entire line.  Thus the statement of the theorem is proved in this
case.

Therefore we may assume henceforth that \eqref{zetipcjnezero}
holds for all $i,j\in\{1,2\}$. In this case, the right-hand sides
of both equations in \eqref{sys2} are never zero on $I$, so as in
the preceding cases it follows from \eqref{sys2} that $\theta_1$
and $\theta_2$ are both analytic and hence constant functions on
$I$, with value either $-1$ or $1$. Set
\begin{equation}
\begin{aligned}
v_1 & = -i\theta_1 \sqrt{\zeta_1}\\
v_2 &=   i\theta_2\sqrt{\zeta_2},
\end{aligned}
\label{defvi}
\end{equation}
and define $\alpha_j$, for $j=1,2$, to be complex numbers such
that
\begin{equation}
\alpha_j^2 = C_j. \label{defalphaj}
\end{equation}
For definiteness we will choose $\alpha_j$ to be the square root
of $C_j$ given by $\alpha_j = |C_j|^{1/2}e^{i\theta/2}$, where
$-\pi < \theta \le \pi$ and $C_j=|C_j|e^{i\theta}$.  In
particular, this choice guarantees that if $C_1^\ast=C_2$, then
$\alpha_1^\ast=\alpha_2$.

Since $\zeta_1$ and $\zeta_2$ are nonzero on $I$, so are $v_1$
and $v_2$.  Also, as noted
 above after \eqref{zetasys}, either $\zeta_1$ and $\zeta_2$ are
 both negative at all points of $I$, or $\zeta_1^\ast = \zeta_2$
 on $I$.  In the former case, we have that $v_1$ and $v_2$ are real-valued on $I$.  In the latter case,
 we see by taking  the conjugate of the first equation in \eqref{sys2}, comparing the result to the
 second equation in \eqref{sys2}, and using the fact that $\sqrt{z^\ast}=-(\sqrt z)^\ast$ on $\Omega$, that $\theta_1 =-\theta_2$ on $I$.  Therefore from \eqref{defvi}
 we obtain that $v_1^\ast=v_2$ on $I$.

We can now rewrite \eqref{sys2} as the following system for $v_1$
and $v_2$:
 \begin{equation}
 \begin{aligned}
v_1'&=\frac{(\alpha_1^2-v_1^2)(\alpha_2^2-v_1^2)}{v_2^2-v_1^2}\\
 v_2'&=\frac{(\alpha_1^2-v_2^2)(\alpha_2^2-v_2^2)}{v_1^2-v_2^2},
 \end{aligned}
\label{sys4}
\end{equation}
where either $v_1$ and $v_2$ are both real-valued on $I$, or
$v_1^\ast = v_2$ on $I$. Choose $x_0 \in I$, and define
\begin{equation}
\begin{aligned}
V_1&=v_1(x_0)\\
V_2&=v_2(x_0).
\end{aligned}
\label{sys4init}
\end{equation}
(For future reference we note that, as values of $v_1$ and $v_2$,
$V_1$ and $V_2$ must both be nonzero, and either $V_1$ and $V_2$
both real or $V_1^\ast = V_2$.) Recalling that $v_1^2 \ne v_2^2$
on $I$, we have that the right-hand sides of the equations in
\eqref{sys4} define analytic functions of $v_1$ and $v_2$ on $I$.
Therefore the system \eqref{sys4}, together with the initial data
\eqref{sys4init}, uniquely determines $v_1$ and $v_2$ on some
neighborhood of $x_0$ on $I$. Furthermore, from $v_1$ and $v_2$
one can then recover $u$ via \eqref{uzetai} as
\begin{equation}
u(x)=2(-v_1^2(x)-v_2^2(x)+\alpha_1^2+\alpha_2^2). \label{recovufromv}
\end{equation}

We will complete the proof of Theorem \ref{2solthm} by explicitly solving the initial-value problem
\eqref{sys4} and \eqref{sys4init} for $v_1$ and $v_2$, and then showing that, of the functions $u$ which arise
from these solutions via \eqref{recovufromv}, the only ones which extend to $H^1$ functions on $\mathbf R$ are those
given by \eqref{desiredu}.

The system \eqref{sys4} can be integrated by separating the
variables $v_1$ and $v_2$.  Since \eqref{zetipcjnezero} holds on
$I$, we have that $v_i(x) \ne \alpha_j$ on $I$ for $i,j \in
\{1,2\}$.  This allows us to rewrite \eqref{sys4} in the form
\begin{equation}
\begin{aligned}
\frac{v_1'}{(\alpha_1^2-v_1^2)(\alpha_2^2-v_1^2)}+\frac{v_2'}{(\alpha_1^2-v_2^2)(\alpha_2^2-v_2^2)}&=0\\
\noalign{\vskip2mm}
\frac{-v_1^2v_1'}{(\alpha_1^2-v_1^2)(\alpha_2^2-v_1^2)}+\frac{-v_2^2v_2'}{(\alpha_1^2-v_2^2)(\alpha_2^2-v_2^2)}&=1.
\end{aligned}
\label{visyst}
\end{equation}

 To compute the solutions of
\eqref{visyst} in the cases when the quantities $\alpha_i^2$
coincide or are zero, it will be helpful to consider as well a
system in which the values of the constants $\alpha_i$ are
slightly perturbed. For any positive number $\delta$ and complex
number $z_0$, let $B_{\delta}(z_0)$ denote the open ball of radius
$\delta$ centered at $z_0$ in $\mathbf C$. Choose $\delta$ to be
any positive number such that $\delta < \frac14 \min\{|\alpha_i -
V_j|: i, j = 1,2\}$ and $\delta < |V_1-V_2|$. For each $j \in
\{1,2\}$, we define functions $G_j=G_j(\beta_1,\beta_2,v)$ and
$H_j=H_j(\beta_1,\beta_2,v)$ on $B_{\delta}(\alpha_1)\times
B_{\delta}(\alpha_2)\times B_{\delta}(V_j)$ by
\begin{equation}
\begin{aligned}
G_j(\beta_1,\beta_2,v)&=\int_{V_j}^v\frac{dw}{(\beta_1^2-w^2)(\beta_2^2-w^2)}\\
H_j(\beta_1,\beta_2,v)&=\int_{V_j}^v\frac{-w^2dw}{(\beta_1^2-w^2)(\beta_2^2-w^2)}\
.
\end{aligned}
\label{gjhj}
\end{equation}
From the definition of $\delta$ we know that the integrands in
\eqref{gjhj} are nonsingular functions of $w$ on
$B_{\delta}(V_j)$, so $G_j$ and $H_j$ are well-defined and are
analytic functions of $\beta_1$, $\beta_2$, and $v$ on their
domains,  as long as the integrals in their definition are taken
over paths from $V_j$ to $v$ which lie within $B_{\delta}(V_j)$.
We can then define $F:\mathbf C^2 \times \mathbf C^2 \times
\mathbf R \to \mathbf C^2$, with domain
$U_F=B_{\delta}(\alpha_1)\times B_{\delta}(\alpha_2)\times
B_{\delta}(V_1)\times B_{\delta}(V_2)\times \mathbf R$, by setting
\begin{equation}
F(\beta_1,\beta_2,v_1,v_2,x) =\left[\begin{matrix}
F_1\\
F_2
\end{matrix}\right]
=\left[\begin{matrix}
G_1(\beta_1,\beta_2,v_1)+G_2(\beta_1,\beta_2,v_2)\\
H_1(\beta_1,\beta_2,v_1)+H_2(\beta_1,\beta_2,v_2)-x+x_0
\end{matrix}\right].
\label{fdef}
\end{equation}

\begin{lem}
There exist numbers $\delta_1 \in (0,\delta)$ and $\delta_2 >0$
such that for each $(\beta_1,\beta_2,x)$ in
$B_{\delta_1}(\alpha_1)\times B_{\delta_1}(\alpha_2)\times
\{|x-x_0|< \delta_1\} \subseteq \mathbf C^2 \times \mathbf R$,
there is a unique pair $(v_1,v_2)$ in $B_{\delta_2}(V_2)\times
B_{\delta_2}(V_2) \subseteq \mathbf C^2$ satisfying
\begin{equation}
F(\beta_1,\beta_2,v_1,v_2,x)=0.
\label{v1v2epseqn}
\end{equation}
The functions $v_1(\beta_1,\beta_2,x)$ and
$v_2(\beta_1,\beta_2,x)$ so defined are analytic functions of
their arguments, and for each $(\beta_1,\beta_2) \in
B_{\delta_1}(\alpha_1)\times B_{\delta_1}(\alpha_2)$, are
solutions of the system of ordinary differential equations
\begin{equation}
\begin{aligned}
\frac{v_1'}{(\beta_1^2-v_1^2)(\beta_2^2-v_1^2)}+\frac{v_2'}{(\beta_1^2-v_2^2)(\beta_2^2-v_2^2)}&=0\\
\noalign{\vskip2mm}
\frac{-v_1^2v_1'}{(\beta_1^2-v_1^2)(\beta_2^2-v_1^2)}+\frac{-v_2^2v_2'}{(\beta_1^2-v_2^2)(\beta_2^2-v_2^2)}&=1,
\end{aligned}
\label{visystbeta}
\end{equation}
on $\{|x-x_0|<\delta_1\}$, with initial conditions
\begin{equation}
\begin{aligned}
v_1(\beta_1,\beta_2,x_0)&=V_1\\
v_2(\beta_1,\beta_2,x_0)&=V_2.
\end{aligned}
\label{viinitcond}
\end{equation}
\label{impfnv}
\end{lem}

\begin{proof}  A computation of the Jacobian of $F$ with respect to
the variables $v_1$ and $v_2$, reveals that, at all points
$P=(\beta_1,\beta_2,v_1,v_2,x)$ in the domain $U_F$ of $F$, we
have
\begin{equation}
\begin{aligned}
\nabla_{v_1,v_2} F(P)&= \left.\left[
\begin{matrix}
\partial F_1/\partial v_1 & \partial F_1/\partial v_2 \\
\partial F_2/\partial v_1 & \partial F_2/\partial v_2
\end{matrix}
\right]\right|_P\\
 & =
\begin{bmatrix}
\dfrac{1}{(\beta_1^2 - v_1^2)(\beta_2^2 - v_1^2)} &
\dfrac{1}{(\beta_1^2 - v_2^2)(\beta_2^2 - v_2^2)}\\[12pt]
\dfrac{-v_1^2}{(\beta_1^2 - v_1^2)(\beta_2^2 - v_1^2)} &
\dfrac{-v_2^2}{(\beta_1^2 - v_2^2)(\beta_2^2 - v_2^2)}
\end{bmatrix}
  .
  \end{aligned}
  \label{jacf}
\end{equation}
The determinant of the matrix in \eqref{jacf} is
$(v_1^2-v_2^2)/\prod_{i,j=1,2}(\beta_i^2 - v_j^2)$,
and is therefore nonzero for all $P\in U_F$.  In particular, when
$P_0=(\alpha_1,\alpha_2,V_1,V_2,x_0)$ we have that
$\nabla_{v_1,v_2} F(P_0)$ is an invertible map from $\mathbf C^2$
to $\mathbf C^2$. Moreover, $F(P_0)=0$.  The assertions of the
Lemma concerning the existence, uniqueness, and analyticity of the
functions $v_1$ and $v_2$ which satisfy \eqref{v1v2epseqn}
therefore follow from the Implicit Function Theorem (cf.\ \S 15 of
\cite{De}). Equations \eqref{visystbeta} then follow by
differentiating both sides of \eqref{v1v2epseqn} with respect to
$x$.  The initial conditions \eqref{viinitcond} are a consequence
of the uniqueness assertion for the $v_i$ and the fact that
\begin{equation*}
F(\beta_1,\beta_2,V_1,V_2,x_0)=0
\end{equation*}
for each $(\beta_1,\beta_2) \in B_{\delta_1}(\alpha_1)\times
B_{\delta_1}(\alpha_2)$.
\end{proof}

Motivated by \eqref{recovufromv}, we now define, for each
$(\beta_1,\beta_2,x) \in B_{\delta_1}(\alpha_1)\times
B_{\delta_1}(\alpha_2)\times\{|x-x_0|<\delta_1\}$,
\begin{equation}
u(\beta_1,\beta_2,x)=2(-v_1(\beta_1,\beta_2,x)^2-v_2(\beta_1,\beta_2,x)^2+\beta_1^2+\beta_2^2).
\label{ub1b2}
\end{equation}

\begin{cor} The solution $u$ of \eqref{stathier2} described in
Theorem \ref{2solthm} is related to the functions
$u(\beta_1,\beta_2,x)$ by
\begin{equation}
u(x)=u(\alpha_1,\alpha_2,x)=\lim_{(\beta_1,\beta_2) \to
(\alpha_1,\alpha_2)} u(\beta_1,\beta_2,x) \label{u0limueps}
\end{equation} for all
$x$ such that $|x-x_0|<\delta_1$.
\label{ualphaandbeta}
\end{cor}

\begin{proof} By Lemma \ref{impfnv}, the functions $v_1(\alpha_1,\alpha_2,x)$ and $v_2(\alpha_1,\alpha_2,x)$
satisfy \eqref{visyst} for $|x-x_0|< \delta_1$, and therefore, since $\nabla_{v_1,v_2}F(P_0)$ is invertible, also satisfy
\eqref{sys4}.  Comparing the initial conditions \eqref{viinitcond}
and \eqref{sys4init}, we see from the uniqueness of the solutions
of the initial value problem for \eqref{sys4} that
$(v_1(x),v_2(x))=(v_1(\alpha_1,\alpha_2,x),v_2(\alpha_1,\alpha_2,x))$ for all $x$ in
some neighborhood of $x_0$.  Putting
$(\beta_1,\beta_2)=(\alpha_1,\alpha_2)$ in \eqref{ub1b2} and
comparing with \eqref{recovufromv}, we see that $u(x)$ then agrees
with $u(\alpha_1,\alpha_2,x)$ on this neighborhood.  Since $u(x)$
is analytic on $\mathbf R$ and $u(\alpha_1,\alpha_2,x)$ is
analytic on $\{|x-x_0|<\delta_1\}$, they must agree on all of
$\{|x-x_0|<\delta_1\}$.  Finally, the assertion that $u(x)$ is
given by the limit in \eqref{u0limueps} follows from the fact that
$u(\beta_1,\beta_2,x)$ is analytic and hence continuous in
$\beta_1$ and $\beta_2$.
\end{proof}

The next step in the proof of Theorem \ref{2solthm} is to
explicitly determine the functions $u(\beta_1,\beta_2,x)$ defined
in \eqref{ub1b2}. Generically, we will have that $\beta_1^2$ and
$\beta_2^2$ are distinct and nonzero, even if this is not true
when $\beta_1=\alpha_1$ and $\beta_2 = \alpha_2$. In this case, we
have the following Lemma.

\begin{lem} Suppose $(\beta_1,\beta_2,x) \in
B_{\delta_1}(\alpha_1)\times B_{\delta_1}(\alpha_2)\times
\{|x-x_0|< \delta_1\}$, and suppose that $\beta_1^2 \ne
\beta_2^2$, and $\beta_j \ne 0$ for $j=1,2$.  Then
\begin{equation}
u(\beta_1,\beta_2,x)=2(D'/D)', \label{soln1}
\end{equation}
where
\begin{equation}
D=D(\beta_1,\beta_2,x)=\left|
\begin{matrix}
y_1 & y_2\\
y_1' & y_2'\end{matrix} \right|, \label{dneq2}
\end{equation}
with
\begin{equation}
y_j=\sinh (\beta_j(x-x_0) + M_j) \label{yjsinhaj}
\end{equation}
for $j=1,2$.  Here  $M_1$ and $M_2$ can be taken to be any complex
numbers satisfying
\begin{equation}
e^{2M_j}=\frac{(\beta_j+V_1)(\beta_j+V_2)}{(\beta_j-V_1)(\beta_j-V_2)}
\label{defmj}
\end{equation}
for $j=1,2$. \label{gensol}
\end{lem}

\begin{proof}
Under the stated assumptions on $(\beta_1,\beta_2,x)$, we have,
for $v \in B_{\delta}(V_j)$,
\begin{equation}
\begin{aligned}
G_j(\beta_1,\beta_2,v)&=\frac{1}{\beta_2^2-\beta_1^2}\left[L_{\beta_1,V_j}(v)-L_{\beta_2,V_j}(v)\right]\\
\noalign{\vskip2mm}
H_j(\beta_1,\beta_2,z)&=\frac{1}{\beta_2^2-\beta_1^2}\left[-\beta_1^2L_{\beta_1,V_j}(v)+\beta_2^2L_{\beta_2,V_j}(v)
\right],
\end{aligned}
\label{defGH}
\end{equation}
where $L_{\beta,V}$ is defined in \eqref{defG0}. Let $v_j$ denote
$v_j(\beta_1,\beta_2,x)$ for $j=1,2$, and define
\begin{equation}
w_j = 2 \beta_j L_{\beta_j,V_1}(v_1) + 2 \beta_j L_{\beta_j,
V_2}(v_2). \label{defwj}
\end{equation}
Substituting \eqref{defGH} into \eqref{fdef}, we see that
\eqref{v1v2epseqn} can be rewritten as  a linear system for $w_1$
and $w_2$:
\begin{equation*}
\begin{aligned}
\beta_2 w_1 - \beta_1 w_2 & = 0\\
-\beta_1 w_1 + \beta_2 w_2 &= 2(\beta_2^2 - \beta_1^2) (x-x_0).
\end{aligned}
\end{equation*}
Since $\beta_1 \ne \beta_2$, the system has a unique solution, given by
\begin{equation}
w_j=2\beta_j (x-x_0)\quad \text{for $j=1,2$.} \label{solnforwi}
\end{equation}

Now from \eqref{eL} and \eqref{defwj} we have that
\begin{equation*}
e^{w_j} = \left(\frac{\beta_j + v_1}{\beta_j -
v_1}\right)\left(\frac{\beta_j + v_2}{\beta_j - v_2}\right)
\left(\frac{\beta_j - V_1}{\beta_j +
V_1}\right)\left(\frac{\beta_j - V_2}{\beta_j + V_2}\right).
\end{equation*}
Therefore after exponentiating both sides of \eqref{solnforwi},
we obtain, for $j=1,2$,
\begin{equation}
 \frac{(\beta_j+v_1)(\beta_j+v_2)}{(\beta_j-v_1)(\beta_j-v_2)}
 = \frac{(\beta_j+V_1)(\beta_j+V_2)}{(\beta_j-V_1)(\beta_j-V_2)}\ e^{2\beta_j(x-x_0)}=e^{2(\beta_j(x-x_0)+M_j)},
\label{defP12}
\end{equation}
where $M_1$ and $M_2$ are any complex numbers such that
\eqref{defmj} holds.

If we now define
\begin{equation*}
\begin{aligned}
W_1&=-(v_1+v_2) \\
W_0&=v_1v_2,
\end{aligned}
\end{equation*}
then \eqref{defP12} can be rewritten as
\begin{equation}
\left[
\begin{matrix}
y_1 & y_1'\\
y_2 & y_2'\end{matrix} \right] \left[\begin{matrix} W_0 \\
W_1\end{matrix}\right]=-\left[\begin{matrix} y_1'' \\
y_2''\end{matrix}\right], \label{mateqnforWi}
\end{equation}
where $y_j$ is given by \eqref{yjsinhaj} for $j=1,2$. Solving \eqref{mateqnforWi} by
Cramer's rule, we find that $W_1$ and $W_0$ are given by
\eqref{defWi} and \eqref{defW0}, with $N=2$. Therefore we can
conclude from Corollary \ref{nicecor} that
\begin{equation}
W_1' = W_1^2-2W_0-\beta_1^2-\beta_2^2. \label{w0w1}
\end{equation}

On the other hand, from \eqref{ub1b2} we have
\begin{equation}
u(\beta_1,\beta_2,x)=2(2W_0-W_1^2+\beta_1^2+\beta_2^2),
\label{recoveru}
\end{equation}
 and from \eqref{w0w1} and \eqref{recoveru} it then follows that
$u(\beta_1,\beta_2,x)=-2W_1'$.  By \eqref{defWi}, we have $W_1' = -D'/D$, with $D$ given by
\eqref{dneq2}.  Then \eqref{soln1} follows.
\end{proof}

We now determine the conditions on $\beta_1$ and $\beta_2$ under
which \eqref{soln1} defines a nonsingular solution on $\mathbf R$,
or in other words, under which $D(\beta_1,\beta_2,x)$ has no
zeroes on $\mathbf R$.

\begin{lem}   Suppose $(\beta_1,\beta_2) \in
B_{\delta_1}(\alpha_1)\times B_{\delta_1}(\alpha_2)$, and suppose
that $\beta_1^2 \ne \beta_2^2$, and $\beta_j \ne 0$ for $j=1,2$.
Suppose also that $0 \le \Re \beta_1 \le \Re \beta_2$, and either
$\beta_1^2$ and $\beta_2^2$ are both real numbers, or
$\beta_1^\ast=\beta_2$.

If, as a function of $x$, $u(\beta_1,\beta_2,x)$ can be
analytically continued to an analytic function on the entire real
line, then  $0 < \beta_1 < \beta_2$, and there exist numbers $a_1$
and $a_2$ with $a_2 < 0 < a_1$ such that
\begin{equation*}
u(\beta_1, \beta_2, x) = \psi^{(2)}(x; a_1, a_2; \beta_1, \beta_2)
\end{equation*}
on $\mathbf R$.
 \label{uis2sol}
\end{lem}

\begin{proof} Observe that, according to Lemma \ref{gensol}, if $u(\beta_1,\beta_2,x)$
can be analytically continued to an analytic function on the real
line, then the function $D(\beta_1,\beta_2,x)$ defined in
\eqref{dneq2}--\eqref{defmj} must be nonzero for all $x \in
\mathbf R$.  In fact, from \eqref{soln1} it is easy to see that,
at any point $x$ where $D(\beta_1,\beta_2,x)$ has a zero,
$u(\beta_1,\beta_2,x)$ will have a pole of order two.

If $\beta_1^\ast = \beta_2$, then since either $V_1$ and $V_2$ are
both real or $V_1^\ast = V_2$, we see from \eqref{defmj} that we
can choose $M_1$ and $M_2$ so that $M_1^\ast=M_2$. Define $K_j =
-\beta_j x_0 + M_j$ and
\begin{equation*}
A_j = \beta_j(x-x_0)+M_j = \beta_j x + K_j,
\end{equation*}
so that $A_1^\ast = A_2$ and, by \eqref{yjsinhaj}, $y_j= \sinh
A_j$ for $j= 1,2$.  Let $a=\Re(\beta_1)$, $b=\Im(\beta_1)$,
$P=\Re(K_1)$, and $Q=\Im(K_1)$. Then from \eqref{dneq2}, we have
\begin{equation*}
\begin{aligned}
D&=\beta_2\sinh(A_1)\cosh(A_2)-\beta_1\cosh(A_1)\sinh(A_2)\\
&= \beta_1^\ast\sinh(A_1)\cosh(A_1^\ast)-\beta_1\cosh(A_1)\sinh(A_1^\ast)\\
&=2\Im \left(\beta_1^\ast \sinh(A_1)\cosh(A_1^\ast)\right)\\
&=a\sin(2(bx+Q))-b\sinh(2(ax+P)).
\end{aligned}
\end{equation*}
Therefore $D$ changes sign as $x$ goes from large negative values
to large positive values, and hence $D$ must equal zero for some
$x \in \mathbf R$.  Therefore, as remarked above,
$u(\beta_1,\beta_2,x)$ cannot be continued to an analytic function
on $\mathbf R$.

Next, suppose $\beta_1^2$ and $\beta_2^2$ are real and of opposite
sign, say $\beta_1^2 < 0 < \beta_2^2$. In this case $\beta_1$ is
purely imaginary, say $\beta_1 = ib$, and $\beta_2$ is real.  Then
when $M_1$ and $M_2$ are chosen so that \eqref{defmj} holds, it
follows that $(e^{2M_1})^\ast=e^{-2M_1}$ and
$(e^{2M_2})^\ast=e^{2M_2}$. We can thus take $M_1$ to be purely
imaginary, and $M_2$ so that either $M_2$ is real or $\Im M_2 =
\pi/2$. Now the same arguments as in equations \eqref{e2areal1} to
\eqref{pureimconj2} in Section \ref{sec:1solconverse} show that
\begin{equation}
y_1=i\sin(bx+K_1)
\label{2negy1}
\end{equation}
for some real $K_1$, and either
\begin{equation}
y_2=\sinh(\beta_2 x +K_2) \label{2negy2a}
\end{equation}
or
\begin{equation}
y_2=i\cosh(\beta_2 x + K_2) \label{2negy2b}
\end{equation}
for some real $K_2$. If \eqref{2negy1} and \eqref{2negy2a} hold, the equation $D=0$
becomes
\begin{equation*}
\frac{\tan(bx+K_1)}{b}=\frac{\tanh(\beta_2x+K_2)}{\beta_2},
\end{equation*}
which  has infinitely many solutions on $\mathbf R$. If, on the
other hand, \eqref{2negy1} and \eqref{2negy2b} hold, then the equation $D=0$ becomes
\begin{equation*}
\frac{\tan(bx+K_1)}{b}=\frac{\coth(\beta_2x+K_2)}{\beta_2},
\end{equation*}
which again has infinitely many solutions.

Now suppose that $\beta_1^2$ and $\beta_2^2$ are both negative,
say $\beta_1^2 < \beta_2^2 < 0$. Then $\beta_1$ and $\beta_2$ are
both purely imaginary, say $\beta_j = ib_j$ for $j=1,2$, and in \eqref{defmj}
we can choose $M_1$ and $M_2$ to be purely imaginary.  Thus
\begin{equation*}
y_j=i\sin(b_j x+K_j)
\end{equation*}
on $I$ for $j=1,2$, where $K_1$ and $K_2$ are real, so $D$ has
a zero at any point $x \in \mathbf R$ which satisfies the equation
\begin{equation}
\frac{\tan(b_1x+K_1)}{b_1}=\frac{\tan(b_2x+K_2)}{b_2}.
\label{twotans}
\end{equation}
Since $\beta_1^2 \ne \beta_2^2$, then $b_1 \ne b_2$, in which case
it is easy to see that equation \eqref{twotans} always has
solutions.

We have now shown that, under the stated assumptions on $\beta_1$
and $\beta_2$, $u(\beta_1,\beta_2,x)$ can be continued to an
analytic function on $\mathbf R$ only if $\beta_1^2$ and
$\beta_2^2$ are both positive, with therefore $0 < \beta_1 <
\beta_2$. In this case, when $M_1$ and $M_2$ are chosen to satisfy
\eqref{defmj}, we will have that $(e^{2M_j})^\ast=e^{2M_j}$ for
$j=1,2$.  Then, as in \eqref{2negy2a} and \eqref{2negy2b}, for
each $j$ we have that either
\begin{equation*}
y_j = \sinh(\beta_j x + K_j)
\end{equation*}
or
\begin{equation*}
y_j = i \cosh(\beta_j x + K_j),
\end{equation*}
where $K_1$ and $K_2$ are real.  There are therefore four cases to consider, which, according to
Definition \ref{defpsin}, can be summarized as follows:
\begin{equation*}
u=\psi^{(2)}(x;a_1,a_2;\beta_1,\beta_2),
\end{equation*}
where $a_j$ are nonzero numbers given by $a_j = \pm
e^{-2K_j}$ for $j=1,2$.

If $a_1$ and $a_2$ are both positive, then $D=0$ holds at any
point $x$ where
\begin{equation}
\beta_2 \tanh(\beta_2 x + K_2)-\beta_1 \tanh(\beta_1 x +K_1)=0,
\label{a1a2pos}
\end{equation}
and since the left-hand side of \eqref{a1a2pos} changes sign from
negative to positive as $x$ increases from large negative values
to large positive values, there must exist solutions to
\eqref{a1a2pos} in $\mathbf R$. Similarly, if $a_1$ and $a_2$ are
both negative, then $D=0$ when
\begin{equation*}
\beta_1 \tanh(\beta_2 x + K_2)-\beta_2 \tanh(\beta_1 x +K_1)=0,
\end{equation*}
which again must have at least one solution in $\mathbf R$.  If
$a_1<0<a_2$, then $D=0$  when
\begin{equation*}
\tanh(\beta_1 x+K_1)\tanh(\beta_2 x + K_2) =
\frac{\beta_1}{\beta_2}
\end{equation*}
which must have a solution, since the fraction on the right-hand
side is between 0 and 1, and the function on the left-hand side
attains the value zero at the points $x=-K_j/\beta_j$, $j=1,2$,
and approaches 1 as $x \to \pm \infty$.

Finally, if $a_2 < 0 < a_1$, then $D = 0$ only if
\begin{equation}
\tanh(\beta_1x+K_1)\tanh(\beta_2 x + K_2) =
\frac{\beta_2}{\beta_1}. \label{a1nega2pos}
\end{equation}
But \eqref{a1nega2pos} has no solutions, since the fraction on the
right-hand side is greater than 1, and the function on the
left-hand side always takes values less than 1. (Alternatively, in
this final case we could deduce from Lemma \ref{condforDpos} that
$D$ is never 0 on $\mathbf R$.)
\end{proof}

\subsection{The nondegenerate case}
\label{subsec: nondegencase}

In this subsection we consider the case in which the roots $C_1$
and $C_2$ of \eqref{c1c2} are nondegenerate: that is, when $C_1
\ne C_2$, $C_1 \ne 0$, and $C_2 \ne 0$. Recall that, as mentioned
following equation \eqref{basiceqn}, we can assume that either
$C_1$ and $C_2$ are both real, with $C_1 < C_2$, or $C_1^\ast =
C_2$. From our definition of $\alpha_j$ in \eqref{defalphaj} and
the remarks following, we have then that $\alpha_1$ and $\alpha_2$
are distinct and both nonzero, with $0 \le \Re \alpha_1 \le \Re
\alpha_2$; and either $\alpha_1^2$ and $\alpha_2^2$ are both real,
with $\alpha_1^2 < \alpha_2^2$, or $\alpha_1^\ast=\alpha_2$.

According to Corollary \ref{ualphaandbeta},
$u(x)=u(\alpha_1,\alpha_2,x)$ on some neighborhood of $x_0$, and
therefore $u(\alpha_1,\alpha_2,x)$ can be analytically continued
to a function which is analytic on all of $\mathbf R$.  It then
follows from Lemma \ref{uis2sol} that $0 < \alpha_1 < \alpha_2$,
and
\begin{equation*}
u(x)=\psi^{(2)}(x;a_1,a_2;\alpha_1,\alpha_2)
\end{equation*}
for some numbers $a_1$ and $a_2$ with $0<a_1<a_2$.

We have therefore proved Theorem \ref{2solthm} in the
nondegenerate case, by showing that the only possible solutions
in this case are given by \eqref{desiredu}.

To complete the proof of Theorem \ref{2solthm}, it remains to show
that the degenerate cases, when $C_1$ and $C_2$ can coincide or
vanish, cannot arise under the assumption that \eqref{stathier2}
has a nontrivial solution $u(x)$ in $H^2(\mathbf R)$.  Since any
$H^2$ solution must be analytic on $\mathbf R$, to accomplish this
it is enough to show that in the degenerate cases, any locally
analytic solution of \eqref{stathier2} extends analytically to a
function with a singularity on $\mathbf R$.

\subsection{The degenerate case $C_1 = C_2 \ne 0$}
\label{subsec: doublenonzeroroot}

In this subsection we consider the case when equation \eqref{c1c2}
has a nonzero double root, so that $C_1=C_2 =C\ne 0$ in
\eqref{visyst}.  In this case, $C$ must be real.  Define $\alpha$
to be a square root of $C$, following the convention for choice of
square roots set after \eqref{defalphaj}. From Corollary
\ref{ualphaandbeta}  we have that
\begin{equation}
u(x)=u(\alpha,\alpha,x)=\lim_{\epsilon \to
0}u(\alpha,\alpha+\epsilon,x) \label{uceqlim}
\end{equation}
for $|x-x_0|< \delta_1$.

We now compute the limit in \eqref{uceqlim}.  Since $\alpha \ne
0$, by taking $\epsilon$ positive and sufficiently small, we may
assume that $\beta_1=\alpha$ and $\beta_2 =\alpha +\epsilon$
satisfy the hypotheses of Lemma \ref{gensol}. We thus obtain that,
for $|x-x_0|<\delta_1$,
\begin{equation}
u(\alpha,\alpha+\epsilon,x)=2(D(\alpha,\alpha+\epsilon,x)'/D(\alpha,\alpha+\epsilon,x))'
\label{uepssoln}
\end{equation}
where for all complex numbers $s$ and $t$ we define $D(s,t,x)$ by
\begin{equation}
D(s,t,x)=t\sinh A(s,x)\cosh A(t,x) - s \sinh
A(t,x)\cosh A(s,x),
\label{defDst}
\end{equation}
with
\begin{equation*}
A(s,x)=s(x-x_0)+M(s)
\end{equation*}
and
\begin{equation}
e^{M(s)}=\frac{(s+V_1)(s+V_2)}{(s-V_1)(s-V_2)}. \label{ems}
\end{equation}
Since the right side of \eqref{ems} is nonzero for $s=\alpha$, we
may assume that $M(s)$ is defined and analytic for $s$ in some
neighborhood of $\alpha$.

Observe that for $\epsilon >0$ we can rewrite equation \eqref{uepssoln} in the form
\begin{equation}
u(\alpha,\alpha+\epsilon,x)=2(D_1(\epsilon,x)'/D_1(\epsilon,x))',
\label{uepsxd}
\end{equation}
where
\begin{equation*}
D_1(\epsilon,x)=\frac{D(\alpha,\alpha+\epsilon,x)}{\epsilon}=\frac{D(\alpha,\alpha+\epsilon,x)-D(\alpha,\alpha,x)}{\epsilon}.
\end{equation*}
Let us define
\begin{equation*}
D_1(0,x) =  \lim_{\epsilon \to
0}D_1(\epsilon,x)=\left. \frac{\partial }{\partial
\epsilon}\right|_{\epsilon=0}D(\alpha,\alpha+\epsilon,x).
\end{equation*}
 Computing the derivative gives
\begin{equation}
  D_1(0,x)
=\frac12\sinh (2\alpha(x-x_0)+2M(\alpha)) -\alpha(x-x_0)-\alpha
M'(\alpha)
\label{tildedzx}
\end{equation}
for all $x \in \mathbf R$.  Choose $I_1$ to be any nonempty
subinterval of $(x_0 - \delta_1,x_0 + \delta_1)$ such that
$D_1(0,x) \ne 0$ on the closure of $I_1$.  Then, as $\epsilon$
goes to $0$, $D_1(\epsilon,x)$ will converge to $D_1(0,x)$
uniformly on $I_1$, and the derivatives with respect to $x$ of
$D_1(\epsilon,x)$ will converge to the corresponding derivatives
of $D_1(0,x)$ uniformly on $I_1$ as well.  It then follows from
\eqref{uceqlim} and \eqref{uepsxd} that
\begin{equation}
u(x) = 2(D_1(0,x)'/D_1(0,x))' \label{ubarsoln}
\end{equation}
for $x \in I_1$.

We now show that \eqref{ubarsoln} extends analytically to a
singular function on $\mathbf R$. For this it is enough to show
that $D_1(0,x)$ has at least one zero on $\mathbf R$.

If $C$ is positive, then $\alpha$ is a positive real number.
Recalling that either $V_1$ and $V_2$ are both real or $V_1^\ast =
V_2$, we see that $e^{M(\alpha)}$ is real.  Depending on whether
the right-hand side of \eqref{ems} is positive or negative at
$s=\alpha$, we can choose $M(\alpha)$ to either be real, or to
have imaginary part $\pi/2$.  Also since \eqref{ems} implies
\begin{equation}
M'(s) = \frac{(s+V_1)+(s+V_2)}{(s+V_1)(s+V_2)} -
\frac{(s-V_1)+(s-V_2)}{(s-V_1)(s-V_2)},
\label{mprime}
\end{equation}
$M'(\alpha)$ is real in any case.

If $M(\alpha)$ is real, from \eqref{tildedzx} we see that
$D_1(0,x)
>0$ for $x$ large and positive, and $D_1(0,x)<0$ for $x$ large and
negative. Therefore there must exist at least one $x \in \mathbf
R$ for which $D_1(0,x)=0$.  On the other hand, if
$M(\alpha)=K+i\pi/2$ for $K$ real, then \eqref{tildedzx} gives
\begin{equation*}
  D_1(0,x)=-\frac12\sinh (2\alpha(x-x_0)+2K) -\alpha(x-x_0)-\alpha
M'(\alpha),
\end{equation*}
and again we see that $D_1(0,x)=0$ for some $x \in \mathbf R$.

If $C$ is negative, then $\alpha = i \gamma$ for some positive
number $\gamma$. We see from \eqref{ems} that $e^{M(i\gamma)}$ has
modulus 1, and so $M(i\gamma)$ can be taken to be purely
imaginary: say $M(i\gamma)=i\eta_1$ for $\eta_1 \in \mathbf R$.
From \eqref{mprime} it is readily checked that $M'(i\gamma)^\ast =
M'(i\gamma)$, and hence $M'(i \gamma)$ is real: say
$M'(i\gamma)=\eta_2$ for $\eta_2 \in \mathbf R$.  Then from
\eqref{tildedzx} we have that
\begin{equation}
D_1(0,x) =i \left(\frac12
\sin(2\gamma(x-x_0)+2\eta_1)-\gamma(x-x_0)-\gamma\eta_2\right).
\label{dtildcneg}
\end{equation}
The quantity in parentheses in \eqref{dtildcneg} is a real-valued
function of $x$ which is positive for large negative values of
$x$, and is negative for large positive values of $x$. Therefore
$D_1(0,x)$ must equal zero for some $x \in \mathbf R$.

We have shown, then, that whenever $C_1 = C_2 \ne 0$, $u(x)$ is given on
some open interval by the function on the right hand side of \eqref{ubarsoln}, which however
 cannot be extended
analytically to a function on all of $\mathbf R$.  This contradicts
the fact that $u(x)$ is analytic on $\mathbf R$.   Thus no
nontrivial $H^2$ solutions to \eqref{stathier2} can exist in this case.

\subsection{The degenerate case when $C_1=0$ or $C_2=0$ (but not both)} Here we consider
the cases when either $0=C_1<C_2$ or $C_1<C_2=0$.

Suppose first that $C_1=0$ and $C_2>0$, and let
$\alpha=\sqrt{C_2}$.  From Corollary \ref{u0limueps} we have that
\begin{equation}
u(x)=u(0,\alpha,x)=\lim_{\epsilon \to 0}u(\epsilon,\alpha,x)
\label{uc10lim}
\end{equation}
for $|x-x_0|<\delta_1$.  For $\epsilon$ sufficiently small, we can
take $\beta_1=\epsilon$ and $\beta_2=\alpha$ in Lemma
\ref{gensol}, obtaining
\begin{equation}
u(\epsilon,\alpha,x)=2(D(\epsilon,\alpha,x)'/D(\epsilon,\alpha,x))',
\label{uepsalpha}
\end{equation}
where $D$ is as defined in \eqref{defDst}.   Again, we may assume that $M(s)$ in
\eqref{ems} is defined and
analytic for $s$ in some neighborhood of $\alpha$ and for $s$ in
some neighborhood of $0$.  Note that we can take
$M(0)=0$, so $A(0,x)=0$.

As in \eqref{uepsxd}, we can write, for all $\epsilon >0$,
\begin{equation}
u(\epsilon,\alpha,x)=2(D_2(\epsilon,x)'/D_2(\epsilon,x))',
\label{uepsalphd}
\end{equation}
where
\begin{equation*}
D_2(\epsilon,x)=\frac{D(\epsilon,\alpha,x)}{\epsilon}=\frac{D(\epsilon,\alpha,x)-D(0,\alpha,x)}{\epsilon}
\end{equation*}
and $D(0,\alpha,x)=0$.  Define
\begin{equation*}
D_2(0,x)=\lim_{\epsilon \to 0}D_2(\epsilon,x)=\left.
\frac{\partial }{\partial
\epsilon}\right|_{\epsilon=0}D(\epsilon,\alpha,x).
\end{equation*}
We find by differentiating that
\begin{equation}
D_2(0,x)=\alpha
\cosh(\alpha(x-x_0)+M(\alpha))\left[x-x_0+M'(0)\right]-\sinh(\alpha(x-x_0)+M(\alpha)).
\label{d20x}
\end{equation}
As in Subsection \ref{subsec: doublenonzeroroot}, on any
subinterval of $\{|x-x_0|<\delta_1\}$ where $D_2(0,x)$ is bounded
away from zero, it follows from \eqref{uc10lim} and
\eqref{uepsalphd} that
\begin{equation}
u(x) =u(0,\alpha,x)= 2(D_2(0,x)'/D_2(0,x))'. \label{ud2}
\end{equation}

Now $D_2(0,x)=0$ at any point $x$ for which
\begin{equation}
\alpha(x-x_0+M'(0))=\tanh(\alpha(x-x_0)+M(\alpha)). \label{d2xe0}
\end{equation}  Again as in Subsection \ref{subsec: doublenonzeroroot}, since $\alpha$
is real then $M'(\alpha)$ is real and $M(\alpha)$ can be taken to
either be real or to have imaginary part $\pi/2$.  Since
$\tanh(x+i\pi/2)=\tanh(x)$ for all $x \in \mathbf R$, in either
case \eqref{d2xe0} takes the form
\begin{equation*}
\alpha x-\beta_1 = \tanh(\alpha x -\beta_2),
\end{equation*}
where $\beta_1$ and $\beta_2$ are real numbers, and hence must
have a solution at some point in $\mathbf R$.  It follows then
from \eqref{ud2} that $u$ cannot be analytically continued to all
of $\mathbf R$.

There remains to consider the case when $C_1<0$ and $C_2=0$.  Let
$\alpha = i\beta$, where $\beta>0$ and $\beta^2=|C_1|$. From
Corollary \ref{u0limueps} we have
\begin{equation*}
u(x)=u(\alpha,0,x)=\lim_{\epsilon \to 0} u(\alpha,\epsilon,x),
\end{equation*}
and Lemma \eqref{gensol} gives, for $\epsilon$ sufficiently small,
\begin{equation*}
u(\alpha,\epsilon,x)=2(D(\alpha,\epsilon,x)'/D(\alpha,\epsilon,x))'=2(D(\epsilon,\alpha,x)'/D(\epsilon,\alpha,x))',
\end{equation*}
provided $|x-x_0|<\delta_1$, where $D$ is still given by \eqref{defDst}.  Hence
$u(\alpha,\epsilon,x)$ is still given by the right-hand side of \eqref{uepsalpha}, and the calculations in
\eqref{uepsalphd} to \eqref{d2xe0} apply to $u(\alpha,\epsilon,x)$, the only difference being that
now $\alpha=i\beta$ is purely imaginary.  In this case, as seen in
Subsection \ref{subsec: doublenonzeroroot}, $M(\alpha)$ can be
taken to be purely imaginary, and so \eqref{d2xe0} can be
rewritten in the form
\begin{equation*}
\beta x - \beta_1 = \tan(\beta x - \beta_2),
\end{equation*}
where $\beta_1$ and $\beta_2$ are real numbers.  This equation has
(infinitely many) real solutions, and so again $u$ cannot be
analytically continued to all of $\mathbf R$.

We have thus proved that when either $C_1$ or $C_2$ (but not both)
is zero, then $u$ cannot be analytically continued to $\mathbf R$.
So under the assumption that $u$ is an $H^2$ solution of
\eqref{stathier2}, this case cannot arise.

\subsection{The degenerate case $C_1=C_2=0$}

Finally we consider the case when $C_1$ and $C_2$ are both zero.
Then, by Corollary \eqref{ualphaandbeta},
\begin{equation}
u(x)=u(0,0,x)=\lim_{\alpha \to 0} u(0,\alpha,x), \label{u00x}
\end{equation}
where $u(0,\alpha,x)$ is given for $\alpha > 0$ by \eqref{ud2}
with \eqref{d20x}.  To emphasize the dependence of $D_2(0,x)$ on
$\alpha$, let us denote $D_2(0,x)$ by $D_3(\alpha,x)$ in what
follows.  That is,
\begin{equation*}
D_3(\alpha,x)=\alpha
\cosh(\alpha(x-x_0)+M(\alpha))\left[x-x_0+M'(0)\right]-\sinh(\alpha(x-x_0)+M(\alpha)).
\end{equation*}

The limit in \eqref{u00x} is more singular than those in preceding
sections, because $D_3(\alpha,x)$ has a zero of order three at
$\alpha=0$.  That is, we have
\begin{equation*}
D_3(0,x)=\frac{\partial D_3}{\partial
\alpha}(0,x)=\frac{\partial^2 D_3}{\partial \alpha^2}(0,x) = 0.
\end{equation*}
Therefore, to obtain a formula for $u(0,0,x)$, we should define
\begin{equation*}
D_4(\alpha,x)=\frac{D_3(\alpha,x)}{\alpha^3},
\end{equation*}
and we will have
\begin{equation*}
u(0,0,x)=2(D_4(0,x)'/D_4(0,x))',
\end{equation*}
where
\begin{equation}
D_4(0,x)=\lim_{\alpha \to 0}D_4(\alpha,x)=\frac16
\left.\frac{\partial^3}{\partial \alpha^3}\right|_{\alpha =
0}D_3(\alpha,x). \label{d40x}
\end{equation}
An elementary but fairly tedious computation of the derivative in
\eqref{d40x} shows that
\begin{equation*}
D_4(0,x)=\frac13 (x-x_0 + M'(0))^3 -\frac16 M'''(0),
\end{equation*}
and as $M'(s)$ is real for all real $s$ by \eqref{mprime}, we have
that $M'(0)$ and $M'''(0)$ are real.  Clearly then $D_4(0,x)$ has
a zero at some $x \in \mathbf R$, and so $u(x)=u(0,0,x)$ cannot be
extended to an analytic function on $\mathbf R$.

To summarize, we have now shown that in all the degenerate cases,
when $C_1$ or $C_2$ are zero or when $C_1=C_2$, \eqref{stathier2}
cannot have an $H^2$ solution on $\mathbf R$; and in the
nondegenerate case the only possible solutions are given by
\eqref{desiredu}. This then completes the proof of Theorem
\ref{2solthm}.
\end{proof}

\section{The stationary equation for general $N$} \label{sec:Nsolconverse}

We conclude with a few comments as to how the results above may be
generalized to arbitary stationary equations of the KdV hierarchy.
In view of the first remark following Definition \ref{defNsol}, it
is natural to conjecture the following generalization of Theorems
\ref{1solthm} and \ref{2solthm}:   if $u \in H^{2N-2}$ is a
nontrivial distribution solution of the stationary equation
\begin{equation*}
d_3R_3 + d_5 R_5 + d_7 R_7 + \dots + d_{2N+3}R_{2N+3}=0,
\end{equation*}
then $u$ must be a $k$-soliton profile for the KdV hierarchy,
 for some $k \in \{1,2,\dots,N\}$.  More precisely, there must
 exist real numbers $\alpha_j$ and $a_j$, with $(-1)^{j-1} a_j > 0$ for
 $j=1,\dots,k$, such that
\begin{equation*}
u(x)=\psi^{(k)}(x;a_1,\dots,a_k;\alpha_1,\dots,\alpha_k).
\end{equation*}



Much of the proof given above for Theorem 5.2 generalizes
immediately to arbitrary $N$.   The extension of Lemma
\ref{regularity2} to arbitrary $N$, with $H^2$ replaced by
$H^{2N-2}$, is obvious.  Let $C_1,\dots C_N$ be the roots of the
equation
\begin{equation}
d_{2N+3}z^N - d_{2N+1}z^{N-1} + d_{2N-1}z^{N-2} - \dots  \pm d_3,
\end{equation}
and let $\alpha_j$ be the square roots of the $C_j$, suitably
defined.  The generalization to arbitary $N$ of the definition of
$\hat R(x,\zeta)$ is already given in \cite{Di}, along with the
proof that in the case when $\hat R(x,\zeta)$  has distinct roots
$\zeta_1,\dots,\zeta_N$ at some $x_0$, they satisfy an analogue of
the system \eqref{sys2}.

Using induction and an argument like that given above in Section
\ref{sec:2solconverse}, we can assume that the $\zeta_j$ are in
fact distinct, since otherwise \eqref{sys2} reduces to a system
with a smaller value of $N$. From \eqref{sys2} one then obtains a
generalization of the system \eqref{visyst} for functions $v_j$
which are suitably defined square roots of the functions
$-\zeta_j$.  Lemma \ref{impfnv}, Corollary \ref{ualphaandbeta},
and Lemma \ref{gensol} all generalize straightforwardly to
arbitrary $N$. We thus obtain that any solution $u(x)$ of
\eqref{stathier} is given by
\begin{equation}
u(x)=u(\alpha_1,\dots,\alpha_n,x)=\lim_{\beta_1 \to
\alpha_1,\dots,\beta_N \to
\alpha_N}\psi^{(N)}(x;a_1,\dots,a_N;\beta_1,\dots,\beta_N),
\label{ugenNaslim}
\end{equation}
for some complex numbers $a_1,\dots,a_N$, where the limit is taken
through values of $(\beta_1,\dots,\beta_N)$ such that the
$\beta_j$ are distinct and all non-zero.

To complete the proof of the conjectured general result, then, it
would remain to do two things.  First, establish an analogue of
Lemma \ref{uis2sol}, or in other words establish the conjecture
mentioned in the first remark after Definition \ref{defNsol}; and
second, show that in the degenerate cases when some of the $C_j$
coincide or are equal to zero, the functions
$\psi^{(N)}(x;a_1,\dots,a_N;\beta_1,\dots,\beta_N)$ in
\eqref{ugenNaslim} converge to a limit which cannot be
analytically continued to all of $\mathbf R$.

\section{Acknowledgements} \label{sec:ack}

We are deeply indebted to the late Leonid Dickey for the many hours he spent introducing us to his beautiful approach to soliton theory.  We would also like to thank Bernard Deconinck and an anonymous referee for helpful comments.

\end{document}